\theoremstyle{theorem}
\newtheorem{thm}{Theorem}
\newtheorem{prop}[thm]{Proposition}
\newtheorem{lemma}[thm]{Lemma}
\newtheorem{cor}[thm]{Corollary}
\newtheorem{question}[thm]{Question}
\theoremstyle{definition}
\newtheorem{defi}[thm]{Definition}
\newtheorem{rem}[thm]{Remark}
\newtheorem{ex}[thm]{Example}
\theoremstyle{plain}
\newcommand{\N}{\mathbb N} 
\newcommand{\Z}{\mathbb Z} 
\newcommand{\R}{\mathbb R} 
\newcommand{\C}{\mathbb C} 
\newcommand{\F}{\mathbb F}
\newcommand{\D}{\mathcal D}
\DeclareMathOperator{\ran}{rank}
\DeclareMathOperator{\Range}{Range}
\DeclareMathOperator{\Coker}{Coker}
\DeclareMathOperator{\supp}{supp}
\DeclareMathOperator{\id}{id}
\DeclareMathOperator{\ou}{Ou}
\DeclareMathOperator{\area}{Area}
\DeclareMathOperator{\image}{Image}
\DeclareMathOperator{\Symp}{Symp}
\DeclareMathOperator{\Graph}{graph}
\DeclareMathOperator{\Crit}{Crit}
\title{Topology of (small) Lagrangian cobordisms}
\author{Mads R. Bisgaard}
\thanks{In \cite{Bisgaard16} this paper was referred to under the name \emph{Classical topology of Lagrangian cobordisms}}
\address{Department of Mathematics, ETH Z{\"u}rich, R{\"a}mistrasse 101\\
8092 Z{\"u}rich,
Switzerland\\
\emph{\href{mailto:mads.bisgaard@math.ethz.ch}{mads.bisgaard@math.ethz.ch}}}
\begin{document}
\maketitle 
\begin{abstract}
We study the following quantitative phenomenon in symplectic topology: In many situations, if a Lagrangian cobordism is sufficiently \emph{small} (in a sense specified below) then its topology is to a large extend determined by its boundary. This principle allows us to derive several homological uniqueness results for small Lagrangian cobordisms. In particular, under the smallness assumption, we prove homological uniqueness of the class of Lagrangian cobordisms which, by Biran-Cornea's Lagrangian cobordism theory, induces operations on a version of the derived Fukaya category. We also establish a link between our results and Vassilyev's theory of Lagrange characteristic classes. Most currently known constructions of Lagrangian cobordisms yield \emph{small} Lagrangian cobordisms in many examples.  
\end{abstract}
\section{Introduction}
In this paper we study the topology of Lagrangian cobordisms connecting Lagrangian submanifolds of a symplectic manifold $(M^{2n},\omega)$. The idea of relating Lagrangian submanifolds by Lagrangian cobordisms was first conceived by Arnol'd \cite{Arnold80}. The idea has recently received a lot of attention, in part due to Biran-Cornea's work \cite{BiranCornea13}, \cite{BiranCornea14}. They showed (among many other things) that Lagrangian cobordisms provide a geometric realization of operations in a suitable version of the (derived) Fukaya category. They further showed that examples of such cobordisms arise as the trace of Lagrange surgery. It is therefore of interest to understand if all such cobordisms come from Lagrange surgery. More generally there are by now a few explicit constructions available for producing Lagrangian cobordisms. However, the topological and geometric nature imposed on a cobordism by requiring it admit a Lagrangian embedding into $\R^2 \times M$ remain rather mysterious. The present paper aims at exploring this nature. Some of the questions we attempt to answer are the following: How different can the topology of Lagrangian cobordant Lagrangians be? Does Lagrange surgery of two Lagrangians always give rise to a Lagrangian "trace of surgery"-cobordism? Is there a quantitative way to detect if a cobordism "originates" from Lagrange surgery?

\subsubsection*{Setting and notation}
$(M^{2n},\omega)$ will be assumed either closed or convex at infinity \cite{GromovEliashberg91}. Our Lagrangian cobordisms live in $\tilde{M}:=\R^2(x,y) \times M$ equipped with the symplectic structure $\tilde{\omega}:=\omega_{\R^2}\oplus \omega$, where $\omega_{\R^2}:=dx\wedge dy$. We denote by $\mathcal{L}=\mathcal{L}(M,\omega)$ the space of all closed, connected Lagrangian submanifolds of $(M,\omega)$. A Lagrangian cobordism $V\subset (\tilde{M},\tilde{\omega})$ relating two  ordered tuples $(L_i)_{i=1}^m, (L'_i)_{i=1}^{m'} \subset \mathcal{L}$ will always be assumed connected and is symbolically written
\[
V: (L'_i)_{i} \rightsquigarrow (L_j)_{j}.
\]
Viewing $V$ as an abstract cobordism its boundary $\partial V$ has a positive part and a negative part: $\partial_-V \approx \sqcup_{j=1}^mL_j$, $\partial_+V \approx \sqcup_{j=1}^{m'}L'_{j}$. When $V$ is oriented the $L_i$ and $L'_j$ inherit an orientation via the convention $\partial V=-\partial_- V \sqcup \partial_+ V$  (see Section \ref{seclagcob}).

\section{Main results}

Our first result is a cobordism version of the classical adjunction formula for Lagrangian submanifolds. Given oriented $L,L'\in \mathcal{L}$ we denote by $I(L,L')$ the intersection index of $(L,L')$ computed with respect to the orientation $\omega^n$ on $M$. 
\begin{thm}
\label{adcobthm}
Let $V: (L'_i)_{i}^{m'} \rightsquigarrow (L_j)_{j}^{m}$ be an oriented Lagrangian cobordism between two oriented ordered tuples $(L_i)_{i=1}^m, (L'_i)_{i=1}^{m'} \subset \mathcal{L}$. Then 
\begin{equation}
\label{eq0}
(-1)^{\tfrac{(n+1)n}{2}}\chi(V,\partial_- V)= \sum_{1\leq i<j\leq m} I(L_i,L_j) -\sum_{1\leq i<j\leq m'} I(L'_i,L'_j).
\end{equation}
In the non-oriented case this formula holds true modulo 2.
\end{thm}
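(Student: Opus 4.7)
I plan to compute the symplectic intersection number $V\cdot V_\epsilon$, where $V_\epsilon := V + (0,\epsilon)$ is the translate of $V$ in the $y$-direction of $\R^2$, in two independent ways. Because $V$ is Lagrangian and $\iota_{\partial_y}\tilde\omega = dx$, the translation field $\partial_y$ corresponds, via the Weinstein neighborhood identification $N_V \cong T^*V$, to the 1-form $dx|_V$ on $V$. After a compactly-supported Hamiltonian perturbation I may assume that the end-heights $y_1 > \cdots > y_m$ and $y'_1 > \cdots > y'_{m'}$ are pairwise distinct and that $f := x|_V$ is Morse. For $\epsilon > 0$ smaller than the minimal end-spacing, the cylindrical ends of $V$ and $V_\epsilon$ are disjoint, so $V \cap V_\epsilon$ is compact, transverse, and in bijection with $\Crit(f)$. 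Since $f$ is proper with $f\to\mp\infty$ on $\partial_\mp V$, its Morse complex computes $H_*(V,\partial_-V)$; a local Darboux sign computation at each critical point $p$ — combining the Hessian sign $(-1)^{\mathrm{ind}(p)}$ with the shuffle sign $(-1)^{(n+1)(n+2)/2}$ relating the natural basis of $T_pV\oplus N_pV$ to the symplectic orientation of $\tilde M$ — yields
\begin{equation*}
V\cdot V_\epsilon \;=\; (-1)^{(n+1)(n+2)/2}\,\chi(V,\partial_-V).
\end{equation*}

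On the other hand, because $V$ is bounded in the $y$-coordinate, $V\cdot V_\epsilon = 0$ once $\epsilon$ exceeds the $y$-diameter of $V$. The signed count is locally constant in $\epsilon$ except at the finite set of critical parameters
\[
\{y_i - y_j : 1 \le i < j \le m\}\ \cup\ \{y'_i - y'_{j'} : 1 \le i < j' \le m'\},
\]
at which a negative (resp.\ positive) cylindrical end of $V_\epsilon$ becomes cohorizontal with one of $V$; interior birth/death events cancel in signed count and contribute nothing. A local model at the attachment region between the cylindrical ends and the compact middle of $V$ shows that for each transverse $q \in L_i\cap L_j$ precisely one intersection point of $V\cap V_\epsilon$ is created or destroyed as $\epsilon$ crosses $y_i - y_j$, with a definite local sign. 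The boundary convention $\partial V = -\partial_-V\sqcup\partial_+V$ forces the aggregate jumps across negative- and positive-end crossings to carry opposite global signs, and summing all such jumps between $\epsilon = 0^+$ and $\epsilon = +\infty$ yields
\[
(-1)^{(n+1)(n+2)/2}\,\chi(V,\partial_-V)\;=\;(-1)^{n+1}\Bigl(\sum_{1\le i<j\le m}I(L_i, L_j)\;-\;\sum_{1\le i<j\le m'}I(L'_i, L'_{j})\Bigr).
\]

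Now \eqref{eq0} follows from the identity $(-1)^{(n+1)(n+2)/2} = (-1)^{n+1}(-1)^{n(n+1)/2}$ upon dividing through by $(-1)^{n+1}$, and the non-oriented statement is obtained by reading the same argument modulo $2$. The hardest step will be the local sign analysis at the end-collisions: verifying that a negative-end crossing contributes $(-1)^{n+1}I(L_i,L_j)$ while a positive-end crossing contributes $-(-1)^{n+1}I(L'_i,L'_{j})$, which requires writing the orientations on $V$ near both kinds of cylindrical ends in Darboux coordinates and carefully invoking the convention $\partial V = -\partial_-V\sqcup\partial_+V$ — this is precisely what selects the ordered pairs $i<j$ on the right-hand side rather than all pairs.
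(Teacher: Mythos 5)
Your proposal is correct, and its first half is the same as the paper's: both place $V$ in a Weinstein neighbourhood, push it off by (the Hamiltonian flow of an extension of) the $x$-coordinate, identify the resulting compact intersection with $\Crit(x|_V)$, and convert the signed count into $\pm\chi(V,\partial_\pm V)$; your sign $(-1)^{(n+1)(n+2)/2}$ is consistent with the paper's $(-1)^{n(n+1)/2}$ after the substitution $\chi(V,\partial_-V)=(-1)^{n+1}\chi(V,\partial_+V)$, and your final bookkeeping is right.

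Where you genuinely differ is the second evaluation of the intersection number. The paper applies a \emph{single} isotopy generated by $-\eta\,\partial_y$ with $\eta$ a large constant on $[-R,R]$ tapering to zero outside $[-R-2,R+2]$: this drops the middle of the push-off below all the ends while fixing the ends at infinity, so every surviving intersection lies in the taper region, where both Lagrangians are exactly of product form $I\times\{a\}\times L$ and the local index factors transparently as an $\R^2$-contribution times $I_p(L_i,L_j)$. You instead translate all of $V$ and sum the wall-crossing jumps of $V\cdot V_\epsilon$ as $\epsilon$ runs from $0^+$ to $\infty$. This works, but the delicate point is not quite where you place it: at a critical value $\epsilon_0=y_i-y_j$ the two cylindrical ends coincide along an entire noncompact ray, so the jump cannot be read off from a compact local model at the attachment region alone (the vanishing intersection points converge to the junction between the cylinder and the compact part, whose geometry is not universal). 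The clean way to extract the jump in your setup is an excision/truncation argument: cut at a wall $\{x=\mp T\}$ deep in the cylindrical region, note that the parametrized intersection locus meets this wall precisely in $\{\epsilon_0\}\times\{(\mp T,y_i)\}\times(L_i\cap L_j)$, and compute the signed count of these boundary points by a genuine product computation, giving $\pm I(L_i,L_j)$ per crossing. With that step supplied — or by simply adopting the paper's cut-off isotopy, which confines all crossings to an exactly cylindrical region — your argument closes up. What your version buys is a transparent "count is zero at $\epsilon=\infty$" endpoint; what the paper's buys is that no analysis at infinity is ever needed.
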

The next result is in some sense the Floer-homological version of Theorem \ref{adcobthm}. It should be thought of as a relative version of the main result in Chekanov's beautiful paper \cite{Chekanov98}. To state it, we will say that a tuple $(L_i)_{i=1}^m\subset \mathcal{L}$ is \emph{transverse} if $L_i \pitchfork L_j$ for every $i\neq j$.

\begin{thm}
	\label{relChekanov}
	Let $(L'_i)_{i=1}^{m'},(L_j)_{j=1}^m \subset \mathcal{L}$ be two transverse tuples and let $V: (L'_i)_i \rightsquigarrow (L_j)_j$ be a small Lagrangian cobordism. If $V$ is spin then 
	\begin{equation}
	\label{eq3}
	\dim_{\F}\! H_*(V,\partial_{-}V; \F)\leq \sum_{1\leq i<j\leq m}\# (L_i\cap L_j)+\sum_{1\leq i<j\leq m'}\# ( L'_{i}\cap L'_{j}).
	\end{equation}
	for every field $\F$. If $V$ is not spin then (\ref{eq3}) still holds with $\F=\Z_2$.
\end{thm}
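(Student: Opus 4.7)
The strategy is to transplant Chekanov's low-energy Floer-theoretic argument from \cite{Chekanov98} into the cobordism setting, using Biran-Cornea's Floer theory for Lagrangian cobordisms \cite{BiranCornea13}. Concretely, I would construct a Floer chain complex $CF(V;H)$ attached to $V$ and a carefully chosen \emph{profile Hamiltonian} $H\colon \tilde{M}\to\R$ whose time-one flow has two effects. Near each cylindrical end the flow resolves the parallel strands $L_1,\dots,L_m$ (respectively $L'_1,\dots,L'_{m'}$) into transverse pairs, producing exactly the intersections $\bigsqcup_{i<j}L_i\cap L_j$ and $\bigsqcup_{i<j}L'_i\cap L'_j$. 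On the compact part, the smallness of $V$ is used to further perturb $H$ so that $\phi_H(V)\cap V$ has no interior points. Consequently the generating set of $CF(V;H)$ is in bijection with $\bigsqcup_{i<j}(L_i\cap L_j)\sqcup\bigsqcup_{i<j}(L'_i\cap L'_j)$, and its cardinality equals the right-hand side of \eqref{eq3}.

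Next, I would establish the isomorphism $HF(V;H;\F)\cong H_*(V,\partial_-V;\F)$ under the smallness hypothesis. Following Chekanov's action-filtration scheme, the smallness of $V$ bounds the symplectic action of all generators from above, while any non-constant pseudo-holomorphic strip contributing to the Floer differential must enclose area bounded below by a disk-area invariant of the ambient Lagrangians. For $V$ sufficiently small these two bounds are incompatible, so the Floer differential becomes trivial on low-action generators and reduces on the associated graded to the PSS/Morse differential computing $H_*(V,\partial_-V;\F)$. The spin hypothesis is used to orient the relevant moduli spaces coherently, thereby allowing arbitrary field coefficients; over $\Z_2$ orientations are unnecessary and the same argument applies without change.

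Combining these two steps gives
\[
\dim_{\F} H_*(V,\partial_-V;\F)\;=\;\dim_{\F} HF(V;H;\F)\;\leq\;\dim_{\F} CF(V;H;\F),
\]
and the right-hand side is exactly the bound claimed in \eqref{eq3}. The principal obstacle is the second step: the Chekanov-type isomorphism in the cobordism setting requires careful tracking of the action filtration at the cylindrical ends, ensuring that the profile Hamiltonian does not introduce spurious low-action strips and that standard disk-bubbling analysis goes through uniformly near the non-compact ends. The precise quantitative meaning of \emph{small} (as fixed earlier in the paper) is designed so that the action window of the Floer complex fits inside the gap provided by the disk-area threshold, forcing the associated low-energy spectral sequence to collapse at $E^2$ onto $H_*(V,\partial_-V;\F)$.
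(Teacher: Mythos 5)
Your Step 1 — constructing a displacing Floer complex whose generating set matches the right-hand side of \eqref{eq3} — is aligned with the paper: there one takes a Hamiltonian $G = F + \tilde{H}$ where $F$ is a Morse-type profile and $\tilde{H}$ is compactly supported with Hofer norm $\lesssim \mathcal{S}(V)$, and one checks $\ran_\Lambda CF(G:V;\F) = \sum_{i<j}\#(L_i\cap L_j) + \sum_{i<j}\#(L'_i\cap L'_j)$.

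The gap is in your Step 2. You assert the isomorphism $HF(V;H;\F)\cong H_*(V,\partial_-V;\F)$ and justify it by an ``action-filtration'' argument, but that argument as written is not coherent: the complex $CF(V;H)$ has its generators at the end intersections, not at Morse critical points of $V$, so there is no sense in which its associated graded ``reduces to the PSS/Morse differential computing $H_*(V,\partial_-V;\F)$.'' Moreover the claim that ``any non-constant pseudo-holomorphic strip contributing to the Floer differential must enclose area bounded below'' by the bubbling threshold is false — Floer strips can have arbitrarily small positive energy; it is only bubbles (disks and spheres) whose energy is bounded below by $A(\tilde{M},V)$. To actually relate $CF(V;H)$ to $H_*(V,\partial_\pm V;\F)$ one would need a continuation argument against a small Morse-type complex $CF(V;F)$, and this is exactly where the naive approach breaks: the continuation strips carry energy $\leq \|\tilde{H}\| + (\text{action gap})$, and since $\|\tilde{H}\|$ is already comparable to $\mathcal{S}(V)$, strips with large action gap can and do exceed $A(\tilde{M},V)$ and bubble. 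Thus the continuation counts $\Phi,\Psi$ are \emph{not} chain maps, and the isomorphism you want is not available; in fact the theorem is formulated as an inequality precisely because an isomorphism at the level of Floer homology is not being claimed.

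The missing ingredient, which is the heart of the paper's argument, is Chekanov's $\lambda$-homotopy lemma (Lemma~\ref{lemchek}). One keeps $\Phi$, $\Psi$ and a candidate homotopy $h$ only as \emph{energy-truncated} $\Lambda$-linear maps — their matrix elements are declared $0$ whenever the length $l(\widetilde{\gamma}_-,\widetilde{\gamma}_+)$ exceeds the allowed window, which is the only regime where the moduli spaces are compact (energy $\leq \Delta = \mathcal{S}(V)+2\delta < A(\tilde{M},V)$). These truncated maps do not satisfy the chain-map or chain-homotopy equations, but they do satisfy the weaker relation $P_-(\id - \Psi\Phi - h\partial - \partial h)P_+ = 0$, and Chekanov's lemma converts exactly this weaker relation into the dimension bound $\dim_\F H(CF_0(F:V;\F),\partial) \leq \ran_\Lambda CF(G:V;\F)$. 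Combined with $HF_0(F:V;\F)\cong H_*(V,\partial_+ V;\F)$ (proved by a PSS-type argument for the \emph{small} Hamiltonian $F$, which does not suffer the bubbling problem) and Poincar\'e--Lefschetz duality (Remark~\ref{rem1}) to pass between $\partial_+V$ and $\partial_-V$, this yields \eqref{eq3}. Without some version of Chekanov's algebraic trick, your proposal has no mechanism to compare the two chain complexes across the action scale $\mathcal{S}(V)$.
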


Let us explain the meaning of the word \emph{small} in the assumptions of this result. Denote by $A(\tilde{M},V)>0$ the \emph{bubbling threshold} of $V$. $A(\tilde{M},V)$ can intuitively be thought of as the area of the smallest non-constant holomorphic disk on $V$. For closed Lagrangian submanifolds this quantity was introduced by Chekanov \cite{Chekanov98}, but his definition easily generalizes to Lagrangian cobordisms (see Section \ref{seclagcob}). Recently Cornea and Shelukhin \cite{CorneaShelukhin15} introduced another non-negative quantity associated to $V$ - namely the so-called shadow of $V$, denoted by $\mathcal{S}(V)$. One can think of $\mathcal{S}(V)$ as measuring the "size" of the projection of $V$ to the $\R^2$-plane (see Section \ref{seclagcob}). In particular $\mathcal{S}(V)$ depends in a strong way on the embedding $V \hookrightarrow \tilde{M}$.  
\begin{defi}
	\label{def1}
	We say that a Lagrangian cobordism $V: (L'_i)_{i=1}^{m'} \rightsquigarrow (L_i)_{i=1}^m$ is \emph{small} if 
	\begin{equation}
	\label{eq35}
	\mathcal{S}(V)<A(\tilde{M},V).
	\end{equation}
\end{defi}

\begin{rem}
	A main novelty of Definition \ref{def1} is that it imposes no topological restrictions on $\partial V$. In fact every $L\in \mathcal{L}$ is the boundary component of a small Lagrangian cobordism (e.g. the trivial cobordism $\R \times L\subset (\tilde{M},\tilde{\omega})$). Moreover, most known constructions of Lagrangian cobordisms yield small cobordisms in many examples.
\end{rem}
\begin{rem}
	\label{rem1}
	Recall that if $(V^{n+1},\partial_+ V,\partial_- V)$ is a compact orientable cobordism and $\F$ denotes a field then Poincar\'e -Lefschetz duality gives $\F$-vector space isomorphisms 
	\[
	H_k(V,\partial_+V;\F)\cong H_{n+1-k}(V,\partial_-V;\F) \quad \forall \ k\in \Z.
	\]
	Of course, whether orientable or not, this always holds with $\F=\Z_2$. In particular we see that any compact cobordism $(V^{n+1},\partial_+ V,\partial_- V)$ satisfies $\chi(V,\partial_- V)=(-1)^{n+1}\chi(V,\partial_+ V)$.
\end{rem}

\subsection{Applications to elementary Lagrangian cobordisms}
\label{app1}
Theorems \ref{adcobthm} and \ref{relChekanov} are easiest to apply to \emph{elementary} Lagrangian cobordisms, i.e. Lagrangian cobordisms $V:L'\rightsquigarrow L$ which have just one negative and one positive end. For such $V$ the right-hand side of (\ref{eq3}) equals $0$. The following results all follow directly from this fact. For detailed proofs we refer to Section \ref{secproof}.

\begin{thm}
\label{cor2}
Let $L,L' \in \mathcal{L}$ and suppose at least one of them is spin. If $V: L'\rightsquigarrow L$ is a small Lagrangian cobordism then the inclusions $L,L'\hookrightarrow V$ induce isomorphisms on singular (co)homology. In particular, if there exists a small Lagrangian cobordism $V: L'\rightsquigarrow L$, then $H_*(L;\Z)\cong H_*(L';\Z)$ as graded groups and $H^*(L;\Z)\cong H^*(L';\Z)$ as graded rings.
If neither $L$ nor $L'$ is spin then the same result holds for homology with coefficients in $\Z_2$. 
\end{thm}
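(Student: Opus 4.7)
The plan is to reduce to Theorem~\ref{relChekanov} applied to the elementary case $m=m'=1$. In this situation the transversality hypothesis is vacuous (no distinct pairs inside a one-element tuple) and both sums on the right hand side of~(\ref{eq3}) are empty, so the inequality forces $H_*(V,\partial_-V;\F)=0$ whenever the theorem applies. The strategy is to run this first with $\F=\Z_2$, then bootstrap spin-ness of $V$ itself, and finally cover all fields and the integers.

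First apply Theorem~\ref{relChekanov} with $\F=\Z_2$; no spin assumption on $V$ is needed in that case. One gets $H_*(V,\partial_-V;\Z_2)=0$, and the unconditional $\Z_2$-Poincar\'e-Lefschetz duality recalled in Remark~\ref{rem1} then yields $H_*(V,\partial_+V;\Z_2)=0$ as well. The long exact sequences of the pairs $(V,L)$ and $(V,L')$ now force the inclusions $L,L'\hookrightarrow V$ to induce isomorphisms on $H_*(-;\Z_2)$ and, by the universal coefficient theorem, also on $H^*(-;\Z_2)$.

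Next I upgrade the hypothesis using the spin assumption. Say without loss of generality that $L$ is spin. Because $L$ is a boundary component of $V$ its normal line bundle in $V$ is trivial, so $TV|_L\cong TL\oplus \R$ and hence $w_i(V)|_L=w_i(L)$ for $i=1,2$. Both restrictions vanish since $L$ is orientable and spin, and the $\Z_2$-cohomology isomorphism from the previous paragraph forces $w_1(V)=w_2(V)=0$. Thus $V$ is itself orientable and spin, so Theorem~\ref{relChekanov} applies over every field $\F$ and gives $H_*(V,\partial_-V;\F)=0$. The orientable form of Poincar\'e-Lefschetz duality then yields $H_*(V,\partial_+V;\F)=0$ for every $\F$, and a universal coefficient argument produces $H_*(V,\partial_\pm V;\Z)=0$. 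The long exact sequences of the pairs yield integral (co)homology isomorphisms under the inclusions $L,L'\hookrightarrow V$; naturality of the cup product upgrades the cohomology isomorphisms to graded ring maps, so $H^*(L;\Z)\cong H^*(V;\Z)\cong H^*(L';\Z)$ as graded rings.

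The main obstacle is precisely this bootstrap: Theorem~\ref{relChekanov} supplies information over a general field only after $V$ is known to be spin, whereas the corollary assumes spin-ness of just one boundary component. The detour through the $\Z_2$-cohomology isomorphism, which is what allows Stiefel--Whitney classes to be transported across the boundary, is the step that makes the argument close up.
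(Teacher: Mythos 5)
Your proof is correct and takes essentially the same route as the paper: first run Theorem~\ref{relChekanov} over $\Z_2$ to get $\Z_2$-(co)homology isomorphisms from the long exact sequences and Poincar\'e--Lefschetz duality, then bootstrap spin-ness of $V$ by restricting Stiefel--Whitney classes to the spin end (this is exactly the content of Remark~\ref{rem2}), and finally apply the theorem over all fields and use the universal coefficient theorem to land on $\Z$-coefficients. Your extra details (triviality of the normal line bundle of the boundary and naturality of the cup product for the ring statement) are consistent with, and merely spell out, the paper's argument.
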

The following result is very much in the spirit of Chekanov's original result \cite{Chekanov98}. One can interpret it as saying that one cannot (geometrically) displace a Lagrangian by a small cobordism.
\begin{cor}
\label{cor1}
Let $L,L'\in \mathcal{L}_1$ and suppose at least one of them is spin. If there exists a small Lagrangian cobordism $V: L'\rightsquigarrow L$ then $L\cap L'\neq \emptyset$. Moreover, if $L \pitchfork L'$ then 
\begin{equation}
\label{eq33}
\dim_{\F}H_*(L;\F)\leq \#(L\cap L').
\end{equation}
for every field $\F$. Of course, if neither $L$ nor $L'$ is spin then the same conclusion holds for $\F=\Z_2$.
\end{cor}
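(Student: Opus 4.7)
The plan is to reduce Corollary \ref{cor1} to Theorem \ref{relChekanov} by a \emph{bending} construction that moves both boundary components of $V$ onto the negative end, combined with Poincar\'e--Lefschetz duality (Remark \ref{rem1}) and Theorem \ref{cor2}.

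First I would observe that the qualitative claim $L\cap L'\neq\emptyset$ is a formal consequence of the quantitative bound (\ref{eq33}): if $L\cap L'=\emptyset$ then $L\pitchfork L'$ holds vacuously with $\#(L\cap L')=0$, and (\ref{eq33}) would then force $H_0(L;\F)=0$, contradicting that $L$ is a non-empty closed manifold. So it suffices to prove (\ref{eq33}) under the transversality assumption.

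Assuming $L\pitchfork L'$, the key step is to perform a $C^0$-thin \emph{U-turn} of the positive cylindrical end of $V$ in the $\R^2$-factor, producing a new Lagrangian $\tilde V:\emptyset \rightsquigarrow (L,L')$ that is diffeomorphic to $V$ and whose whole abstract boundary now lies on the negative end. Since the deformation is localised near the positive end and does not alter the underlying smooth manifold, $\tilde V$ is spin whenever $V$ is; and by choosing the U-turn sufficiently thin in $\R^2$ both $\mathcal S(\tilde V)$ and $A(\tilde M,\tilde V)$ differ from those of $V$ by arbitrarily little, so the strict inequality (\ref{eq35}) persists and $\tilde V$ is still small. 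Theorem \ref{relChekanov} applied to $\tilde V$ (with $m'=0,\,m=2$; the tuple $(L,L')$ is transverse by hypothesis and the empty tuple vacuously so) then gives
\[
\dim_\F H_*(\tilde V,\partial_-\tilde V;\F)\leq \#(L\cap L').
\]
Since $\partial_+\tilde V=\emptyset$, Remark \ref{rem1} yields $H_k(\tilde V,\partial_-\tilde V;\F)\cong H^{n+1-k}(\tilde V;\F)$; combining $\tilde V\approx V$ with Theorem \ref{cor2} gives $\dim_\F H_*(\tilde V;\F)=\dim_\F H_*(L;\F)$, so summing over degrees produces (\ref{eq33}).

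The main technical obstacle is realising the U-turn as an \emph{embedded} Lagrangian deformation whose shadow grows by at most an arbitrarily small amount and whose bubbling threshold drops by at most an arbitrarily small amount, so that the strict smallness (\ref{eq35}) inherited from $V$ persists for $\tilde V$. Such U-turns are standard in Biran--Cornea's Lagrangian cobordism theory, but the quantitative control of $\mathcal S(\tilde V)$ and $A(\tilde M,\tilde V)$ must be made precise in this setting; once that is in place, the remainder of the argument is purely homological and runs as indicated above.
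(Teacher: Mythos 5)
Your proposal matches the paper's proof essentially step for step: bend the positive end of $V$ to obtain a small Lagrangian null-cobordism $V'\colon\emptyset\rightsquigarrow(L,L')$ (the paper likewise asserts, without detail, that "this bending can be done in such a way that $V'$ again is small"), then apply Theorem \ref{relChekanov} with $m=2$, $m'=0$ and use Poincar\'e--Lefschetz duality together with $H_*(V';\F)\cong H_*(V;\F)\cong H_*(L;\F)$ from Theorem \ref{cor2} to obtain the bound. Your observation that $L\cap L'\neq\emptyset$ follows formally from (\ref{eq33}) is the same reasoning the paper leaves implicit.
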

\begin{cor}
\label{cor0}
No oriented $L\in \mathcal{L}$ with $\chi(L)\neq 0$ admits an oriented Lagrangian null-cobordism. Similarly, no $L\in \mathcal{L}$ admits a small Lagrangian null-cobordism.
\end{cor}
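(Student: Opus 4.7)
My plan is to reduce both assertions to direct specializations of Theorems \ref{adcobthm} and \ref{relChekanov}, applied to a null-cobordism of $L$ --- i.e. a cobordism in which one end is $L$ and the other is empty. The structural observation I would rely on is that whether one writes such a null-cobordism as $V: \emptyset \rightsquigarrow L$ or as $V: L \rightsquigarrow \emptyset$, the indexing sets for the pairs $1\leq i<j$ on the right-hand sides of (\ref{eq0}) and (\ref{eq3}) are either empty or singletons, so all those sums vanish identically. Moreover, the empty tuple is vacuously transverse, which is what permits the application of Theorem \ref{relChekanov}.

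For the first statement I would assume $V$ is an oriented null-cobordism and, without loss of generality, take $V: \emptyset \rightsquigarrow L$, so that $\partial_- V = L$ and $\partial_+ V = \emptyset$. Theorem \ref{adcobthm} then yields $\chi(V, L) = 0$. Combining with Remark \ref{rem1}, which gives $\chi(V, L) = (-1)^{n+1}\chi(V, \emptyset) = (-1)^{n+1}\chi(V)$, I obtain $\chi(V) = 0$; the long exact sequence of the pair $(V, L)$ then forces $\chi(L) = \chi(V) - \chi(V, L) = 0$, contradicting $\chi(L)\neq 0$.

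For the second statement I would apply Theorem \ref{relChekanov} (with $\F = \Z_2$ whenever $V$ fails to be spin) to a small null-cobordism $V$ of $L$, obtaining $H_*(V, \partial_- V; \Z_2) = 0$. In the orientation $V: \emptyset \rightsquigarrow L$, the $\Z_2$-version of Poincar\'e--Lefschetz duality recalled in Remark \ref{rem1} identifies $H_{n+1}(V, L; \Z_2)$ with $H_0(V; \Z_2) = \Z_2$ (by connectedness of $V$), contradicting the vanishing just obtained. In the other orientation the contradiction is even more immediate: $H_*(V; \Z_2) = 0$ is incompatible with the connectedness of $V$.

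The main obstacle should be essentially cosmetic --- beyond confirming that the empty tuple is a valid input to Theorem \ref{relChekanov} (which it is, vacuously) and that Remark \ref{rem1} applies over $\Z_2$ without any orientability hypothesis, the proof is a direct spot-check of the two main theorems in a degenerate case.
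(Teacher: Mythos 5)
Your proof is correct, and for the second assertion it is essentially the paper's own argument: apply Theorem~\ref{relChekanov} with all intersection sums vanishing to get $H_*(V,\partial_-V;\Z_2)=0$, and then invoke the $\Z_2$-version of Poincar\'e--Lefschetz duality from Remark~\ref{rem1} to contradict $H_0(V;\Z_2)=\Z_2$. (You are in fact slightly more careful than the paper here, which asserts $H_0(V,\partial_+V;\Z_2)=0$ ``by Theorem~\ref{relChekanov}'' even though the theorem controls $H_*(V,\partial_-V)$ directly; your flipped-orientation variant $V:L\rightsquigarrow\emptyset$, where $\partial_-V=\emptyset$, is the cleanest way to make that line literal.)

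For the first assertion you take a genuinely different route. The paper does not invoke Theorem~\ref{adcobthm} at all: it observes that $V$ exhibits $L$ as null-bordant in $\tilde{M}$, projects along $\tilde{M}\to M$ to conclude $[L]=0\in H_n(M;\Z)$, and then appeals to the classical adjunction formula $\chi(L)=\pm[L]\cdot[L]$ for a closed oriented Lagrangian in $M$. You instead specialize Theorem~\ref{adcobthm} (the cobordism adjunction formula) to the degenerate case where both intersection sums are empty, obtain $\chi(V,L)=0$, combine with Remark~\ref{rem1} to get $\chi(V)=0$, and finish with $\chi(L)=\chi(V)-\chi(V,L)=0$. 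Both routes are valid. The paper's is shorter and more elementary, at the cost of re-importing the classical adjunction formula as an external fact; yours is more self-contained, showing that the first part of Corollary~\ref{cor0} is a direct corollary of the paper's own Theorem~\ref{adcobthm} together with nothing beyond Euler characteristic bookkeeping, and it has the cosmetic advantage of running in tight formal parallel with your proof of the second assertion.
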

Of course the only case where the first conclusion in Corollary \ref{cor0} is a \emph{symplectic} phenomenon is when both $n$ and $\chi(L)$ are even.\footnote{Recall that every closed odd dimensional manifold $N$ satisfies $\chi(N)=0$. Similarly it is well-known that the boundary of a compact manifold has even Euler characteristic, so for $\chi(L)$ odd the conclusion of the corollary follows from classical topology.} The corollary in particular implies that the only oriented Lagrange surfaces in a symplectic 4-manifold which can be Lagrangian null-cobordant are Lagrangian tori. In contrast, recall that in the smooth category \emph{every} oriented surface is oriented null-cobordant!
A final application of Theorem \ref{cor2} to elementary Lagrangian cobordisms yields the following result which was pointed out to us by Fran\c{c}ois Charette.
\begin{cor}
	\label{whitehead}
	Let $L,L' \in \mathcal{L}$ be simply connected and suppose there exists a small, simply connected Lagrangian cobordism $V: L'\rightsquigarrow L$. If $\dim(V)\geq 6$ then $V$ is diffeomorphic to $[0,1]\times L$. In particular $L$ is diffeomorphic to $L'$.
\end{cor}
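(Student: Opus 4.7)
The plan is to reduce the statement to Smale's $h$-cobordism theorem. The only non-classical input required is Theorem \ref{cor2}, which has already been established.

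First, I would apply Theorem \ref{cor2} to the elementary small Lagrangian cobordism $V: L'\rightsquigarrow L$. As noted at the beginning of Subsection \ref{app1}, the right-hand side of (\ref{eq3}) vanishes for elementary cobordisms, so under the (implicit) assumption that at least one of $L, L'$ is spin, Theorem \ref{cor2} tells us that the inclusions
\[
\iota_-: L \hookrightarrow V, \qquad \iota_+: L' \hookrightarrow V
\]
induce isomorphisms on $H_*(-;\Z)$.

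Second, since $L$, $L'$ and $V$ are simply connected CW-complexes and $\iota_\pm$ induce isomorphisms on integral homology, J.\,H.\,C.\ Whitehead's theorem (applied to the mapping cylinder of each inclusion) upgrades the $\iota_\pm$ to homotopy equivalences. This is precisely the statement that $(V,\partial_- V,\partial_+ V)$ is a smooth, compact $h$-cobordism from $L'$ to $L$.

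Finally, since $\dim(V)=n+1\geq 6$ and $\pi_1(V)=1$, Smale's $h$-cobordism theorem produces a diffeomorphism $V\cong [0,1]\times L$; in particular $L\cong L'$. The only step which uses anything beyond classical smooth topology is the first one, and so the genuine obstacle — the homological uniqueness of $\partial_- V \hookrightarrow V$ — has already been overcome in the proof of Theorem \ref{cor2}. What remains is routine.
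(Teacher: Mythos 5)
Your argument follows exactly the paper's route: apply Theorem \ref{cor2} to obtain $H_*(V,L;\Z)=H_*(V,L';\Z)=0$, then invoke the $h$-cobordism theorem (via the Whitehead step that you spell out and the paper leaves implicit). The only divergence is in the treatment of the spin hypothesis needed for the $\Z$-coefficient version of Theorem \ref{cor2}: you flag ``at least one of $L,L'$ is spin'' as an additional, implicit assumption, whereas the paper's proof asserts that $L$ and $L'$ being simply connected already forces them to be spin. That assertion is not true in general (for instance, the zero section $\C P^2\times S^3\subset T^*(\C P^2\times S^3)$ is a simply connected, non-spin Lagrangian of dimension $7$), so your caution is in fact warranted; the trade-off is that, as written, your argument establishes a slightly weaker statement than what Corollary \ref{whitehead} literally claims, and the gap here is a feature of the paper's own proof rather than something you introduced.
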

\begin{rem}
Corollary \ref{whitehead} concerns the case $\dim(M)\geq 10$. In the case $\dim(M)=4$ one can apply Theorem \ref{adcobthm} to get a conclusion of a similar spirit: Suppose $V:L\rightsquigarrow L'$ is an orientable Lagrangian cobordism between two orientable Lagrange surfaces $L,L'\in \mathcal{L}(M^4,\omega)$. Then, by additivity of $\chi$ and Theorem \ref{adcobthm}, we have $\chi(L)=\chi(V)=\chi(L')$. Hence $L$ and $L'$ are diffeomorphic!
\end{rem}
\begin{ex}
\label{ex1}
Consider $\mathbb{T}^2=\R^2/\Z^2$ equipped with the symplectic structure $\omega_{\mathbb{T}^2}$ inherited from $\R^2$. Denote by $L_h:=\{y=\tfrac{1}{2}\}$ and $L_v:=\{x=\tfrac{1}{2}\}$ the standard horizontal and vertical Lagrangians. Fix two curves $\gamma_1,\gamma_2 \subset \mathbb{T}^2$ as in Figure \ref{cobfig1} and denote by $\epsilon>0$ the sum of the areas of the little gray "triangles". Performing Lagrange surgery \cite{Polterovich91} along $\gamma_1$ we obtain the surgered Lagrangian $L_h \# L_v$. By Biran-Cornea's Lagrangian cobordism theory \cite{BiranCornea13} the trace of this surgery can be realized as a Lagrangian cobordism  $V_1:L_h \# L_v \rightsquigarrow (L_v,L_h)$. Similarly we can perform Lagrange surgery along $\gamma_2$ and obtain a Lagrangian cobordism $V_2:(L_v,L_h)\rightsquigarrow L_v \# L_h$. Concatenating $V_1$ and $V_2$ we obtain a Lagrangian cobordism $V:L_h \# L_v \rightsquigarrow  L_v \# L_h$. Denote by $B$ the bounded connected component of $\R^2 \backslash \pi (V)$, where $\pi: \tilde{M}\to \R^2$ denotes the projection. Consider now a split almost complex structure on $\tilde{\mathbb{T}}^2:=\R^2 \times \mathbb{T}^2$ of the type $\mathfrak{i}\oplus J$, where $\mathfrak{i}$ denotes the standard complex structure on $\R^2 \approx \C$. We then have an $\mathfrak{i}\oplus J$-holomorphic disk with boundary on $V$: 
\begin{align*}
u:(\overline{B},\partial \overline{B})&\to (\tilde{\mathbb{T}}^2,V) \\
z &\mapsto (z,(\tfrac{1}{2},\tfrac{1}{2})).
\end{align*}
Since the curve $u|_{\partial \overline{B}}\subset V$ projects to the non-trivial element of $H_1(\R^2 \backslash B;\Z_2)\cong \Z_2$ the inclusion $L_H \# L_V \hookrightarrow V$ does \emph{not} induce an isomorphism in $\Z_2$-homology. Hence, by Theorem \ref{cor2} $V$ is \emph{not} small. In fact it is easy to check that $[u]$ generates $\pi_2(\tilde{\mathbb{T}}^2,V)$ and therefore, if $\epsilon < \area(B)$, we conclude that the class $[u]\in \pi_2(\tilde{\mathbb{T}}^2,V)$ must contain a $\tilde{J}$-holomorphic disk for every $\tilde{\omega}$-compatible almost complex structure $\tilde{J}$ which is \emph{standard at} $\infty$ (see Section \ref{secbubcob}). This implies that $A(\tilde{\mathbb{T}}^2,V)=\int u^*\tilde{\omega}=\area(B)$ and thus
\begin{equation}
\label{eq34}
\mathcal{S}(V)=A(\tilde{\mathbb{T}}^2,V)+\epsilon.
\end{equation}
\begin{figure}
	\centering
	\begin{tikzpicture}
\draw [thick] (0,0) -- (0,5) -- (5,5) -- (5,0) -- (0,0);
\draw [thick] (2.5,0) -- (2.5,5);
\node [below] at (0.5,2.5) {\small $L_h$};
\draw [thick] (0,2.5) -- (5,2.5);
\node [right] at (2.5,0.5) {\small $L_v$};
\draw [thick, fill=gray] (1.5,2.5) to [out=0, in=270] (2.5,3.5) to (2.5,2.5) to (1.5,2.5);
\draw [thick, cyan] (1.5,2.5) to [out=0, in=270] (2.5,3.5);
\draw [thick, fill=gray] (2.5,3.5) to [out=270, in=180] (3.5,2.5) to (2.5,2.5) to (2.5,3.5);
\draw [thick, red] (2.5,3.5) to [out=270, in=180] (3.5,2.5);
\node at (2,3) {\small $\gamma_1$};
\node at (3,3) {\small $\gamma_2$};
\node [below] at (2.5,0) {\small $x$};
\node [left] at (0,2.5) {\small $y$};
\node [below] at (0,0) {\small $0$};
\node [below] at (5,0) {\small $1$};
\node [left] at (0,0) {\small $0$};
\node [left] at (0,5) {\small $1$};
\node [above right] at (0,0) {\small $\mathbb{T}^2$};
\draw [thick, dashed, <->] (6.5,1) to (6.5,0) to (7.5,0);
\draw [thick] (6.5,2.5) to (8,2.5) to [out=0, in=45] (8.5,2.8) to [out=45, in=135] (11.5,2.8) to [out=135, in=180] (12,2.5) to (13.5,2.5);
\draw [thick] (7.5,2.5) to (8,2.5) to [out=0, in=315] (8.5,2.2) to [out=315, in=225] (11.5,2.2) to [out=45, in=180] (12,2.5) to (12.5,2.5);
\draw [thick] (8.5,2.8) to [out=45, in=135] (11.5,2.8) to [out=315, in=45] (11.5,2.2) to [out=225, in=315] (8.5,2.2) to [out=135, in=225] (8.5,2.8);
\draw [thick, fill=black] (11.5,2.8) to [out=315, in=45] (11.5,2.2) to [out=45, in=180] (12,2.5) to [out=180, in=315] (11.5,2.8);
\draw [thick, fill=black] (8,2.5) to [out=0, in=315] (8.5,2.2) to [out=135, in=225] (8.5,2.8) to [out=225, in=0] (8,2.5);
\node at (10,2.5) {\small $B$};
\node [above right] at (6.5,0) {\small $\R^2$};
\node [above left] at (13.5,2.5) {\small $L_h \# L_v$};
\node [above right] at (6.5,2.5) {\small $L_v \# L_h$};
\node [below] at (10,1.6) {\small $\pi(V)$};
\end{tikzpicture}
	\caption{In the left figure $L_h,L_v\subset \mathbb{T}^2$ are indicated together with the curves $\gamma_1$ (blue) and $\gamma_2$ (red) along which Lagrange surgery is performed. The gray region has area $\epsilon$. In the right figure the projection $\pi(V)\subset \R^2$ of $V$ is indicated in black. As a consequence of the construction of Lagrangian cobordism via surgery \cite{BiranCornea13} we have $\area(\pi(V))=\epsilon$.}
	\label{cobfig1}
\end{figure}
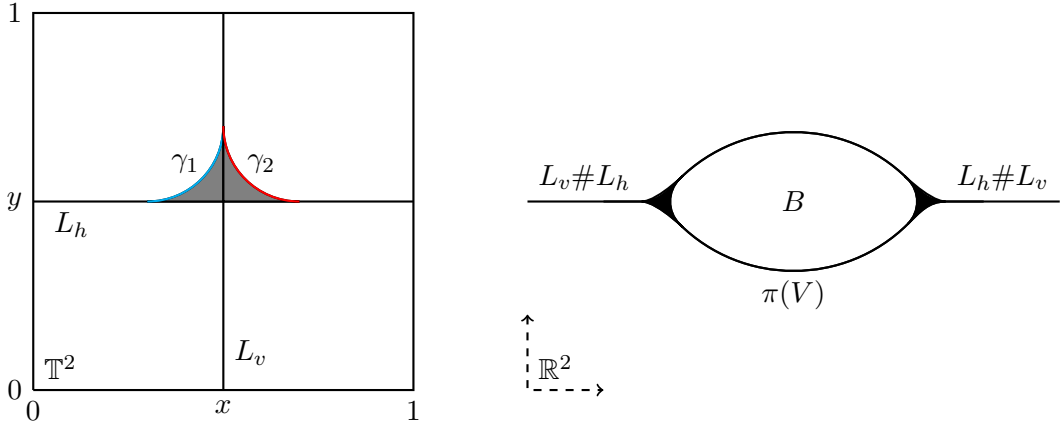
\end{ex}

The construction in Example \ref{ex1} can be carried out for every $\epsilon >0$, so (\ref{eq34}) implies that Theorem \ref{relChekanov} is optimal in the following sense: Its statement would cease to be true if one were to replace $A(\tilde{M},V)$ by a larger number (or $\mathcal{S}(V)$ by a smaller number) in (\ref{eq35}). We do not know, however, if Theorem \ref{relChekanov} continues to be true if one replaces "$<$" by "$\leq$" in (\ref{eq35}).

\subsection{Applications to Lagrangian cobordisms with multiple ends}
\label{app2}
Above we saw that, if we have a small Lagrangian cobordism $V: L' \rightsquigarrow L$, then the homology of $L$ determines that of both $V$ and $L'$. In this section we consider small Lagrangian cobordisms $V: L' \rightsquigarrow (L_i)_i$ from a "singleton" $L'\in \mathcal{L}$ to an ordered $m$-tuple $(L_i)_{i=1}^m\in \mathcal{L}$. The main interest in such cobordisms comes from the fact that, in certain situations, they are known to correspond to (possibly multiple) exact triangles in a suitable version of the derived Fukaya category \cite[Theorem A]{BiranCornea14}. The main questions we are interested in concern homological uniqueness: Does the data 
\begin{equation}
\label{eq105}
\bigoplus_{i=1}^mH_*(L_i;\Z)\quad \& \quad \mathcal{I}:=\sum_{1\leq i<j\leq m}\#(L_i\cap L_j)
\end{equation}
associated to $(L_i)_{i}$ determine $H_*(V;\Z)$ and $H_*(L';\Z)$? We first note an obstruction to finding small Lagrangian cobordisms with many ends:
\begin{cor}
\label{cormultend1}
Let $(L_i)_{i=1}^m\subset \mathcal{L}$ be a transverse $m$-tuple of Lagrangians in $(M,\omega)$. If $L'\in \mathcal{L}$ and there exists a small Lagrangian cobordism $V: L' \rightsquigarrow (L_i)_i$ then
\begin{equation}
\label{eq100}
\mathcal{I}\geq m-1.
\end{equation}
\end{cor}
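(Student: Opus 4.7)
The plan is to combine Theorem~\ref{relChekanov} with a short long-exact-sequence computation that bounds $\dim_{\Z_2} H_*(V,\partial_-V;\Z_2)$ from below by $m-1$.

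First I would apply Theorem~\ref{relChekanov} with $\F=\Z_2$ (so the spin hypothesis is not needed). In the situation of the corollary, $\partial_+ V=L'$ is a single Lagrangian, so the second sum on the right-hand side of~(\ref{eq3}) is empty. Smallness of $V$ therefore yields
\[
\dim_{\Z_2}\! H_*(V,\partial_{-}V;\Z_2)\ \leq\ \sum_{1\leq i<j\leq m}\#(L_i\cap L_j)\ =\ \mathcal{I}.
\]

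Next I would prove the matching lower bound $\dim_{\Z_2} H_*(V,\partial_-V;\Z_2)\ge m-1$ using the long exact sequence of the pair $(V,\partial_-V)$ with $\Z_2$ coefficients:
\[
\cdots\longrightarrow H_1(V,\partial_-V;\Z_2)\xrightarrow{\ \delta\ } H_0(\partial_-V;\Z_2)\xrightarrow{\ i_*\ } H_0(V;\Z_2)\longrightarrow\cdots
\]
Since $V$ is connected and each $L_j$ is connected, $H_0(V;\Z_2)=\Z_2$ and $H_0(\partial_-V;\Z_2)=\Z_2^m$. The inclusion $\partial_- V\hookrightarrow V$ maps each generator to the generator of $H_0(V;\Z_2)$, so $i_*$ is surjective with kernel of $\Z_2$-dimension $m-1$. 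By exactness, $\delta$ hits this $(m-1)$-dimensional kernel, hence $\dim_{\Z_2} H_1(V,\partial_-V;\Z_2)\ge m-1$, and a fortiori $\dim_{\Z_2} H_*(V,\partial_-V;\Z_2)\ge m-1$.

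Combining the two inequalities gives $\mathcal{I}\ge m-1$, which is exactly~(\ref{eq100}). There is no real obstacle in this argument: the only substantive input is Theorem~\ref{relChekanov}, and the rest is elementary homological algebra exploiting the fact that $\partial_-V$ has exactly $m$ connected components while $V$ has only one. In particular, using $\Z_2$-coefficients throughout bypasses any orientability or spin issues, so no extra hypotheses on $V$, $L'$, or the $L_i$ beyond those inherited from Theorem~\ref{relChekanov} are needed.
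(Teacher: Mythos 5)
Your proof is correct and rests on the same two ingredients as the paper's: the upper bound from Theorem~\ref{relChekanov} with $\F=\Z_2$, and a lower bound of $m-1$ on $\dim_{\Z_2}H_*(V,\partial_-V;\Z_2)$ forced by the fact that $\partial_-V$ has $m$ components while $V$ has one. The only difference is packaging: you read off the lower bound in degree $1$ from the long exact sequence of the pair $(V,\partial_-V)$, whereas the paper works in degree $n$ via the long exact sequence of the triple $(V,\partial V,L')$ (the two arguments are Poincar\'e--Lefschetz dual to one another, and the paper's choice is made so that the same diagram can be reused in the proof of Theorem~\ref{cormultend4}); your version is, if anything, a bit more direct for this particular corollary.
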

Our first result in the direction outlined above is 
\begin{thm}
\label{cormultend4}
Let $(L_i)_{i=1}^m\subset \mathcal{L}$ be a transverse $m$-tuple such that every $L_i$ is spin and $\mathcal{I}=m-1$. Then every $L'\in \mathcal{L}$ for which there exists a small Lagrangian cobordism $V: L' \rightsquigarrow (L_i)_i$ satisfies 
\begin{equation*}
H_l(L';\Z)\cong \left\{
\begin{array}{ll}
\Z, & \text{if}\ l= 0,n \\
\oplus_{i=1}^m H_l(L_i;\Z), & \text{if}\ l\neq 0,n \\
\end{array}
\right.
\end{equation*}
and $V$ satisfies
\begin{equation*}
H_k(V;\Z)\cong 
\left\{
\begin{array}{ll}
\Z, & \text{if}\ k= 0 \\
\oplus_{i=1}^m H_k(L_i;\Z), & \text{if}\ k\neq 0,
\end{array}
\right.
\end{equation*}
where the isomorphism in the case $k\neq 0$ is induced by the inclusion $\sqcup_{i=1}^{m}L_i\hookrightarrow V$. Of course, if some $L_i$ is not spin then the same conclusion holds for homology with coefficients in $\Z_2$.
\end{thm}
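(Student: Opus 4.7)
My plan is to feed Theorem~\ref{relChekanov} into the standard long-exact-sequence machinery, then dualize and run it again. Applied to the small cobordism $V: L' \rightsquigarrow (L_i)_i$, whose positive end is a single element, Theorem~\ref{relChekanov} gives
\[
\dim_\F H_*(V, \partial_-V; \F) \leq \mathcal{I} = m-1,
\]
for every field $\F$ when the spin hypothesis on the $L_i$ (and hence on $V$ within the Biran--Cornea framework) lets us apply it in full generality, and for $\F = \Z_2$ in general. On the other hand, since $V$ is connected and $\partial_-V = \bigsqcup_{i=1}^m L_i$ has $m$ components, the map $H_0(\partial_-V;\F) \to H_0(V;\F)$ has a kernel of dimension $m-1$, which via the long exact sequence of the pair $(V,\partial_-V)$ forces $\dim H_1(V,\partial_-V;\F) \geq m-1$. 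Both bounds must therefore be equalities, and $H_k(V,\partial_-V;\F)$ vanishes for $k\neq 1$ and has dimension $m-1$ for $k=1$.

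Feeding this back into the same long exact sequence yields that the inclusion $\bigsqcup_i L_i \hookrightarrow V$ induces isomorphisms $\bigoplus_i H_k(L_i;\F) \xrightarrow{\sim} H_k(V;\F)$ for $k \geq 1$, while $H_0(V;\F) = \F$; this is the description of $H_*(V)$ in the theorem at the field level. To analyze $H_*(L';\F)$ I would dualize: by Poincar\'e--Lefschetz duality (Remark~\ref{rem1}, using that $V$ is orientable), $H_k(V,L';\F) \cong H_{n+1-k}(V,\partial_-V;\F)$, so $H_*(V,L';\F)$ is concentrated in degree $n$ with dimension $m-1$. The long exact sequence of $(V,L')$ then yields isomorphisms $H_l(L';\F) \xrightarrow{\sim} H_l(V;\F)$ for $0 \leq l \leq n-2$, giving the claimed description of $H_l(L')$ in that range.

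The top degrees $l=n-1,n$ are handled by the 5-term segment
\[
0 \to H_n(L';\F) \to H_n(V;\F) \to H_n(V,L';\F) \to H_{n-1}(L';\F) \to H_{n-1}(V;\F) \to 0,
\]
three of whose dimensions are already known. Poincar\'e duality on $L'$, which is $\F$-orientable as a boundary component of the orientable $V$, gives $b_{n-1}(L';\F) = b_1(L';\F)$, which for $n \geq 3$ equals $\sum_i b_1(L_i;\F) = \sum_i b_{n-1}(L_i;\F)$ by the preceding paragraph and PD on each spin $L_i$. A dimension count then pins down $b_n(L';\F) = 1$ and forces the surjection $H_{n-1}(L';\F) \to H_{n-1}(V;\F)$ to be an isomorphism. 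The small cases $n = 1, 2$ are parallel but use direct PD on $L'$ since the range $1 \leq l \leq n-2$ may be empty. Finally, to pass from field to $\Z$ coefficients, one applies this analysis for $\F = \Q$ and $\F = \Z_p$ for every prime $p$, and concludes by the Universal Coefficient Theorem applied to the mapping cones of the inclusions $\bigsqcup_i L_i \hookrightarrow V$ and $L' \hookrightarrow V$.

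The main obstacle I anticipate is careful bookkeeping of orientation data: the argument requires $V$ orientable (for Poincar\'e--Lefschetz duality) and, over fields of odd characteristic, $V$ spin (for Theorem~\ref{relChekanov} in full generality). Both should be ensured by the assumption that each $L_i$ is spin, via the standard fact that in the Biran--Cornea framework the spin structures on the boundary assemble into one on $V$. In the non-spin case the entire argument goes through unchanged with $\F = \Z_2$, yielding the $\Z_2$-version advertised.
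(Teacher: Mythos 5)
Your overall skeleton matches the paper's: bound $\dim_\F H_*(V,\partial_-V;\F)$ by $m-1$ via Theorem~\ref{relChekanov}, produce a matching lower bound forcing concentration in a single degree, run the long exact sequences of $(V,\partial_-V)$ and $(V,L')$, and pass to $\Z$ at the end. Your route to the lower bound (the kernel of $H_0(\partial_-V;\F)\to H_0(V;\F)$ has dimension $m-1$, forcing $\dim H_1(V,\partial_-V;\F)\geq m-1$) is a clean, slightly more elementary variant of the paper's argument, which instead gets $\dim H_n(V,L';\Z_2)\geq m-1$ directly from diagram~(\ref{eq102}); the two are equivalent by Poincar\'e--Lefschetz duality with $\Z_2$ coefficients.

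However, there is a genuine gap: you assert that $V$ is orientable and spin ``via the standard fact that in the Biran--Cornea framework the spin structures on the boundary assemble into one on $V$.'' This is not a fact, standard or otherwise: the definition of a Lagrangian cobordism imposes no orientability or spin constraint on $V$, and in general a cobordism between spin manifolds need not even be orientable. Establishing that $V$ is orientable (hence spin) is precisely the hard point of the proof for $\Z$ coefficients, and it uses the \emph{smallness} of $V$ in an essential way. The paper's argument is: since each $L_i$ is orientable, $w_1(V)$ restricts to $0$ in $H^1(\partial_-V;\Z_2)$ and so lifts to $\alpha\in H^1(V,\partial_-V;\Z_2)$, whose dual $\alpha\smallfrown[V]_{\Z_2}$ lies in $H_n(V,L';\Z_2)$; by the $\Z_2$-computation already done, the map $H_n(V,L';\Z_2)\to H_n(V,\partial V;\Z_2)$ vanishes, which forces $w_1(V)\smallfrown[V]_{\Z_2}=0$ and hence $w_1(V)=0$ by $\Z_2$-Poincar\'e--Lefschetz duality. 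Then $w_2(V)=0$ follows from the bootstrapping argument of Remark~\ref{rem2} using the injectivity of $H^2(V;\Z_2)\to H^2(\partial_-V;\Z_2)$ (a consequence of $H^2(V,\partial_-V;\Z_2)=0$). Without this step you cannot invoke Poincar\'e--Lefschetz duality over $\Q$ or $\Z_p$ for odd $p$, nor can you apply Theorem~\ref{relChekanov} for those fields, so the $\Z$-version collapses. Your $\Z_2$-argument and the subsequent bookkeeping are otherwise fine, and your final UCT/mapping-cone step is essentially what the paper does (it cites \cite[Corollary 3A.6.]{Hatcher02}) though the paper additionally checks that the isomorphisms are \emph{induced by inclusion} via a freeness and rank-count argument on $i_\Z$, which you should not gloss over since the theorem asserts this.
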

Let's put this result into perspective.
\begin{defi}
We will say that an ordered $m$-tuple $(L_j)_{j=1}^m\subset \mathcal{L}$ is \emph{simple} if it is transverse and satisfies the following two conditions
\begin{enumerate}[a)]
\item
$L_i \cap L_j\cap L_k=\emptyset$ for all distinct $i,j,k$
\item
$(\cup_{j=1}^mL_j)\subset M$ is a connected subset.
\end{enumerate} 
Note that, if $(L_j)_{j=1}^m\subset \mathcal{L}$ is simple then all singular points of the Lagrange immersion $(\sqcup_j L_j)\hookrightarrow M$ are transverse and double.
\end{defi}

Biran-Cornea \cite{BiranCornea13} discovered that under certain conditions the trace of Lagrange surgery can be realized as Lagrangian cobordisms with multiple ends. In fact, as we will see below these "trace of surgery"-cobordisms are often small. Conversely, Theorem \ref{cormultend4} suggests that if $\mathcal{I}$ is not too large, then every small $V: L' \rightsquigarrow (L_i)_i$ is (homologically) the "trace of surgery"-cobordism of the $(L_i)_i$ and $L'$ is (homologically) a surgery of $(L_i)_i$. In order to explore this idea further we point out that in the present paper the term "Lagrange surgery" should be understood in the sense of \cite{Polterovich91}. Recall that, given a simple $m$-tuple $(L_j)_{j=1}^m\subset \mathcal{L}$, the operation developed in \cite{Polterovich91} allows one, after choosing an equipment at every singular point of the immersed Lagrangian $(\cup_i L_i)\subset (M,\omega)$, to paste in a Lagrange handle in order to obtain an embedded singleton $\widetilde{\#}_iL_i\in \mathcal{L}$.\footnote{We use $\widetilde{\#}$ in order to emphasize that, if the Lagrange immersion $(\cup_i L_i)\hookrightarrow (M,\omega)$ has multiple singular points, then the surgered Lagrangian $\widetilde{\#}_iL_i$ will not in general coincide with the connected sum $\#_iL_i$.} Although $\widetilde{\#}_iL_i$ in general depends on many choices, the diffeomorphism type of $\widetilde{\#}_iL_i$ only depends on the choice of an equipment of each singular point of $\cup_i L_i$ \cite{Polterovich91}. While there are no obstructions in the choice of equipment at each intersection point from the point of view of Lagrange surgery, the equipments must be chosen consistently in order to obtain an associated Lagrangian cobordism $\widetilde{\#}_iL_i \rightsquigarrow (L_i)_i$ (see Example \ref{ex4} below). The following result is perhaps the most important application of our results. 

\begin{cor}
\label{cormultend5}
Let $(L_i)_{i=1}^m\subset \mathcal{L}$ be a simple $m$-tuple whose intersection graph is a tree. Suppose in addition that every $L_i$ is spin. Then every small Lagrangian cobordism $V:L' \rightsquigarrow (L_i)_i$ from a singleton $L'\in \mathcal{L}$ to $(L_i)_i$ satisfies 
\[
H_*(V;\Z)\cong H_*(\widetilde{V};\Z),
\]
where $\widetilde{V}:\widetilde{\#}_iL_i \rightsquigarrow (L_i)_i$ is a Lagrangian "trace of surgery" cobordism, the surgery resulting in $\widetilde{\#}_iL_i$ being performed with respect to any equipment of $(L_i)_i$. Moreover, $L'$ satisfies 
\[
H_*(L';\Z)\cong H_*(\widetilde{\#}_iL_i;\Z).
\] 
If some $L_i$ isn't spin then these conclusions hold for homology with coefficients in $\Z_2$.
\end{cor}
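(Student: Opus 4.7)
The plan is to derive Corollary~\ref{cormultend5} directly from Theorem~\ref{cormultend4}, together with a standard classical-topological computation of the homology of $\widetilde{V}$ and $\widetilde{\#}_i L_i$. The key observation is that the tree hypothesis immediately forces the intersection count $\mathcal{I}$ appearing in Theorem~\ref{cormultend4} to equal exactly $m-1$.

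Indeed, since $(L_i)$ is simple, each pair $(L_i, L_j)$ meets transversely at finitely many points with no triple overlaps, so the number of edges of the intersection graph equals $\mathcal{I}$. A tree on $m$ vertices has precisely $m-1$ edges, hence $\mathcal{I}=m-1$. If every $L_i$ is spin, Theorem~\ref{cormultend4} then applies and gives
\begin{equation*}
H_k(V;\Z)\cong \begin{cases} \Z, & k=0,\\ \bigoplus_i H_k(L_i;\Z), & k\neq 0, \end{cases}
\qquad
H_l(L';\Z)\cong \begin{cases} \Z, & l=0,n,\\ \bigoplus_i H_l(L_i;\Z), & l\neq 0,n, \end{cases}
\end{equation*}
with the positive-degree isomorphism for $V$ induced by the inclusion of ends.

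It remains to establish the same formulas with $V$ and $L'$ replaced by $\widetilde{V}$ and $\widetilde{\#}_i L_i$; this is a purely classical-topological computation, decoupled from Floer theory. Polterovich surgery at a single transverse double point of two Lagrangians is smoothly an $n$-dimensional $0$-surgery, so $L_i\,\widetilde{\#}\, L_j$ is diffeomorphic to the smooth connected sum $L_i \# L_j$, and the associated Biran--Cornea trace cobordism is obtained from $[0,1]\times (L_i \sqcup L_j)$ by attaching a smooth $1$-handle. Iterating along the $m-1$ edges of the tree, $\widetilde{\#}_i L_i$ is diffeomorphic to the iterated connected sum $L_1 \# \cdots \# L_m$, and the standard connected-sum formula recovers the formula for $H_*(L';\Z)$ displayed above. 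The cobordism $\widetilde{V}$ is then homotopy equivalent to $\sqcup_i L_i$ with $m-1$ arcs attached along the edges of the tree; because no cycles appear in this attaching graph, a cellular (or Mayer--Vietoris) computation shows that $\bigoplus_i H_k(L_i;\Z)\to H_k(\widetilde{V};\Z)$ is an isomorphism for $k\geq 1$, while the attached $1$-cells kill $m-1$ of the $m$ generators of $H_0(\sqcup_i L_i;\Z)$, leaving $H_0(\widetilde{V};\Z)=\Z$. Matching these computations with the output of Theorem~\ref{cormultend4} yields the claimed isomorphisms. The non-spin case runs identically with $\Z$ replaced by $\Z_2$.

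The main obstacle is bookkeeping rather than substance: one must verify that the Biran--Cornea construction genuinely models Polterovich surgery as a smooth $1$-handle attachment, and check that the independence of $\widetilde{\#}_i L_i$ from the choice of equipments — which only enters at the smooth level through the framing of each handle — does not affect the homology, since different equipments produce diffeomorphic connected sums and homotopy-equivalent $1$-handle cobordisms. The tree hypothesis is exactly what makes the attaching graph acyclic, so that the classical side of the computation collapses cleanly and matches the Floer-theoretic output of Theorem~\ref{cormultend4}.
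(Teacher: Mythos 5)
Your proof is correct and follows essentially the same route as the paper: observe that the tree hypothesis forces $\mathcal{I}=m-1$, apply Theorem~\ref{cormultend4}, and then compute $H_*(\widetilde{V})$ and $H_*(\widetilde{\#}_iL_i)$ by elementary topology using the homotopy equivalence of $\widetilde{V}$ with $\cup_iL_i$ and the connected-sum description of $\widetilde{\#}_iL_i$. The paper's proof is just a terser version of exactly this argument.
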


\begin{ex}
\label{ex3}
There are many examples of symplectic manifolds $(M,\omega)$ which admit simple $m$-tuples $(L_j)_{j=1}^m\subset \mathcal{L}(M,\omega)$ whose intersection graphs are trees. One such example is the plumbing of $m$ unit codisk bundles of $m$ closed Riemannian manifolds. Other examples are $(A_m)$-configurations ($m>1$) of Lagrangian two-spheres in symplectic 4-manifolds, in the sense of \cite{Seidel99} (see also \cite[Section 8]{Seidel99} for explicit examples of such configurations inside $\{z_1^2+z_2^2=z_3^{m+1}+\tfrac{1}{2}\}\subset \C^3(z_1,z_2,z_3)$). Yet another example can be found in \cite{BiranMembrez16} where the authors (among many other things) study Lagrangian submanifolds of $\C P^2$ (symplectically) blown up at two points. Here they show the existence of two Lagrangian spheres having a single transverse intersection point.
\end{ex}
\begin{ex}
	\label{ex4}
	Fix $n>1$ and consider a simple pair $(L_1,L_2)\subset \mathcal{L}(M^{2n},\omega)$ of orientable Lagrangians. Suppose $\#(L_1\cap L_2)=k+1$ for some $k\in \N \cup \{0\}$. We equip the immersed Lagrangian $L_1\cup L_2\subset (M,\omega)$ consistently in the sense that in a Darboux-Weinstein neighborhood of every element of $L_1\cap L_2$, $L_1$ is identified with $\R^n$ and $L_2$ with $\mathfrak{i}\R^n$. Then fix some point $p\in L_1\cap L_2$ and prescribe that the equipment at $p$ is positive. This choice induces a sign to the equipment at every other element of $L_1\cap L_2$ (see \cite{Polterovich91}). We will say that $(L_1,L_2)$ is \emph{positive} if the sign of the equipment of every element of $L_1\cap L_2$ thus produced is positive, and we will say that it is \emph{negative} otherwise. This terminology does not depend on the choice of $p\in L_1\cap L_2$ and $(L_1,L_2)$ is positive if and only if $(L_2,L_1)$ is. Note in particular that if $k=0$, then the pair $(L_1,L_2)$ is automatically positive. If at least one of the Lagrangians in the simple pair $(L_1,L_2)\subset \mathcal{L}$ is non-orientable then we say that $(L_1,L_2)$ is negative.
	
	In case $(L_1,L_2)$ is a positive simple pair a construction due to Biran-Cornea \cite{BiranCornea13} yields a Lagrangian \emph{trace of surgery cobordism}
	\begin{equation}
	\label{eq106}
	V:\widetilde{\#}_iL_i \rightsquigarrow  (L_1, L_2) 
	\end{equation}
	where $\widetilde{\#}_iL_i \approx L_1\# L_2\# kP^n$ as a smooth manifold, with $P^n=S^{n-1}\times S^1$. If on the other hand $(L_1,L_2)$ is negative then the construction yields a Lagrangian trace of surgery cobordism as in (\ref{eq106}) but this time $\widetilde{\#}_i L_i \approx L_1\# L_2\# kQ^n$ as a smooth manifold, where $Q^n$ denotes the mapping torus of an orientation reversing involution of $S^{n-1}$. In either case the "trace of surgery"-cobordism $V$ has the homotopy type of the topological subspace $L_1\cup L_2\subset M$ and $\tilde{\omega}(\pi_2(\tilde{M},V))=\omega(\pi_2(M,L_1\cup L_2))$. It is easy to see from the construction of $V$ in \cite{BiranCornea13} that one can achieve $\mathcal{S}(V)<\delta$ for any $\delta>0$. In particular we see that $V$ can be made small if $\omega(\pi_2(M,L_1\cup L_2))=\delta \cdot \Z$ for some $\delta >0$.
\end{ex}

\subsubsection{Counting holomorphic disks}

We will use the terminology that a Lagrangian $L\in \mathcal{L}(M,\omega)$ is \emph{monotone} if 
\begin{equation*}
\omega|_{H_2(M,L;\Z)}\equiv \tau_L \cdot \mu_L|_{H_2(M,L;\Z)}
\end{equation*}
for some constant $\tau_L\geq 0$. Here $\omega |_{H_2(M,L;\Z)}$ denotes integration of $\omega$ and $\mu_L|_{H_2(M,L;\Z)}$ denotes the Maslov index. Hence, implicit in our definition of a monotone Lagrangian is that $\mu_L$ can be viewed as a homomorphism $H_2(M,L;\Z)\to \Z$. This is for example the case when $\pi_2(M,L)\cong H_2(M,L;\Z)$ or  $M=\R^{2n}$. Given a monotone Lagrangian $L\in \mathcal{L}(M,\omega)$ we denote by $\mathcal{D}_L\subset \pi_2(M,L)$ the set of elements which have Maslov index 2. Suppose now that $L$ is monotone and spin, and that a spin structure for $L$ has been fixed. View $\mathbb{D}:=\{z\in \C \ | \ |z|\leq 1\}$ as a Riemann surface with the complex structure induced from $\C$. For $\alpha \in \mathcal{D}_L$ and a $\omega$-compatible almost complex structure $J\in \mathcal{J}(M,\omega)$ we consider the moduli space
\[
\widetilde{\mathcal{M}}_L(\alpha,J):=\{ u:(\mathbb{D},\partial \mathbb{D})\to (M,L)\ | \ \overline{\partial}_J(u)=0, \ [u]=\alpha  \}.
\]
See e.g. \cite[Section (8f)]{Seidel08} for the definition of $\overline{\partial}_J$. For generic $J\in \mathcal{J}(M,\omega)$ the set $\widetilde{\mathcal{M}}_L(\alpha,J)$ admits the structure of a $(n+2)$-dimensional manifold, so the quotient $\mathcal{M}_L(\alpha,J)$ by the group of conformal transformation of the disk preserving $1\in \mathbb{D}$ is an $n$-dimensional oriented compact manifold (the orientation induced by the choice of a spin structure). For $\alpha \in \mathcal{D}_L$ we define $\eta_L(\alpha;\Z)\in \Z$ to be the degree of the evaluation map $ev:\mathcal{M}_L(\alpha,J)\to L$ given by $ev([u])=u(1)$. By the usual cobordism argument $\eta_L(\alpha;\Z)$ does not depend on the choice of $J$ and it depends on the choice of a spin structure on $L$ only up to a sign. If $L$ is orientable, but not spin then $\mathcal{M}_L(\alpha,J)$ need not be orientable, but it is still compact so the mod 2 degree $\eta_L(\alpha;\Z_2)\in \Z_2$ of $ev$ is well-defined. The following result first appeared in Chekanov's \cite{Chekanov97} with the assumption that the cobordism $V$ is monotone rather than small. However, the proof presented there seems to contain a gap. We do not know if the result as stated in \cite{Chekanov97} holds true but here we prove it under the stronger assumption that $V$ is small.
\begin{cor}
\label{cor6}
Let $L,L'\in \mathcal{L}(M,\omega)$ be monotone and spin and equipped with spin structures. Suppose moreover that $V:L' \rightsquigarrow L$ is a small Lagrangian cobordism. Then the isomorphism $H_*(L;\Z)\cong H_*(L';\Z)$ from Theorem \ref{cor2} induces a bijection $\mathcal{D}_L \leftrightarrow \mathcal{D}_{L'}$ such that the diagram
\[
\xymatrix{
	\mathcal{D}_L  \ar[rd]_{\eta_L(\cdot; \Z)} \ar@{<->}[rr] &  & \mathcal{D}_{L'} \ar[ld]^{\eta_{L'}(\cdot; \Z)} \\
	 & \Z & \\
}
\]
commutes up to sign.
\end{cor}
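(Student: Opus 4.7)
The plan is to count Maslov index $2$ holomorphic disks directly on the cobordism $V$ and to match this count with both $\eta_L$ and $\eta_{L'}$.

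I would first establish the bijection $\mathcal D_L \leftrightarrow \mathcal D_{L'}$ topologically. Theorem \ref{cor2} gives that the inclusions $L, L' \hookrightarrow V$ are homology isomorphisms; combining this with $H_*(\tilde M) \cong H_*(M)$ and applying the five lemma to the long exact sequences of $(M, L) \hookrightarrow (\tilde M, V) \hookleftarrow (M, L')$ (with $M$ embedded as a slice in the relevant end of $V$) yields isomorphisms
\[
H_2(M, L;\Z) \xrightarrow{\cong} H_2(\tilde M, V;\Z) \xleftarrow{\cong} H_2(M, L';\Z)
\]
respecting $\omega$-area and Maslov index. In particular $V$ is monotone with $\tau_V = \tau_L = \tau_{L'}$, and restricting to Maslov $2$ classes produces a canonical bijection $\mathcal D_L \leftrightarrow \mathcal D_V \leftrightarrow \mathcal D_{L'}$.

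Next, I would fix a generic $\tilde\omega$-compatible almost complex structure $\tilde J$ on $\tilde M$ which is split, $\tilde J = \mathfrak i \oplus J$, outside a compact subset of the $\R^2$-factor. For $\tilde \alpha \in \mathcal D_V$, let $\mathcal M_V(\tilde \alpha, \tilde J)$ denote the moduli space of $\tilde J$-holomorphic disks $u:(\mathbb D, \partial \mathbb D) \to (\tilde M, V)$ in class $\tilde \alpha$, modulo reparametrizations fixing $1 \in \mathbb D$. This is a smooth oriented $(n+1)$-manifold, oriented via a spin structure inherited by $V$ from those on $L, L'$. Smallness combined with monotonicity rules out disk and sphere bubbling in the standard way (a Maslov $2$ disk bubble would absorb all of the area $2\tau_V$ of $\tilde\alpha$, forcing the principal component to be constant, which contradicts $\tilde\alpha \neq 0$), and the split form of $\tilde J$ at infinity together with a shadow bound on the $\R^2$-projection keeps $\mathcal M_V(\tilde \alpha, \tilde J)$ contained in a compact region of $V$. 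Evaluation $ev_V([u]) = u(1)$ therefore has a well-defined $\Z$-valued degree $\eta_V(\tilde \alpha;\Z)$.

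Finally, I would pick a regular value $p \in L$ of $ev_V$ lying in the split region of the negative end of $V$. For $[u] \in ev_V^{-1}(p)$ the $\R^2$-projection $u_1 = \pi \circ u$ is $\mathfrak i$-holomorphic with boundary on $\pi(V)$, hence has area at most $\mathcal S(V)$; combining this with the total area $\tilde\omega(\tilde\alpha) = 2\tau_V$ and the $A(\tilde M, V)$-threshold applied to the $M$-component of $u$ forces $u_1 \equiv \pi(p)$, so $u$ reduces to a $J$-holomorphic disk on $L$ through $p$ in the class $\alpha \in \mathcal D_L$ corresponding to $\tilde \alpha$. The orientations match up to the universal sign induced by the spin structures, giving $\eta_V(\tilde\alpha;\Z) = \pm \eta_L(\alpha;\Z)$, and the symmetric argument on the positive end yields $\eta_V(\tilde\alpha;\Z) = \pm \eta_{L'}(\alpha';\Z)$, which completes the proof. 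The main obstacle is the moduli-space analysis on $V$, namely compactness/transversality and the sign compatibility of the three spin-induced orientations; both issues are handled via the smallness hypothesis, which forces every Maslov $2$ holomorphic disk on $V$ to have trivial $\R^2$-projection.
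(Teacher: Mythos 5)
Your overall strategy---establishing the bijection $\mathcal{D}_L \leftrightarrow \mathcal{D}_V \leftrightarrow \mathcal{D}_{L'}$ via Theorem \ref{cor2}, then relating the disk counts by passing through a parametrized moduli space $\mathcal{M}_V$ over the cobordism---is the same as the paper's. The first step (five lemma on the long exact sequences of $(M,L)\hookrightarrow (\tilde M, V)\hookleftarrow (M,L')$, monotonicity of $V$, restriction to Maslov-$2$ classes) is correct and matches the paper's.

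However, there is a genuine gap in your treatment of the moduli space $\mathcal{M}_V(\tilde\alpha,\tilde J)$. You assert that it is ``contained in a compact region of $V$'' and hence that $ev_V$ has a well-defined $\Z$-degree $\eta_V(\tilde\alpha;\Z)$. This is false: whenever the class $\tilde\alpha$ is represented by a fiberwise disk over a point $(x,a_j^\pm)$ in a cylindrical end of $V$, translating $x$ along the ray gives a whole noncompact family of solutions, so $\mathcal{M}_V$ is not a closed manifold and the naive degree of $ev_V$ is not defined. What is true---and what the paper uses---is that after truncating $V$ to a compact cobordism $V\subset [-R,R]\times\R\times M$ and choosing $\tilde J\in\tilde{\mathcal{J}}(B)$, the open mapping theorem forces any $\tilde J$-disk hitting $\{|x|>R-\epsilon\}$ to be entirely contained in a single fiber $\{(x,y)\}\times M$; from this one sees $\mathcal{M}_V$ is a compact $(n+1)$-manifold \emph{with boundary}, and $\partial\mathcal{M}_V=\mathcal{M}_L(\alpha_L,J)\sqcup\mathcal{M}_{L'}(\alpha_{L'},J)$. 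The conclusion then follows not from computing a single degree of $ev_V$ but from the commuting square relating $ev_*:H_{n+1}(\mathcal{M}_V,\partial\mathcal{M}_V;\Z)\to H_{n+1}(V,\partial V;\Z)$ with the connecting maps $\partial$ to $H_n$; this is precisely the cobordism-of-moduli-spaces argument, and the boundary structure is essential.

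Two smaller points: (i) you attribute the absence of bubbling to ``smallness combined with monotonicity,'' but in the paper's proof of this corollary only monotonicity is used to exclude bubbling (minimal Maslov $2$); the smallness hypothesis is used for a different purpose, namely to guarantee the injectivity of $i_*\colon H_2(M,L;\Z)\to H_2(\tilde M,V;\Z)$ on which the bijection $\mathcal{D}_L\leftrightarrow\mathcal{D}_V$ depends, which is exactly the gap the paper identifies in Chekanov's original argument. (ii) Your final step, forcing $\pi\circ u\equiv\pi(p)$ by an area/shadow estimate against the bubbling threshold, is not a valid argument as stated---a bound $\area(\pi\circ u)\le\mathcal{S}(V)$ does not combine with $\tilde\omega(\tilde\alpha)=2\tau_V$ and $A(\tilde M,V)$ to force constancy; the correct mechanism is again the open mapping theorem applied to the split $\tilde J$ near the ends.
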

In \cite{Chekanov96} and \cite{Chekanov97} Chekanov found and studied his famous exotic Lagrangian tori in standard symplectic vector space. For each $k\in \{1,\ldots ,n\}$ Chekanov produced a monotone Lagrangian torus $T^n_k\in \mathcal{L}(\R^{2n},\omega_{\R^{2n}})$ with the property that $T^n_k$ and $T^n_{k'}$ are in different Hamiltonian isotopy classes whenever $k\neq k'$.\footnote{In this notation $T^n_n$ is the Lagrangian product torus consisting of the product of $n$ circles in $\R^{2n}=(\R^2)^n$. For the definition of $T^n_k$ we refer to \cite{Chekanov96} or \cite{Chekanov97}.} This result was proved in \cite{Chekanov97}, using ideas due to Eliashberg-Polterovich \cite{EliashbergPolterovich93}, by showing that there are exactly $k$ elements $\alpha \in \mathcal{D}_{T^n_k}$ for which $\eta_{T^n_k}(\alpha; \Z_2)=1$. A consequence of Corollary \ref{cor6} and \cite[Lemma 2.1]{Chekanov97} is that the same holds true for small cobordisms: $T^n_{k}$ and $T^n_{k'}$ are not cobordant by a small Lagrangian cobordism if $k\neq k'$. It was of course already known that there exist no orientable monotone Lagrangian cobordism $T^n_{k} \rightsquigarrow T^n_{k'}$ for $k$ odd and $k'$ even (see e.g. \cite[Remark 2.3.1.v.]{BiranCornea13}). 

We will now consider an explicit example using the $T^n_k$ in order to demonstrate that Lagrange surgery \emph{not} always gives rise to a (small) Lagrange cobordism. Given $a>0$ we denote by $T^n_k(a)$ Chekanov's torus $T^n_k$ embedded in $\R^{2n}$ in such a way that a Maslov 2 disk has area $a$. Suppose in the following that $n\geq 3$ is odd and consider $T:=T^n_k(a)$ for $k$ even and $T':=T^n_{k'}(a)$ for $k'$ odd. Now (Hamiltonian) perturb $T$ and $T'$ such that $T\pitchfork T'$ and $\#(T\cap T')=2$. Choose a compatible equipment of the tuple $(T,T')$. Clearly this equipment must be a negative. We denote by $T\widetilde{\#}_-T'$ the surgery performed with respect to this compatible equipment. Note that $T\widetilde{\#}_-T'$ is non-orientable and that we have have a "trace of surgery"-cobordism $T\widetilde{\#}_-T' \rightsquigarrow (T,T')$. Since we assume $n$ is odd we can switch the sign of the equipment at one of the intersection points by simply changing the choice of which torus is identified with $\R^n$ and which is identified with $\mathfrak{i}\R^n$ at that point. We denote by $T\widetilde{\#}_+T'$ the surgery performed with respect to this changed equipment. Note that $T\widetilde{\#}_+T'$ is orientable. 
\begin{cor}
\label{cor00}
Suppose in the above setting that we have a small Lagrangian cobordism $V:L\rightsquigarrow (T,T')$ for some $L\in \mathcal{L}(\R^{2n},\omega_{\R^{2n}})$. Then both $L$ and $V$ are non-orientable and 
\[
H_*(L;\Z_2)\cong H_*(T\widetilde{\#}_-T';\Z_2).
\]
Moreover, 
\[
H_*(V;\Z_2)\cong H_*(\widetilde{V};\Z_2)
\]
where $\widetilde{V}:T\widetilde{\#}_-T'\rightsquigarrow (T,T')$ denotes the trace of surgery Lagrangian cobordism. In particular there does not exist a small Lagrangian cobordism $T\widetilde{\#}_+T' \rightsquigarrow (T,T')$.
\end{cor}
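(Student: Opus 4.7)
The strategy is to apply Theorem \ref{relChekanov} to obtain a tight bound on the relative homology $H_*(V, \partial_-V; \Z_2)$, then extract both the homological uniqueness and the non-orientability via a long-exact-sequence analysis together with the oriented version of Theorem \ref{adcobthm}.

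First, Theorem \ref{relChekanov} with $\F = \Z_2$ gives $\dim_{\Z_2} H_*(V, \partial_-V; \Z_2) \leq \#(T \cap T') = 2$. Connectivity of $V$ forces $H_0(V, \partial_-V; \Z_2) = 0$, and Poincar\'e-Lefschetz duality over $\Z_2$ combined with $L \neq \emptyset$ forces $H_{n+1}(V, \partial_-V; \Z_2) = 0$. A total dimension of $0$ would give $H_*(V;\Z_2) \cong H_*(\partial_-V; \Z_2)$, contradicting the connectedness of $V$; total dimension $1$ would give $|\chi(V, \partial_-V)| = 1$, contradicting the mod-$2$ parity $\chi(V, \partial_-V) \equiv I(T, T') \equiv 0 \pmod 2$ supplied by the unoriented form of Theorem \ref{adcobthm}. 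Hence the dimension is exactly $2$, and by PL duality the same holds for $\dim_{\Z_2} H_*(V, \partial_+V; \Z_2)$.

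The crucial step is a case-by-case LES analysis pinning down the degrees of these two classes. Using the known $\Z_2$-Betti numbers of $\partial_-V = T \sqcup T'$, the parallel bound $\dim H_*(V, \partial_+V; \Z_2) \leq 2$, the closed-manifold constraint $H_n(L; \Z_2) = \Z_2$, and the duality relating $H_k(V, \partial_-V)$ with $H_{n+1-k}(V, L)$, one rules out all placements except the one that, fed back through the LES of $(V, \partial_-V)$, yields
\[
H_*(V; \Z_2) \cong H_*(T \cup T'; \Z_2) = H_*(\widetilde V; \Z_2),
\]
the last identification coming from Example \ref{ex4} (which says $\widetilde V \simeq T \cup T'$). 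Dually, the LES of $(V, L)$ yields $H_*(L; \Z_2) \cong H_*(T\widetilde\#_-T'; \Z_2)$.

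It follows that $\chi(V) = \chi(T \cup T') = -2$. If $V$ were orientable, Theorem \ref{adcobthm} would give
\[
\chi(V, \partial_-V) = (-1)^{n(n+1)/2}\, I(T, T') = 0,
\]
since $(T, T')$ is a negative pair so its two transverse intersection points have opposite signs for any compatible orientation; this contradicts $\chi(V) = -2$, so $V$ is non-orientable. For non-orientability of $L$: the degree-placement above makes the relevant relative groups $H_1(V, L; \Z_2)$ and $H_2(V, L; \Z_2)$ vanish, so the LES of $(V, L)$ yields an inclusion-induced isomorphism $H_1(L; \Z_2) \xrightarrow{\sim} H_1(V; \Z_2)$ and dually $\iota_L^* : H^1(V; \Z_2) \xrightarrow{\sim} H^1(L; \Z_2)$. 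Because the normal bundle of $L$ in $V$ is trivial, $\iota_L^* w_1(TV) = w_1(TL)$; and $w_1(TV) \neq 0$ (by non-orientability of $V$) forces $w_1(TL) \neq 0$, so $L$ is non-orientable. The ``in particular'' statement is then immediate: if a small cobordism $V'\colon T\widetilde\#_+T' \rightsquigarrow (T, T')$ existed, the same analysis would force its positive end $T\widetilde\#_+T'$ to be non-orientable, contradicting its construction as an orientable manifold. The main obstacle is the LES case analysis: ruling out every a priori compatible degree-placement for the two classes of $H_*(V, \partial_-V; \Z_2)$ requires combining the constraints from both pairs $(V, \partial_\pm V)$ with the closed-manifold structure of $L$ in a uniform way across all odd $n \geq 3$.
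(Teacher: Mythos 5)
There is a genuine gap at what you yourself identify as the crucial step: the ``case-by-case LES analysis'' that is supposed to pin the two classes of $H_*(V,\partial_-V;\Z_2)$ into degree $1$. This step cannot be completed from the inputs you list (Betti numbers of the ends, $\mathcal{I}=2$, smallness, duality, connectivity, and the adjunction formula), for a structural reason: all of those inputs are identical for a \emph{positive} simple pair of monotone tori with two intersection points (e.g.\ two Hamiltonian perturbations of the same product torus), and for such a pair Example \ref{ex4} produces a small, \emph{orientable} trace-of-surgery cobordism $T\widetilde{\#}_+T'\rightsquigarrow (T,T')$ with orientable end. So no argument built only from that data can output ``$V$ is non-orientable.'' Concretely, the placement with one class in degree $1$ and one in degree $n$ of $H_*(V,\partial_-V;\Z_2)$ survives every constraint you invoke (it is compatible with both long exact sequences, with Lefschetz duality, with $H_n(L;\Z_2)=\Z_2$ and with Poincar\'e duality of $L$), and it yields $\dim H_1(L;\Z_2)=2n-1$ rather than the $2n+1$ of $T\widetilde{\#}_-T'$; placements of mixed parity such as $(1,2)$ give $\chi(V,\partial_-V)=0$ and hence do not even contradict orientability through Theorem \ref{adcobthm}. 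So both conclusions of the corollary escape your analysis.

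What the paper does instead is inject the genuinely symplectic information that distinguishes $T=T^n_k(a)$ ($k$ even) from $T'=T^n_{k'}(a)$ ($k'$ odd): Chekanov's counts $\eta_{T}(\cdot;\Z_2)$ and $\eta_{T'}(\cdot;\Z_2)$ of Maslov--$2$ disks have different parities (\cite[Lemma 2.1]{Chekanov97}). One builds the $1$--dimensional moduli space of simple $\tilde J$-disks on $V$ in a class $\alpha\in\mathcal D_V$ passing through a path $\gamma\subset V$ joining a generic point of $T$ to a generic point of $T'$; the parity mismatch forces this $1$--manifold to have an odd number of boundary points, hence to be non-compact, and the Gromov limit (cusp curve, or a non-simple disk handled via Lazzarini) produces a disk $v$ with $0<\tilde\omega(v)<a$ and $\mu_V(v)\le 1$. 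If $V$ were orientable, $\mu_V(v)$ would be even, and the boundary class of $v$ forces $\operatorname{rank}H_1(V,\partial_-V;\Z)\ge 2$; only then does smallness (Theorem \ref{relChekanov}) concentrate everything in degree $1$, giving $\chi(V,\partial_-V)=-2$ and the contradiction with Theorem \ref{adcobthm} (the two intersection points of a negative pair carry opposite signs, so $I(T,T')=0$). Once $V$ is known to be non-orientable, $w_1(V)\ne 0$ together with the $1$--dimensional image of $H^0(\partial_-V;\Z_2)\to H^1(V,\partial_-V;\Z_2)$ saturates the bound of Theorem \ref{relChekanov} and pins the degree placement; from there your long-exact-sequence computations, the $w_1$ restriction argument for $L$, and the ``in particular'' conclusion are all fine. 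You should keep your framework but replace the homological case analysis by this disk-counting argument.
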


\begin{rem}[Kazaryan-Vassilyev characteristic classes]
\label{remkazvas}
In general, given $(L_i)_{i}\subset \mathcal{L}$ with $\mathcal{I}\geq m$, one cannot expect that the data (\ref{eq105}) determines the homology of every $L'\in \mathcal{L}$ for which there exists a small Lagrangian cobordism $V: L' \rightsquigarrow (L_i)_i$. However, in certain situations one can deduce information about $H_*(L')$ if additional information about (Lagrange) characteristic classes of $L'$ is known.
Consider a closed manifold $W^n$ with cotangent bundle $T^*W\stackrel{\pi_W}{\longrightarrow} W$, and denote by $\omega$ the canonical (exact) symplectic form on $T^*W$. Recall that a \emph{caustic} of a Lagrangian $L\subset (T^*W,\omega)$ is a singularity of the map $\pi_W|_L:L\to W$. Arnol'd observed that there are topological obstructions to the coexistence of different types of caustics for a single Lagrangian $L\subset (T^*W,\omega)$ \cite{Vassilyev88}. In his beautiful book \cite{Vassilyev88} Vassilyev introduced Lagrange characteristic classes in the cohomology ring of a Lagrangian in $(T^*W,\omega)$ which "measure" these obstructions (see also \cite{Vassilyev81} and \cite[Chapter 6, Section 3.3-3.4]{ArnoldGivental01}). Later Kazaryan \cite{Kazaryan951}, \cite{Kazaryan952} found additional Lagrange characteristic classes corresponding to what he called "hidden singularities" and developed the theory of Lagrange characteristic classes in greater generality. Theorem \ref{relChekanov} shows that the existence of a small Lagrangian cobordism imposes even further restrictions on the Lagrange characteristic classes of its ends than the ones coming from purely homological reasons. More precisely we have the following estimate (here "Lagrange characteristic classes" should be understood in the sense of either \cite{Vassilyev88} or \cite{Kazaryan952}):

\begin{prop} 
\label{propkazvas}
Let $(L_i)_{i=1}^m\subset \mathcal{L}(T^*W, \omega)$ be an $m$-tuple of Lagrangians and suppose there are exactly $k\in \N \cup \{0\}$ Lagrange characteristic classes, each of which is non-zero in some $H^*(L_i;\Z_2 )$. Suppose $L'\in \mathcal{L}(T^*W,\omega)$ is a Lagrangian for which there exists a small Lagrangian cobordism $V:L'\rightsquigarrow (L_i)_i$. Then $L'$ has at most $\mathcal{I}+k+1-m$ distinct non-zero Lagrange characteristic classes in $H^{\geq 1}(L';\Z_2)$.
\end{prop}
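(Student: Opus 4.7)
The plan is to combine Theorem~\ref{relChekanov} with the naturality of the Vassilyev/Kazaryan Lagrange characteristic classes and the long exact cohomology sequence of the pair $(V,\partial_- V)$. First, identify $\R^2\times T^*W\cong T^*(\R\times W)$, so that $V$ itself is a Lagrangian submanifold of a cotangent bundle and therefore carries its own Lagrange characteristic classes in $H^*(V;\Z_2)$. Naturality of these constructions---the Lagrange singularities of the projection $V\to\R\times W$ restrict at the boundary slices to the Lagrange singularities of $L'\to W$ and $L_i\to W$---implies that every Lagrange characteristic class $c$ of $L'$ (resp.\ of some $L_i$) is the restriction $i_\pm^*\tilde c$ of a corresponding class $\tilde c\in H^*(V;\Z_2)$, where $i_\pm\colon \partial_\pm V\hookrightarrow V$.

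Next, apply Theorem~\ref{relChekanov} with $\F=\Z_2$ (for which no spin assumption is needed). Since $V$ has a single positive end, the $L'$-sum on the right-hand side of (\ref{eq3}) is empty, giving $\dim_{\Z_2}H_*(V,\partial_- V;\Z_2)\leq \mathcal{I}$. Passing to cohomology over the field $\Z_2$ preserves the total dimension, and connectedness of $V$ forces $H^0(V,\partial_- V;\Z_2)=0$, so $\dim_{\Z_2} H^{\geq 1}(V,\partial_- V;\Z_2)\leq \mathcal{I}$. In the long exact sequence of $(V,\partial_- V)$, the connecting map $\delta\colon H^0(\partial_- V;\Z_2)\to H^1(V,\partial_- V;\Z_2)$ has image of dimension $m-1$ (the cokernel of the diagonal $\Z_2\hookrightarrow \Z_2^m$), and this image is killed by the map to $H^1(V)$. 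Summing the identity $\dim\ker(i_-^*|_{H^d(V)})=\dim H^d(V,\partial_- V)-\dim\mathrm{im}(\delta_{d-1})$ over $d\geq 1$ therefore yields
\[
\dim_{\Z_2}\ker\!\bigl(i_-^*\colon H^{\geq 1}(V;\Z_2)\to H^{\geq 1}(\partial_- V;\Z_2)\bigr)\;\leq\;\mathcal{I}+1-m.
\]

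Finally, enumerate. Let $c_1,\dots,c_r$ be the distinct non-zero Lagrange characteristic classes of $L'$ in $H^{\geq 1}(L';\Z_2)$, and let $\tilde c_j\in H^{\geq 1}(V;\Z_2)$ be the extensions to $V$ produced in the first step. Partition the index set into $A=\{j:\tilde c_j|_{L_i}\ne 0\text{ for some }i\}$ and $B=\{1,\dots,r\}\setminus A$. The hypothesis on the $L_i$'s yields $|A|\leq k$. For $j\in B$ the class $\tilde c_j$ lies in $\ker(i_-^*|_{H^{\geq 1}(V)})$; interpreting ``distinct non-zero Lagrange characteristic classes'' in the Vassilyev/Kazaryan sense of a linearly independent family of classes, the $\{\tilde c_j\}_{j\in B}$ are linearly independent in $H^{\geq 1}(V;\Z_2)$ (their restrictions to $L'$ are), so the kernel bound forces $|B|\leq \mathcal{I}+1-m$, whence $r=|A|+|B|\leq k+(\mathcal{I}+1-m)$.

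The main obstacle is the careful verification of the first step: Lagrange characteristic classes are defined via Gauss-type maps into classifying spaces that depend on the dimension of the Lagrangian and on the target of the Lagrange projection, so one must compare the classifying space for $n$-dimensional Lagrangians in $T^*W$ with that for $(n+1)$-dimensional Lagrangians in $T^*(\R\times W)$ and check that the obvious one-dimensional stabilization of the Gauss map pulls the universal classes back to classes restricting correctly to each boundary slice. A secondary subtlety is the linear-independence reading of ``distinct non-zero'' used in the counting step; without it one would obtain only the much weaker bound $\log_2(|B|+1)\leq \mathcal{I}+1-m$.
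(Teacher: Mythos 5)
Your argument is essentially the paper's own, spelled out in more detail: identify $\R^2\times T^*W\cong T^*(\R\times W)$ so that $V$ carries Lagrange characteristic classes restricting to those of its ends; note that a class of $L'$ whose corresponding classes vanish on every $L_i$ lifts (via the long exact sequence of $(V,\partial_-V)$) to $H^{\geq 1}(V,\partial_-V;\Z_2)$; subtract the $m-1$ dimensions absorbed by $\delta\colon H^0(\partial_-V;\Z_2)\to H^1(V,\partial_-V;\Z_2)$; and cap $\dim H_*(V,\partial_-V;\Z_2)$ by $\mathcal{I}$ using Theorem~\ref{relChekanov}. The two subtleties you flag at the end---dimension-stability of the Vassilyev/Kazaryan classes under the passage from $T^*W$ to $T^*(\R\times W)$, and the linear-independence reading of ``distinct non-zero classes''---are indeed tacit in the paper's brief proof, so your write-up is, if anything, the more scrupulous of the two.
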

Note that in case $\mathcal{I}=m-1$ the proposition says that $L'$ has \emph{at most} $k$ non-vanishing Lagrange characteristic classes in $H^{\geq 1}(L';\Z_2)$. In fact in this case we know (by Theorem \ref{cormultend4}) that it has exactly $k$ non-vanishing Lagrange characteristic classes in $H^{\geq 1}(L';\Z_2)$.

Given a tuple $(L_i)_{i=1}^m\subset \mathcal{L}(T^*W, \omega)$, one can apply the estimate in Proposition \ref{propkazvas} and Kazaryan-Vassilyev's theory to obtain information about the caustics of a Lagrangian $L'\in \mathcal{L}(T^*W,\omega)$ for which there exists a small Lagrangian cobordism $V:L'\rightsquigarrow (L_i)_i$. The proposition is particularly nice if every $L_i$ is an exact perturbation of the zero-section $W\subset T^*W$, because in this case one has $k=0$. As an example, consider $S^n=\{x=(x_1,\ldots ,x_{n+1})\in \R^{n+1}\ |\ |x|=1 \}$ for $n\leq 6$ and let $f:S^n\to \R$ be the height function $f(x)=x_{n+1}$. Denote by $L_1$ the zero section in $(T^*S^n,\omega)$ and define $L_2:=\phi_F^1(L_1)$, where $F:=\pi_{S^n}^*f\in C^{\infty}(T^*S^n)$. Then $\mathcal{I}=m=2$ and $k=0$ for the pair $(L_1,L_2)\subset \mathcal{L}(T^*S^n,\omega)$. Hence, if $L'\in \mathcal{L}(T^*S^n,\omega)$ is a Lagrangian for which there exists a small Lagrangian cobordism\footnote{By Example \ref{ex4} such a Lagrangian is obtained by performing Lagrange surgery on the tuple $(L_1,L_2)$.} $V:L'\rightsquigarrow (L_1,L_2)$, then $L'$ can have at most one non-trivial Lagrange $\Z_2$-characteristic class of degree $\geq 1$. So if in addition it is known that $L'$ is nonorientable, then Vassilyev's theory \cite{Vassilyev88} implies that all singularities of the map $\pi_{S^n}|_{L'}:L'\to S^n$ of codimension $>1$ are $\Z_2$-homologically trivial, in the sense that their associated characteristic classes vanish.
\end{rem}
\begin{rem}
Consider for $\delta>0$ the space $\mathcal{L}_{\delta}:=\{ L\in \mathcal{L}\ |\ \omega(\pi_2(M,L))=\delta \cdot \Z \}$. In \cite{CorneaShelukhin15} Cornea and Shelukhin defined $d_c:\mathcal{L}_{\delta}\times \mathcal{L}_{\delta} \to [0,\infty]$ by 
\[
d_c(L,L'):=\inf_V(\mathcal{S}(V)),
\]
where the infimum runs over all Lagrangian cobordisms $V:L'\rightsquigarrow L$ satisfying $\tilde{\omega}(\pi_2(\tilde{M},V))=\delta \cdot \Z$. They showed that $d_c$ defines a (non-degenerate) metric on $\mathcal{L}_{\delta}$. From this point of view Theorem \ref{relChekanov} says that $H_*(L;\Z_2)\cong H_*(L';\Z_2)$ if $L,L'\in \mathcal{L}_{\delta}$ satisfy $d_c(L,L')<\delta$.
\end{rem}

\begin{rem}
	Consider the subgroup $\Symp_{c|M}(\tilde{M},\tilde{\omega})\leq \Symp(\tilde{M},\tilde{\omega})$ of symplectomorphisms which are \emph{compactly supported relative to $M$}. $\Symp_{c|M}(\tilde{M},\tilde{\omega})$ consists, by definition, of the $\psi \in \Symp(\tilde{M},\tilde{\omega})$ for which there exist a compact subset $C\subset \R^2$ and a $\psi'\in \Symp(M,\omega)$ such that $\psi=\id \times \psi'$ on $(\R^2 \backslash C)\times M$. As will be clear from Definition \ref{def2} below, $A(\tilde{M},V)$ it is invariant under elements of $\Symp_{c|M}(\tilde{M},\tilde{\omega})$. Hence, in the situation of Example \ref{ex1} we conclude that 
	\[
	\mathcal{S}(\psi(V))\geq \area(B) \quad \forall \ \psi \in \Symp_{c|M}(\tilde{\mathbb{T}}^2,\tilde{\omega}).
	\]
	This can be viewed as a kind of non-squeezing statement. More generally we obtain the following "Lagrangian non-squeezing statement": If $V: L'\rightsquigarrow L$ is a Lagrangian cobordism satisfying $H_*(V,L;\Z_2)\neq 0$, then Corollary \ref{cor2} implies that
	\[
	\mathcal{S}(\psi(V))\geq A(\tilde{M},V) \quad \forall \ \psi \in \Symp_{c|M}(\tilde{M},\tilde{\omega}) .
	\]
	See also \cite{Bisgaard16} for a different Lagrangian cobordism non-squeezing result. 
\end{rem}

\begin{rem}
	The main observation in the proof of Corollary \ref{cormultend1} can also be used to obtain the following estimate: If $V: (L'_i)_{i=1}^{m'} \rightsquigarrow (L_i)_{i=1}^{m}$ is a small Lagrangian cobordism and both $(L'_i)_{i=1}^{m'}$ and $(L_i)_{i=1}^{m}$ are transverse then 
	\[
	\max(m,m')-1\leq \sum_{1\leq i<j\leq m}\# (L_i\cap L_j)+\sum_{1\leq i<j\leq m'}\# ( L'_{i}\cap L'_{j}).
	\] 
\end{rem}

\subsection{Outlook and questions}
To the author's knowledge there a currently three known explicit constructions of elementary Lagrangian cobordisms (up to concatenating Lagrangian cobordisms coming from these constructions and applying symplectomorphisms of course): 
\begin{enumerate}[a)]
	\item
	The Lagrangian suspension construction (see Section 3.1.E in \cite{Polterovich01}).
	\item
	Lagrangian antisurgery. This construction was recently introduced by Haug \cite{Haug15}.
	\item
	Concatenating multi-ended Lagrangian cobordisms which are constructed as the trace of Lagrange surgery (this construction is due to Biran-Cornea \cite{BiranCornea13}).
\end{enumerate}
Elementary Lagrangian cobordisms of type c) are never small (see Example \ref{ex1}) and we do not know of any examples where a Lagrangian cobordism of type b) is small. On the other hand there are many examples of small Lagrangian suspensions.
\begin{question}
Is every small elementary Lagrangian cobordism $V\subset (\tilde{M},\tilde{\omega})$ the image of a Lagrangian suspension under an element of $\Symp_{c|M}(\tilde{M},\tilde{\omega})$?
\end{question} 
This question is very closely related to a conjecture by Biran-Cornea which states that every exact Lagrangian cobordism is Hamiltonian isotopic to a Lagrangian suspension \cite{Suarez14}.
Although this conjecture remains unsolved both Su\'arez \cite{Suarez14} and Tanaka \cite{Tanaka14} have made good progress towards confirming it. 

To our knowledge there is only one known explicit construction which produces Lagrangian cobordisms of the type $L'\rightsquigarrow (L_i)_{i}$ for an $m$-tuple $(L_i)_{i=1}^m\subset \mathcal{L}$ and $L'\in \mathcal{L}$ and that is the Biran-Cornea trace of surgery cobordism \cite{BiranCornea13}. As was noted in Example \ref{ex4} such cobordisms can often be made small. Motivated by Corollary \ref{cormultend5} and \ref{cor00} we ask:
\begin{question}
Given a simple $m$-tuple $(L_i)_{i=1}^m\subset \mathcal{L}$ whose intersection graph is a tree as well as $L'\in \mathcal{L}$, is every small Lagrangian cobordism $L'\rightsquigarrow (L_i)_{i}$ the image under an element of $\Symp_{c|M}(\tilde{M},\tilde{\omega})$ of a trace of surgery cobordism coming from Lagrange surgery of $(L_i)_{i}$?
\end{question}
\subsection*{Acknowledgement}
I am extremely grateful to my advisor Paul Biran for all the advice and encouragement during my work on this project. I am indebted to Luis Haug for his interest in this work and for pointing out a few mistakes in an early draft of this paper. I am indebted to Fran\c{c}ois Charette for pointing out that Corollary \ref{whitehead} follows directly from Theorem \ref{cor2}. I thank Octav Cornea for encouraging me to study \cite{Suarez14}.
\section{Preliminaries on Lagrangian cobordisms}
\label{seclagcob}
Here we collect a few facts and definitions about Biran-Cornea's Lagrangian cobordism theory \cite{BiranCornea13}, \cite{BiranCornea14}. We also give precise definitions of the objects used above. Given subsets $V\subset \tilde{M}=\R^2 \times M$ and $U\subset \R^2$ we write $V|_{U}=V\cap \pi^{-1}(U)$, where $\pi:\tilde{M}\to \R^2$ denotes the natural projection. Given an oriented manifold $K^k$ with boundary $\partial K$ we use the convention that the induced boundary orientation of $\partial K$ is given by the "outward normal first" convention. I.e. if $q\in \partial K$ then $(v_1,\ldots ,v_{k-1})$ is an oriented basis for $T_q(\partial K)$ if $(n_q,v_1,\ldots ,v_{k-1})$ is an oriented basis for $T_qK$, where $n_q\in T_qK$ points outward from $K$.

\begin{defi}[\cite{BiranCornea13}]
\label{defcob1}
We say that two ordered tuples $(L_i)_{i=1}^{m},(L'_i)_{i=1}^{m'}\in \mathcal{L}$ are \emph{Lagrangian cobordant} if for some $R>0$ there exists a smooth compact Lagrangian submanifold $V\subset ([-R,R]\times \R \times M, \omega_{\R^2}\oplus \omega )$ with boundary $\partial V =V\cap (\{\pm R\}\times \R \times M)$ satisfying the condition that for some $\epsilon >0$ we have
\begin{align}
\label{cob21}
V|_{[-R,-R+\epsilon)\times \R}&=\bigsqcup_{i=1}^{m}([-R,-R+\epsilon)\times \{i\})\times L_i \\
\label{cob22}
V|_{(R-\epsilon,R]\times \R}&=\bigsqcup_{j=1}^{m'}((R-\epsilon,R]\times \{j\})\times L'_j.
\end{align}
In particular $V$ defines a smooth compact cobordism $(V,\bigsqcup_{i=1}^{m}L_i,\bigsqcup_{j=1}^{m'}L'_j)$. We write $V:(L'_j)_j \rightsquigarrow (L_i)_i$. In case each $L_i$ and each $L'_j$ is oriented we say that $V$ is an \emph{oriented Lagrangian cobordism} if $V$ carries an orientation such that the associated boundary orientation of $\partial V$ coincides with the orientation given by $\partial V=( -\bigsqcup_{i=1}^{m}L_i) \bigsqcup ( \bigsqcup_{j=1}^{m'}L'_j)$. 
\end{defi}

As is customary in the field our notation does not distinguish between a Lagrangian cobordism and its horizontal $\R$-extension. This extension is a \emph{Lagrangian with cylindrical ends}. More generally we have
\begin{defi}[\cite{BiranCornea13}]
A \emph{Lagrangian with cylindrical ends} is a boundaryless Lagrangian submanifold $V\subset (\tilde{M},\tilde{\omega})$ satisfying the conditions that 1) $V|_{[a,b]\times \R}$ is compact for all $a<b$ and 2) there exists $R>0$ such that 
\begin{align*}
V|_{(-\infty,-R]\times \R}&=\bigsqcup_{i=0}^{m}((-\infty,-R]\times \{a^-_i\})\times L_i \\
V|_{[R,\infty)\times \R}&=\bigsqcup_{j=0}^{m'}([R,\infty)\times \{a^+_j\})\times L'_j
\end{align*}
for Lagrangians $L_i,L'_j \subset (M,\omega)$ and constants $a^-_i,a^+_j\in \R$ verifying $a^-_i\neq a^-_{i'}$ for $i\neq i'$ and $a^+_j\neq a^+_{j'}$ for $j\neq j'$.
\end{defi} 
\subsubsection{The shadow and bubbling threshold of a Lagrangian cobordism}
\label{secbubcob}
Given a Lagrangian with cylindrical ends $V\subset (\tilde{M},\tilde{\omega})$ we denote by $\mathcal{B}=\mathcal{B}(V)$ the collection of gaps in $V$, i.e. the collection of \emph{bounded} connected componenets of $\R^2 \backslash \pi(V)$. The following notions were coined by Cornea and Shelukhin in \cite{CorneaShelukhin15}.
\begin{defi}[\cite{CorneaShelukhin15}]
Given a Lagrangian with cylindrical ends $V\subset (\tilde{M},\tilde{\omega})$ we define the \emph{outline of $V$} as the closed subset of $\R^2$
\[
\ou(V):=\pi(V)\cup \left( \cup_{B\in \mathcal{B}}B \right).
\] 
The \emph{shadow of $V$} is then defined as the non-negative number 
\[
\mathcal{S}(V):=\area \left( \ou(V) \right).
\] 
\end{defi}

Denote now by $\mathcal{J}$ (respectively $\tilde{\mathcal{J}}$) the space of smooth almost complex structures on $M$ (respectively $\tilde{M}$) which are compatible with the symplectic structure. We denote by $\tilde{\mathcal{J}}_c \subset \tilde{\mathcal{J}}$ the subset consisting of almost complex structures which are \emph{standard at $\infty$} in the following sense: For every $\tilde{J}\in \tilde{\mathcal{J}}_c$ there exists a compact set $C\subset \R^2$ such that the restriction of $\tilde{J}$ to $(\R^2 \backslash C)\times M$ has the form $\mathfrak{i} \oplus J$, for some $J\in \mathcal{J}$. We say that \emph{$\tilde{J}$ is supported in $C$} and we denote by $\mathcal{\tilde{J}}(C)\subset \mathcal{\tilde{J}}_c$ the subset consisting of almost complex structures which are supported in $C$. Given $\tilde{J}\in \tilde{\mathcal{J}}$ we denote by $A_S(\tilde{M},\tilde{J})$ the minimal symplectic area of a non-constant $\tilde{J}$-holomorphic sphere in $\tilde{M}$. Given a Lagrangian with cylindrical ends $V\subset (\tilde{M},\tilde{\omega})$ we denote by $A_D(\tilde{M},V,\tilde{J})$ the minimal symplectic area of a non-constant $\tilde{J}$-holomorphic disk in $\tilde{M}$ with boundary on $V$. Suppose $\tilde{J}\in \tilde{\mathcal{J}}(C)$ for a compact set $C\subset \R^2$ and let $u$ be a $\tilde{J}$-holomorphic disk/sphere. It follows from the open mapping theorem that if $u$ satisfies $\image (\pi \circ u) \not\subset C$ then $z\mapsto \pi \circ u(z)$ is constant. With this fact at hand it is easy to adapt the usual compactness argument to show that 
\[
A_S(\tilde{M},\tilde{J}), A_D(\tilde{M},V,\tilde{J})>0 \quad \forall \ \tilde{J}\in \tilde{\mathcal{J}}_c.
\]
\begin{defi}
\label{def2}
Let $V\subset (\tilde{M},\tilde{\omega})$ be a Lagrangian with cylindrical ends. We define the bubbling threshold $A(\tilde{M},V)$ of $V$ by
\[
A(\tilde{M},V):=\sup_{\tilde{J}\in \tilde{\mathcal{J}}_c}A(\tilde{M},V,\tilde{J}),
\]
where $A(\tilde{M},V,\tilde{J}):=\min\{A_D(\tilde{M},V,\tilde{J}),A_S(\tilde{M},\tilde{J})\}$. 
\end{defi}
\section{Proofs}
\label{secproof}
We begin by proving our applications of Theorem \ref{adcobthm} and \ref{relChekanov}. The following remark will be used frequently.
\begin{rem}
\label{rem2}
Applying Theorem \ref{relChekanov} with coefficients in a field $\F \neq \Z_2$ requires us to know that the (small) cobordism $V: (L'_i)_{i=1}^{m'} \rightsquigarrow (L_j)_{j=1}^{m}$ is spin. However, in many cases the spin condition follows from the smallness assumption if we know e.g. that every $L_i$ is spin. The idea is the following bootstrapping argument: Suppose the intersection points in (\ref{eq3}) are so few that one can can apply the $\Z_2$-version of Theorem \ref{relChekanov} to verify that the inclusion $i:(\sqcup_iL_i)\hookrightarrow V$ induces injections $0\to H^k(V;\Z_2)\to H^k(\sqcup_iL_i;\Z_2)$ for $k=1,2$. Applying $i^*$ to the Stiefel-Whitney classes we have $i^*(w_k(V))=\sum_{i=1}^mw_k(L_i)=0$ for $k=1,2$. Here we use the assumption that every $L_i$ is spin. It follows that $w_k(V)=0$ for $k=1,2$, so $V$ is spin as claimed.
\end{rem}
\subsection{Proofs of results from Section \ref{app1}}
\begin{proof}[Proof of Theorem \ref{cor2}]
By Theorem \ref{relChekanov} every small elementary Lagrangian cobordism $V:L'\rightsquigarrow L$ satisfies $H_*(V,L;\Z_2)=0=H_*(V,L';\Z_2)$. In particular the inclusions $L,L'\hookrightarrow V$ induce isomorphisms on $\Z_2$-(co)homology. The $\Z_2$-version of the theorem follows. To obtain the $\Z$-version we apply Remark \ref{rem2} to conclude that one of $L$ and $L'$ being spin implies that $V$ is spin. Now we can apply Theorem \ref{relChekanov} to conclude that $H_*(V,L;\F)=0=H_*(V,L';\F)$ for every field $\F$. It therefore follows from the homological universal coefficients theorem \cite[Corollary 3A.6.]{Hatcher02} that $H_*(V,L;\Z)=0=H_*(V,L';\Z)$. Hence, the inclusions $L,L'\hookrightarrow V$ induce isomorphisms on $\Z$-(co)homology.
\end{proof}
\begin{proof}[Proof of Corollary \ref{cor1}]
Given a small Lagrangian cobordism $V:L'\rightsquigarrow L$ we can "bend" its right end in order to obtain a Lagrangian null-cobordism $V':\emptyset \rightsquigarrow (L,L')$. It is not hard to see that this bending can be done in such a way that $V'$ again is small. If $L \pitchfork L'$ then $V'$ has transversally intersecting ends. From the proof of Corollary \ref{cor2} we know that $V$ and hence $V'$ are spin. Now the conclusion follows by applying (\ref{eq3}) to $V'$. To see this, note that $H_*(V',\partial_+V';\F)=H_*(V';\F)\cong H_*(V;\F)\cong H_*(L;\F)$, where the last isomorphism comes from the inclusion $L\hookrightarrow V$.
\end{proof}
\begin{proof}[Proof of Corollary \ref{cor0}]
If $V:\emptyset \rightsquigarrow L$ is an oriented Lagrangian null-cobordism with boundary $L\in \mathcal{L}$ then elementary algebraic topology implies $[L]=0\in H_n(M;\Z)$, so $\chi(L)=0$. For the second part of the corollary, suppose for contradiction that $L\in \mathcal{L}$ admits a \emph{small} Lagrangian null-cobordism $V:\emptyset \rightsquigarrow L$. Then $\partial_+ V=\emptyset$, so $H_0(V,\partial_+ V;\Z_2)=H_0(V;\Z_2)=\Z_2$. On the other hand Theorem \ref{relChekanov} implies that $H_0(V,\partial_+ V;\Z_2)=0$. This contradiction finishes the proof.
\end{proof}
\begin{proof}[Proof of Corollary \ref{whitehead}]
$L$ and $L'$ being simply connected implies that they are both spin. Therefore the assumption that $V$ is small and Theorem \ref{cor2} imply that $H_*(V;L;\Z)=0=H_*(V,L';\Z)$. Now the conclusion follows from Smale's famous $h$-cobordism theorem \cite{Smale62}, \cite[Theorem 9.1]{Milnor65(1)}.
\end{proof}
\subsection{Proofs of results from Section \ref{app2}}
\begin{proof}[Proof of Corollary \ref{cormultend1}]
\label{proofmultend}
Recall that we are considering an $m$-tuple $(L_i)_{i=1}^m\subset \mathcal{L}$, a singleton $L'\in \mathcal{L}$ as well as a small Lagrangian cobordism $V:L' \rightsquigarrow (L_i)_i$. Note that (\ref{eq100}) follows from Theorem \ref{relChekanov} if only we show 
\begin{equation}
\label{eq103}
\dim_{\F}H_n(V,L';\F)\geq m-1
\end{equation}
with $\F=\Z_2$. To see this we consider the diagram
\begin{equation}
\label{eq102}
\xymatrix{
	\cdots  \ar[r] & H_{n+1}(V,\partial V;\F) \ar[r]^{\partial_{\F}} & H_n(\partial V,L';\F)\ar[r]^{i_{\F}} & H_n(V,L';\F) \ar[r] & \cdots \\
 & &  \oplus_{j=1}^mH_n(L_i;\F) \ar[u]_{\cong} \ar[r]& H_n(V;\F), \ar[u]& 
}
\end{equation}
with $\F=\Z_2$. Here the top horizontal line is a piece of the long exact sequence associated with the triple $(V,\partial V, L')$. Note that trivially $\oplus_{i=1}^mH_n(L_i;\Z_2)\cong \Z_2^m$ and $H_{n+1}(V,\partial V;\Z_2)\cong \Z_2$, so $i_{\Z_2}$ induces an embedding $\Z_2^{m-1}\hookrightarrow H_n(V,L';\Z_2)$ which proves (\ref{eq103}). 
\end{proof}
\begin{proof}[Proof of Theorem \ref{cormultend4}]
We first prove the $\Z_2$-version of the result. Recall that we are considering a small Lagrangian cobordism $V:L' \rightsquigarrow (L_i)_{i=1}^m$. Since we are assuming $\mathcal{I}=m-1$ the proof above together with Theorem \ref{relChekanov} and Poincar\'e-Lefschetz duality gives 
\begin{equation}
\label{eq47}
H_k(V,L';\F)=0=H_{n+1-k}(V,\partial_-V;\F) \quad \forall \ k\neq n 
\end{equation}
and
\begin{equation}
\label{eq48}
H_n(V,L';\F)=\F^{m-1}=H_{1}(V,\partial_-V;\F)
\end{equation}
with $\F=\Z_2$. Since the square in (\ref{eq102}) commutes we also know that the map $H_n(V;\Z_2)\to H_n(V,L';\Z_2)$ is onto. Therefore the map $H_k(L';\Z_2)\to H_k(V;\Z_2)$ induced by the inclusion is an isomorphism for all $k< n$. A similar consideration for the long exact sequence associated with the pair $(V,\partial_-V)$ shows that the map $H_k(\sqcup_{i=1}^mL_i;\Z_2)\to H_k(V;\Z_2)$ is an isomorphism for all $k> 0$ and therefore 
\begin{equation}
\label{eq50}
\oplus_{i=1}^m H_k(L_i;\F)\cong H_k(\partial_-V;\F)\cong H_k(V;\F )\cong H_k(L;\F), \quad 0<k< n
\end{equation}
follows for $\F=\Z_2$. This finishes the proof for $\Z_2$-coefficients. For the $\Z$-version we first claim that the assumption that every $L_i$ is spin implies that $V$ is spin. To see this we first check that $V$ is orientable. For this, note that the image of the first Stiefel-Whitney class $w_1(V)\in H^1(V;\Z_2)$ in $H^1(\partial_-V;\Z_2)$ vanishes, so $w_1(V)$ lifts to an element $\alpha \in H^1(V,\partial_-V;\Z_2)$ which is dual to $\alpha \smallfrown [V] \in H_n(V,L';\Z_2)$. Exactness in (\ref{eq102}) and surjectivity of $i_{\Z_2}$ implies that $j_{\Z_2}:H_n(V,L';\Z_2)\to H_n(V,\partial V;\Z_2)$ vanishes. We therefore conclude that
\[
0=j_{\Z_2}(\alpha \smallfrown [V])=w_1(V)\smallfrown [V],
\] 
which by Poincar\'e-Lefschetz duality implies $w_1(V)=0$, so $V$ is orientable. Note that $L$ too is orientable, being a boundary component of an orientable manifold. To see that also $w_2(V)=0\in H^2(V;\Z_2)$ it suffices to note that (\ref{eq47}) implies that $H^2(V,\partial_-V;\Z_2)=0$, so the inlcusion $\partial_-V \hookrightarrow V$ induces an injection $0\to H^2(V;\Z_2)\to H^2(\partial_-V;\Z_2)$. Hence, by Remark \ref{rem2} $w_2(V)=0$ and $V$ is spin. We can therefore fix any field $\F$ and apply Theorem \ref{relChekanov} to conclude $\dim_{\F}H_*(V,L';\F)\leq m-1$. Since every $L_i$ is orientable the isomorphism in (\ref{eq102}) implies $H_n(\partial V,L';\F)\cong \F^m$. Exactly as above we can apply Poincar\'e-Lefschetz duality to conclude (\ref{eq47}) and (\ref{eq48}), this time with coefficients in $\F$. Since these considerations hold for every field $\F$ it follows from the homological universal coefficients theorem \cite[Corollary 3A.6.]{Hatcher02} that (\ref{eq47}) and (\ref{eq48}) also hold with $\F =\Z$. To finish the proof we need to check that $i_{\Z}$ is onto. Since all groups displayed in the top horizontal line of (\ref{eq102}) with $\F=\Z$ are free, we can count ranks to conclude that if $i_{\Z}$ were not onto then $\Coker(i_{\Z})$ would be torsion (see e.g. \cite[Chapter II, Theorem 1.6]{Hungerford80}). Thus, by exactness in (\ref{eq102}) we conclude that, if $i_{\Z}$ were not onto, then $H_n(V,\partial V;\Z)$ would \emph{not} be free. However, duality implies that $H_n(V,\partial V;\Z)\cong H^1(V;\Z)$ is free and therefore $i_{\Z}$ must be onto. As in the previous proof it follows that $H_n(V;\Z)\to H_n(V,L';\Z)$ is onto so that the inclusion $L'\hookrightarrow V$ induces an isomorphism $H_k(L';\Z)\stackrel{\cong}{\longrightarrow}H_k(V;\Z)$ for all $k< n$. A similar consideration shows that the map $H_1(V,\partial_-V;\Z)\to H_0(\partial_-V;\Z)$ is injective. Comparing this to (\ref{eq47}) and (\ref{eq48}) with $\F=\Z$ one sees that the inclusion $\partial_-V \hookrightarrow V$ induces an isomorphism $H_k(\partial_-V;\Z)\stackrel{\cong}{\longrightarrow} H_k(V;\Z)$ for every $k>0$, so (\ref{eq50}) follows with $\F=\Z$. This proves the $\Z$-version of the theorem.
\end{proof}
\begin{proof}[Proof of Corollary \ref{cormultend5}]
Consider a small Lagrangian cobordism $V:L' \rightsquigarrow (L_i)_{i=1}^m$, where $(L_i)_{i=1}^m\subset \mathcal{L}$ is a simple $m$-tuple. If the intersection graph of $(L_i)_{i}$ is a tree then $\mathcal{I}=m-1$, so the homologies of $L'$ and $V$ are computed in Theorem \ref{cormultend4}. Moreover, any equipment of the immersed Lagrangian $(\cup_iL_i)\subset (M,\omega)$ will result in a surgery $\widetilde{\#}_iL_i$ which equals the connected sum $\#_iL_i$ at the level of smooth manifolds, and whose associated Lagrangian "trace of surgery" cobordism $\widetilde{V}:\widetilde{\#}_iL_i \rightsquigarrow (L_i)_{i=1}^m$ has the homotopy type of the subset $(\cup_iL_i)\subset (M,\omega)$. The result is now an easy computation. 
\end{proof}
\begin{proof}[Proof of Corollary \ref{cor6}]
Let $L,L'\in \mathcal{L}$ be monotone and spin and suppose $V:L' \rightsquigarrow L$ is a monotone Lagrangian cobordism. This was the setting in which Chekanov \cite{Chekanov97} originally proved Corollary \ref{cor6}. Chekanov's idea was that the signed count of holomorphic disks in a given class $\alpha \in H_2(M,L;\Z)$ should coincide with signed count of holomorphic disks in $\tilde{M}$ with boundary on $V$ representing the class $i_*(\alpha)\in H_2(\tilde{M},V;\Z)$, where $i:(M,L)\hookrightarrow (\tilde{M},V)$ denotes the inclusion. However, it appears that the proof of this presented in \cite{Chekanov97} contains a gap, because it seemingly requires that $i_*$ is injective. But $i_*$ is injective if $V$ is small! Hence, assume $V:L' \rightsquigarrow L$ is a small Lagrangian cobordism. By Theorem \ref{cor2} we then know that the inclusions $L,L'\hookrightarrow M$ induce isomorphisms $H_*(M,L;\Z)\cong H_*(\tilde{M},V;\Z)\cong H_*(M,L';\Z)$, so $V$ is also monotone and spin. Since the Maslov index is a characteristic class we have bijections 
\begin{equation}
\label{eq02}
\mathcal{D}_L \leftrightarrow \mathcal{D}_V \leftrightarrow \mathcal{D}_{L'}.
\end{equation}
Choose $R>0$ such that $V$ is cylindrical outside the "box" $B:=[\epsilon-R,R-\epsilon]^2$ for some small $\epsilon>0$. As in Definition \ref{defcob1} we will view $V$ as a subset of $([-R,R]\times \R \times M, \tilde{\omega})$ by "cutting off" its ends. For $\tilde{J}\in \tilde{\mathcal{J}}(B)$ and $\alpha_V \in \D_V$ we now consider the moduli space
\[
\widetilde{\mathcal{M}}_V(\alpha_V, \tilde{J}):=\{ u:(\mathbb{D}, \partial \mathbb{D})\to (\tilde{M},V)\ |\ \overline{\partial}_{\tilde{J}}u=0 \ \& \ [u]=\alpha_V \}.
\]
Recall that any $\tilde{J}\in \tilde{\mathcal{J}}(B)$ satisfies $\tilde{J}|_{\R^2 \backslash B}=\mathfrak{i}\oplus J$ for some $J\in \mathcal{J}(M,\omega)$. Therefore, by the open mapping theorem from complex analysis the image of every $\tilde{J}$-holomorphic disk $u\in \widetilde{\mathcal{M}}_V(\alpha_V,\tilde{J})$ passing through a point $(x,y,q)\in V\subset \tilde{M}$ with $|x|>R-\epsilon$ is contained in the fiber $\{(x,y)\}\times M$. It follows that, after perhaps perturbing $\tilde{J}\in \tilde{\mathcal{J}}(B)$ slightly, the space $\widetilde{\mathcal{M}}_V(\alpha_V, \tilde{J})$ (if $\neq \emptyset$) is a $(n+3)$-dimensional manifold with boundary $\partial \widetilde{\mathcal{M}}_V(\alpha_V, \tilde{J})=\widetilde{\mathcal{M}}_L(\alpha_L, J) \sqcup \widetilde{\mathcal{M}}_{L'}(\alpha_{L'} , J)$, where $\alpha_L \in \mathcal{D}_L$ and $\alpha_{L'} \in \mathcal{D}_{L'}$ are the unique elements which correspond to $\alpha_V$ under (\ref{eq02}). This transversality argument is quite straightforward using the well-known fact that constant index 0 disks are automatically transverse (see also \cite{BiranCornea13} for details). The action of the group of automorphisms of $\mathbb{D}$ which preserve $1\in \partial \mathbb{D}$ respects the boundary of $\widetilde{\mathcal{M}}_V(\alpha_V, \tilde{J})$, so the quotient $\mathcal{M}_V(\alpha_V, \tilde{J})$ is a smooth $(n+1)$-dimensional manifold with boundary $\partial \mathcal{M}_V(\alpha_V, \tilde{J})=\mathcal{M}_L(\alpha_L, J) \sqcup \mathcal{M}_{L'}(\alpha_{L'} , J)$. Choosing a spin structure on $V$, we obtain an orientation of $\mathcal{M}_V(\alpha_V, \tilde{J})$ and by Gromov compactness $\mathcal{M}_V(\alpha_V, \tilde{J})$ is compact. The claim of the corollary now follows from commutativity of the diagram
\begin{equation*}
\xymatrix{
	H_{n+1}(\mathcal{M}_V,\partial \mathcal{M}_V;\Z) \ar^{\partial}[d] \ar^{ev}[r] & H_{n+1}(V,\partial V;\Z) \ar^{\partial}[d] \\
 H_{n}(\partial \mathcal{M}_V;\Z) \ar^{ev}[r] & H_{n}(\partial V;\Z) 
}
\end{equation*}
where $\mathcal{M}_V=\mathcal{M}_V(\alpha_V, \tilde{J})$ and $ev$ denotes the evaluation map $ev([u])=u(1)$.
\end{proof}

\begin{proof}[Proof of Corollary \ref{cor00}]
Recall the setting of Corollary \ref{cor00}: We consider $T=T^n_k(a),T'=T^n_{k'}(a)\in \mathcal{L}(\R^{2n},\omega_{\R^{2n}})$ for some $k$ even, $k'$ odd and $n\geq 3$ odd, perturbed in such a way that $T\pitchfork T'$ and $\#(T\cap T')=2$. Suppose $L\in \mathcal{L}(\R^{2n},\omega_{\R^{2n}})$ is a Lagrangian for which we have a \emph{small} Lagrangian cobordism $V:L \rightsquigarrow (T,T')$. Fix $R>0$ such that $V$ is cylindrical outside $B:=[\epsilon -R,R-\epsilon]$ for some small $\epsilon>0$. We will view $V\subset [-R,R]\times \R \times M$. For $\alpha \in \mathcal{D}_V$ and $\tilde{J}\in \tilde{\mathcal{J}}(B)$ we consider the moduli space $\widetilde{\mathcal{M}}^*_V(\alpha;\tilde{J})$ of \emph{simple} $\tilde{J}$-holomorphic disks in $\tilde{M}$ with boundary on $V$ representing the class $\alpha$ (see \cite[Definition 3.1.1.]{BiranCornea07}). For generic $\tilde{J}\in \tilde{\mathcal{J}}(B)$ the quotient $\mathcal{M}^*_V(\alpha;\tilde{J})$ of $\widetilde{\mathcal{M}}^*_V(\alpha;\tilde{J})$ by the group of automorphisms of $\mathbb{D}$ which preserve $1\in \partial \mathbb{D}$ is a smooth $(n+1)$-dimensional manifold with boundary
\begin{equation}
\label{eq000}
\bigcup_{\beta} \mathcal{M}^*_L(\beta;J) \sqcup \bigcup_{\xi} \mathcal{M}^*_{T}(\xi;J) \sqcup \bigcup_{\zeta} \mathcal{M}^*_{T'}(\zeta;J),
\end{equation}
where the unions run over all elements of $\mathcal{D}_L,\mathcal{D}_{T},\mathcal{D}_{T'}$ which hit $\alpha$ when pushed into $H_2(\R^{2(n+1)},V;\Z)$. By \cite[Lemma 2.1]{Chekanov97} there is an odd number of elements $\zeta \in \D_{T'}$ such that $\eta_{T'}(\zeta; \Z_2)=1\in \Z_2$ and an even number of elements $\xi \in \D_{T}$ such that $\eta_T(\xi;\Z_2)=1\in \Z_2$. It follows that by perhaps changing $\alpha \in \D_V$ we can arrange that the parity of the number of $\zeta \in \D_{T'}$ occuring in (\ref{eq000}) for which $\eta_{T'}(\zeta;\Z_2)=1$ differs from the parity of the number of $\xi \in \D_{T}$ occuring in (\ref{eq000}) for which $\eta_{T}(\xi;\Z_2)=1$. Given such a choice of $\alpha$ we choose points $q\in T$ and $q'\in T'$ which are regular for the evaluation maps
\[
\mathcal{M}^*_{T}(\xi;J)\to T \quad \& \quad \mathcal{M}^*_{T'}(\zeta;J)\to T',
\]
associated to all $\zeta \in \mathcal{D}_{T'}$ and all $\xi \in \mathcal{D}_{T}$ occuring in (\ref{eq000}). Now choose a smooth embedding $\gamma: \R \to V$ such that\footnote{Here we view $V$ as a Lagrangian with cylindrical ends.} 
\[
\gamma(t)=\left\{  
\begin{array}{ll}
(t,1,q) & \text{if}\ t\leq \epsilon -R \\
(t,2,q') & \text{if}\ t\geq R-\epsilon 
\end{array}
\right.
\]
After perhaps perturbing $\gamma|_{[\epsilon-R,R-\epsilon]}$ we obtain that the evaluation map $ev:\mathcal{M}^*_V\to V$ as well as its restriction to the boundary $ev|_{\partial \mathcal{M}_V^*}:\partial \mathcal{M}_V^*\to \partial V$ are transverse to $\gamma$. It follows that $N:=ev^{-1}(\gamma)\subset \mathcal{M}^*_V$ is a smooth 1-dimensional manifold with boundary $\partial N=N\cap \partial \mathcal{M}_V^*$. The choice of $\alpha$ implies that $N$ has an odd number of boundary points. Hence, we can find a sequence $(u_j)_{j\in \N}\subset N$ which has no convergent subsequence in $N$. Applying Gromov convergence to $(u_j)$ we obtain a (Gromov) convergent subsequence, again denoted by $(u_j)$. By construction there are two possible (Gromov) limits:
\begin{enumerate}[a)]
\item
$(u_j)$ converges to a genuine cusp curve. I.e. the limit consists of multiple ($\# >1$) $\tilde{J}$-holomorphic disks in $\tilde{M}$ with boundary on $V$.
\item
$(u_j)$ converges to a $\tilde{J}$-holomorphic disk $u:(\mathbb{D},\partial \mathbb{D})\to (\tilde{M},V)$ representing the class $\alpha \in \D_V$. Since $ev(u)\in \gamma$ we conclude that $u$ cannot be simple.
\end{enumerate} 
Suppose now for contradiction that $V$ is orientable. Under this assumption we study the limits in the two cases a) and b) above: In case a) one of the limit disks $v:(\mathbb{D},\partial \mathbb{D})\to (\tilde{M},V)$ will satisfy $0<\tilde{\omega}(v)<a$ and $\mu_V(v)\leq 1$. Moreover, since $V$ is assumed orientable we must have $\mu_V(v)$ even, so 
\begin{equation}
\label{eq0000}
0<\tilde{\omega}(v)<a \quad \& \quad \mu_V(v)\leq 0.
\end{equation}
It follows that $[\partial v]\in H_1(V;\Z)/H_1(\partial_- V;\Z)$ generates an infinite cyclic subgroup, so the image $\beta \in H_1(V,\partial_- V;\Z)$ of $\partial v$ also generates an infinite cyclic subgroup. If instead we are in case b) we apply a result due to Lazzarini \cite{Lazzarini00} to extract a $\tilde{J}$-holomorphic disk $v:(\mathbb{D}, \partial \mathbb{D})\to (\tilde{M},V)$ which (by the same considerations as above) must satisfy (\ref{eq0000}). Again we conclude that the image $\beta \in H_1(V,\partial_- V;\Z)$ of $\partial v$ generates an infinite cyclic subgroup. We conclude that in both cases a) and b) $\Range (H_1(V;\Z)\to H_1(V,\partial_- V;\Z))$ contains an infinity cyclic subgroup. However, by studying the long exact sequence in homology one sees that $H_1(V,\partial_- V;\Z)/H_1(V;\Z)$ is an infinity cyclic subgroup, so we conclude that $\ran(H_1(V,\partial_- V;\Z))\geq 2$ which implies 
\[
\dim_{\Z_2}H_1(V,\partial_- V;\Z_2)\geq 2.
\]
Since $V$ is small Theorem \ref{relChekanov} now implies 
\begin{equation}
\label{eq001}
H_l(V,\partial_-V;\Z_2)\cong \left\{
\begin{array}{ll}
\Z_2\oplus \Z_2 & \text{if}\ l=1 \\
0 & \text{if}\ l\neq 1.
\end{array}
\right.
\end{equation}
Hence, $\chi(V,\partial_-V)=-2$ which (by Theorem \ref{adcobthm}) contradicts the assumption that $V$ is orientable. This contradiction shows that $V$ must be non-orientable. With this at hand one again easily computes (using the first Stiefel-Whitney class) that (\ref{eq001}) must hold. By Poincar\'e-Lefschetz duality this implies $H^1(V,L;\Z_2)\cong H_1(V,L;\Z_2)=0$ so that the first Stiefel-Whitney class of $L$ is non-trivial. We conclude that $L$ too is non-orientable. Computing the $\Z_2$-homology of $L$ and $V$ now comes down to writing out long exact sequences and using (\ref{eq001}). This finishes the proof of Corollary \ref{cor00}.
\end{proof}

\begin{proof}[Proof of Proposition \ref{propkazvas}]
The statement is a consequence of the following basic fact: A non-trivial characteristic class $0\neq \alpha_{L'} \in H^l(L';\Z_2)$ in degree $l\geq 1$ which satisfies $\alpha_{L_i}=0\in H^l(L_i;\Z_2)$ for every $i=1,\ldots, m$ gives rise to one dimension in $H^l(V,\partial_-V;\Z_2)$: $0\neq \alpha_V \in H^l(V;\Z_2)$ is in the image of the map $H^l(V,\partial_- V ;\Z_2)\to H^l(V;\Z_2)$. Clearly the image of $H^0(\partial_- V;\Z_2)\to H^1(V,\partial_- V;\Z_2)$ has dimension $m-1$. Hence, the bound in the proposition follows from Theorem \ref{relChekanov}.
\end{proof}


\subsection{Proofs of Theorem \ref{adcobthm} and \ref{relChekanov}}
The proof of Theorem \ref{adcobthm} is an easy consequence of Weinstein's Lagrangian tubular neighborhood theorem, so we will carry it out first.
\subsubsection{Proof of Theorem \ref{adcobthm}}
Consider two ordered, oriented tuples $(L_i)_{i=1}^m,(L'_j)_{j=1}^{m'}\subset \mathcal{L}$ as well as an \emph{oriented} Lagrangian cobordism $V: (L'_j)_j\rightsquigarrow (L_i)_i$. The non-oriented case follows the same line of ideas and will therefore not be mentioned further.
By Remark \ref{rem1} it suffices to compute $\chi(V,\partial_{+}V)$. To do this we may as well assume that $L_i\pitchfork L_j$ and $L'_i\pitchfork L'_j$ for $i\neq j$. This can be achieved by attaching small Lagrangian suspensions to the ends of $V$ \cite[Section 3.1 E]{Polterovich01}, which clearly does not change the topology of V. Now fix a Darboux-Weinstein neighborhood $U\subset \tilde{M}$ of $V$, so that we have a neighborhood $W\subset T^*V$ of the zero-section $V\subset T^*V$ as well as a symplectic identification $U\approx W$ which restricts to the identity on $V$. Denote by $B:=[-R,R]^2\subset \R^2$ a "box" such that $V$ is cylindrical outside $B\times M$. We may choose $U$ such that it is of product type outside $B\times M$. Denote by $g_{\R^2}$ the standard Euclidean metric on $\R^2$ and fix a Riemannian metric $g_M$ on $M$. Now fix a Morse function $f\in C^{\infty}(V)$ such that $-\nabla^g f$ points \emph{outwards} along $\partial_+V$ and \emph{inwards} along $\partial_-V$, where $g:=g_{\R^2}\oplus g_{M}$.
We further require that 
\begin{equation}
\label{eq01}
f(x,y,p)=\left\{
\begin{array}{ll}
\sigma_j^-(x), & \forall \ (x,y,p)\in (-\infty,-R)\times \{j\} \times L_j \\
\sigma_j^+(x), & \forall \ (x,y,p)\in (R,\infty) \times \{j\} \times L_j'
\end{array}
\right.
\end{equation}
where $\sigma_j^-$ and $\sigma_j^+$ have the form $\sigma_j^{\pm}(x)=\alpha x+\beta^{\pm}$ for a constant $\alpha<0$ and constants $\beta^{\pm}\in \R$. We extend $f$ to a (non-compactly supported and autonomous) Hamiltonian $F\in C^{\infty}(\tilde{M})$ by first extending it constantly along fibers in $W\approx U$ and then cutting off outside of a fiberwise convex neighborhood containing both $\Graph (df)$ and $\Graph(-df)$. If $f|_{V\cap (B\times M)}$ is $C^2$-small then we can achieve that $V_1:=\phi_{F}^1(V)$ is cylindrical outside $B\times M$ (see Figure \ref{fig2}). We assume that $f$ is chosen such that this is the case. We equip $V_1$ with the orientation induced by the diffeomorphism $\phi_F^1:V\to V_1$. We now have an identification $\Crit(f)\approx V\cap V_1$ and it is easy to check that
\[
(-1)^{|q|_f}=(-1)^{\frac{(n+1)n}{2}}I_q(V,V_1) \quad \forall \ q\in \Crit(f)\approx (V\cap V_1),
\]  
where $I_q(V,V_1)$ denotes the intersection index at $q$ with respect to the orientation $\tilde{\omega}^{n+1}$ of $\tilde{M}$ and $|q|_f$ denotes the Morse index. Since the Morse homology of $f$ is $H_*(V,\partial_+ V)$ we conclude that 
\begin{equation}
\label{eq2}
\chi(V,\partial_+ V)=\sum_{q\in \Crit(f)}(-1)^{|q|_f}=(-1)^{\frac{(n+1)n}{2}}I(V,V_1),
\end{equation}
where $I(V,V_1)$ denotes the intersection index of $(V,V_1)$ in $\tilde{M}$. Standard arguments in differential topology \cite{GuilleminPollack10} imply that if $\{\rho_t\}_{t\in [0,1]}$ is an isotopy of $\tilde{M}$ supported in $I\times \R \times M$ for some compact interval $I$ such that $V\pitchfork \rho_1(V_1)$ then $I(V,V_1)=I(V,\rho_1(V_1))$. Denote by $\eta:[-R-2,R+2]\to \R$ a compactly supported bump-function such that
\[
\eta'(t)\left\{  
\begin{array}{ll}
\geq 0 & \text{if}\ t\in [-R-2,-R-1] \\
\leq 0 & \text{if}\ t\in [R+1,R+2]
\end{array}
\right.
\] 
and $\eta=C$ on $[-R,R]$ for some large constant $C>0$. Denote by $\{\rho_t\}_{t\in [0,1]}$ the isotopy generated by $-\eta\partial_y$. If $C$ is large enough it is easy to see that $V\pitchfork \rho_1(V_1)$ and each intersection point $q=(x,y,p)\in V\cap \rho_1(V_1)\subset \R^2 \times M$ corresponds to some $p\in L_i\cap L_j$ for $i< j$ if $x<0$ or some $p\in L'_i\cap L'_j$ for $i< j$ if $x>0$. It suffices to compare $I_q(V,\rho_1(V_1))$ to $I_p(L_i,L_j)$ in the former case and to $I_p(L'_i,L'_j)$ in the latter case. Recall that we are using the convention that the orientation $\partial V$ inherits as a boundary of $V$ corresponds to the orientation of the $L_i$ and $L'_j$ via the convention $\partial V=(-\sqcup_iL_i) \sqcup (\sqcup_jL'_j)$. One therefore easily checks that if $q=(x,y,p)\in V\cap \rho_1(V_1)$ with $x<0$ and $p\in L_i\cap L_j$ for $i<j$ then
\[
I_q(V,\rho_1(V_1))=(-1)^{n+1}I_p(L_i,L_j).
\]
If on the other hand $q=(x,y,p)\in V\cap \rho_1(V_1)$ with $x>0$ and $p\in L'_i\cap L'_j$ for $i<j$ then
\[
I_q(V,\rho_1(V_1))=(-1)^n I_p(L'_i,L'_j)=-(-1)^{n+1} I_p(L'_i,L'_j).
\]
It follows that
\[
I(V,\rho_1(V_1))=
(-1)^{n+1} \left( \sum_{1\leq i <j\leq m}I(L_i,L_j)- \sum_{1\leq i <j\leq m'}I(L'_i,L'_j) \right)
\]
Together with (\ref{eq2}) this finishes the computation of $\chi(V,\partial_+ V)$ and therefore the proof of Theorem \ref{adcobthm}.

\subsubsection{Proof of Theorem \ref{relChekanov}}
From now on we consider a \emph{small} Lagrangian cobordism $V: (L'_1,\ldots , L'_{m'})\rightsquigarrow (L_1,\ldots , L_{m})$ which is spin. The $\Z_2$-case when $V$ is not assumed spin follows the same line of arguments and will not be mentioned further. Fix once and for all a $\tilde{J}\in \tilde{\mathcal{J}}_c$ such that $\mathcal{S}(V)<A(\tilde{M},V,\tilde{J})$ together with a small $\delta >0$ such that
\begin{equation}
\label{eq10}
\Delta:= \mathcal{S}(V)+2\delta <A(\tilde{M},V,\tilde{J}).
\end{equation} 
Fix also $R>0$ such that $\tilde{J}$ is supported in $[-R,R]^2\times M$ and set $B:=[-R,R]^2$.

\subsubsection{Shaping $V$}
\label{secshape}
We will reduce to the situation where $V$ is a Lagrangian with cylindrical ends satisfying the following conditions: There exists a $\beta \in C^{\infty}_c(\R; [0,\infty))$ such that  
\begin{equation}
\label{equ1}
\int_{-\infty}^{\infty}\beta(s) ds< \mathcal{S}(V)+\delta,
\end{equation}
and such that $V$ is cylindrical outside the set $Y\times M$ where 
\[
Y:=\{(x,y)\in \R^2\ :\ 0< y< \beta(x)  \}.
\]
I.e. we can write
\begin{equation}
\label{eq11}
V\cap (\R^2\backslash Y\times M)=\left( \bigsqcup_{j=1}^m I^-_j \times \{a_j^-\}\times L_j \right) \cup \left( \bigsqcup_{j=1}^{m'} I^+_j \times \{a_j^+\}\times L'_j \right)
\end{equation}
for intervals of the type $I_j^{-}=(-\infty, r^-_j]$ and $I_j^{+}=[r^+_j,\infty)$ and numbers $a_j^{\pm}\in \R$ satisfying $a^{\pm}_j\neq a^{\pm}_i$ for $i\neq j$. By perhaps increasing $R$ we may as well assume that $Y\subset B$.
\begin{figure}[htbp]
	\centering
	\begin{tikzpicture}[scale=1.8]
\draw [thick, ->] (-3,0) -- (3,0);
\node [right] at (3,0) {\small $\R$};
\node [below] at (2.6,0) {\small $R$};
\node [below] at (-2.6,0) {\small $-R$};

\draw [thick, fill=black] plot [smooth cycle, tension=1] coordinates {(0,0.1) (0.4,0.2) (0.8,0.1) (1.2,0.2) (1.5,0.3) (1.5,0.7) (1.3,1) (1,0.9) (0.5,1) (0,0.8) (-0.5,1) (-1,0.7) (-1.5,0.5) (-1.5,0.25) (-1,0.2)};
\draw [thick] (1.5,0.3) -- (3,0.3);
\draw [thick] (1.5,0.7) -- (3,0.7);
\draw [thick] (1.3,1) -- (3,1);
\draw [thick] (-3,0.5) -- (-1.5,0.5);
\draw [thick] (-3,0.25) -- (-1.5,0.25);
\node at (0,0.5) {\small  $\color{white} \pi(V)$};

\draw [thick, red] plot [smooth cycle, tension=0.5] coordinates {(0,0.2) (0.4,0.1) (0.7,0.2) (1.1,0.1) (1.3,0.2) (1.5,0.55) (1.5,0.85) (0.9,0.9) (0.5,1) (0,0.7) (-0.5,0.9) (-1,0.8) (-1.4,0.35) (-1.5,0.1) (-1,0.1)};
\draw [thick, red] (1.3,0.2) -- (3,0.2);
\draw [thick, red] (1.5,0.55) -- (3,0.55);
\draw [thick, red] (1.5,0.85) -- (3,0.85);
\draw [thick, red] (-3,0.35) -- (-1.4,0.35);
\draw [thick, red] (-3,0.1) -- (-1.5,0.1);

\draw [cyan, thick] (-1.9,0) to  [out=0, in=250]  (-1.6,0.5) to [out=70, in=200] (-1,0.9) to [out=20, in=130] (-0.1,0.9) to [out=310, in=180] (0.5,1.1) to [out=0, in=180] (1,0.95) to [out=0, in=180] (1.3,1.1) to [out=0, in=100] (1.65,0.8) to [out=280, in=180] (1.9,0);
\node [above] at (0.5,1.05) {\small \color{cyan} $\Graph(\beta)$};

\end{tikzpicture}
	\caption{The geometric picture here indicates the situation in the proof of both Theorem \ref{adcobthm} and \ref{relChekanov}, $\Graph(\beta)$ is only needed for the proof of the latter. The projection of $V$ to $\R^2$ is indicated in black and the graph of $\beta$ is indicated in blue. The red figure outlines the projection of $\phi_{F}^1(V)$ to $\R^2$.}
	\label{fig2}
\end{figure}
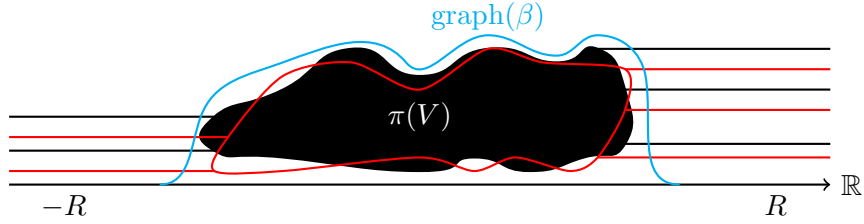
\begin{rem}
	The assumption that $\beta$ satisfying (\ref{equ1}) exists and (\ref{eq11}) is satisfied can be made without loss of generality: If $V$ does not satisfy these conditions then it is easy to find $\psi \in \Symp(\R^2,\omega_{\R^2})$ such that the Lagrangian $\tilde{V}:=\psi \times \id_M(V)\subset (\tilde{M},\tilde{\omega})$ satisfies them. All structures used in the proof below can then be conjugated by $\psi \times \id_M$ in order to transfer the results from $\tilde{V}$ to $V$. From now on we will therefore (without further mentioning) assume the existence of $\beta$ such that (\ref{equ1}) and (\ref{eq11}) are satisfied. 
\end{rem}

\subsubsection{Computing $H_*(V,\partial_{\pm}V; \F)$}
\label{subseqhom}
We will estimate the dimension of $H_*(V,\partial_+V; \F)$ using (a suitable adaption of) local Floer homology. To do this we fix a Morse function on $V$ as in the proof of Theorem \ref{adcobthm}. However, this time we need to be a bit more specific about the choice we make. More precisely, instead of (\ref{eq01}) $f$ is this time required to staisfy 
\begin{equation*}
f(x,y,p)=\left\{
\begin{array}{ll}
\sigma_j^-(x), & \forall \ (x,y,p)\in I_j^-\times \{a^-_j\} \times L_j \\
\sigma_j^+(x), & \forall \ (x,y,p)\in I_j^+\times \{a^+_j\} \times L_j',
\end{array}
\right.
\end{equation*}
where again $\sigma_j^-$ and $\sigma_j^+$ have the form $\sigma_j^{\pm}(x)=\alpha x+\beta^{\pm}$ for a constant $\alpha<0$ and constants $\beta^{\pm}\in \R$. Here $I^{\pm}_j$ denote the intervals from (\ref{eq11}). We extend $f$ exactly like before to a (non-compactly supported and autonomous) Hamiltonian $F\in C^{\infty}(\tilde{M})$, and by choosing $f$ such that $f|_{V\cap (Y\times M)}$ is $C^2$-small we achieve that $\phi_{F}^1(V)$ is cylindrical outside $Y\times M$. Note that $\phi^t_{F}(V) \pitchfork V$ for all $t \in [-1,1]\backslash \{0\}$. Consider now the strip $Z:=\R \times [0,1]$ with coordinates $z=(s,t)$. We study solutions $u\in C^{\infty}(Z,\tilde{M})$ to the problem
\begin{equation}
\label{eq1}
\left\{
\begin{array}{l}
\partial_su +\tilde{J}'_t(u)(\partial_tu-X_F(u))=0 \\
u(\R \times \{0,1\})\subset V
\end{array}
\right.
\end{equation} 
where $\tilde{J}'=\{\tilde{J}'_t\}_{t\in [0,1]}\subset \tilde{\mathcal{J}}$ denotes a smooth path of almost complex structures and $X_F$ denotes the symplectic gradient of $F$, defined by $i_{X_F}\tilde{\omega}=-dF$. Given a solution $u\in C^{\infty}(Z,\tilde{M})$ to (\ref{eq1}) we recall that its energy (with respect to $\tilde{J}'$) is defined by 
\[
E_{\tilde{J}'}(u):=\int_{-\infty}^{\infty}\int_0^1 \tilde{\omega}(\partial_su,\tilde{J}'_t(u)\partial_su)dtds.
\] 
Due to our non-compact setting we will need to impose some restrictions on $\tilde{J}'$ in order to obtain a well-defined Floer theory. In order to do so we first introduce a bit of notation. Denote by $K\subset \R^2$ a compact subset such that $Y\subset K$. We will then denote by $\tilde{\mathcal{J}}_F(K)$ the space of smooth paths of $\tilde{\omega}$-compatible almost complex structures $\tilde{J}'=\{\tilde{J}'_{t} \}_{t\in [0,1]}$ satisfying the condition that 
\begin{equation}
\label{eq24}
(\phi_F^t)^*\tilde{J}'_t|_{(\R^2 \backslash K)\times M}=(\mathfrak{i} \oplus J'_t )|_{(\R^2 \backslash K)\times M}
\end{equation}
for all $t\in [0,1]$, where $\{J'_{t}\}_{t\in [0,1]}$ is some smooth path of $\omega$-compatible almost complex structures on $M$. 
\begin{rem}
It was shown in \cite{BiranCornea14} and \cite{Bisgaard16} that a generic path in $\tilde{\mathcal{J}}_F(K)$ is regular for (\ref{eq1}) in the usual sense of Floer theory. By this we mean both that transversality is achieved for generic $\tilde{J}'\in \tilde{\mathcal{J}}_F(K)$ and that there is a compact subset of $\tilde{M}$ (depending on $F$ and $K$) which contains the image of every finite energy solutions to (\ref{eq1}) for any path $\{\tilde{J}'_t\}_{t\in [0,1]} \in \tilde{\mathcal{J}}_F(K)$. In the following we will need to consider variations of (\ref{eq1}) and therefore also variations of the almost complex structures. The precise equations (and therefore also transversality issues) we will face have been dealt with in practically identical settings before (see for instance \cite{BiranCieliebak02}, \cite{BiranCornea14}, \cite{Oh97} and references therein). The only non-standard aspect here is the compactness issue. However, in each case below compactness follows directly from the arguments in \cite{BiranCornea14}.
\end{rem}
Given a path $\{\tilde{J}'_t\}_{t\in [0,1]} \in \tilde{\mathcal{J}}_F(K)$ and $t\in [0,1]$ we can define $A_S(\tilde{M},\tilde{J}'_t)$ and $A_S(\tilde{M},V,\tilde{J}'_t)$ exactly as in Section \ref{secbubcob}. Due to (\ref{eq24}) it is an easy consequence of the of the open mapping theorem in complex analysis and the usual compactness argument that 
\[
A_S(\tilde{M},\tilde{J}'_t),\ A_S(\tilde{M},V,\tilde{J}'_t)>0.
\]
It follows that 
\[
A(\tilde{M},V,\tilde{J}'_t):=\min\{ A_S(\tilde{M},\tilde{J}'_t), A_S(\tilde{M},V,\tilde{J}'_t) \}>0 \quad \forall \ t\in [0,1].
\]
For the next lemma, note that the path $[0,1]\ni t\mapsto (\phi^t_F)_*\tilde{J}$ is an element of $\tilde{\mathcal{J}}_F(B)$. 
\begin{lemma}
\label{lem1}
If $F|_{B\times M}$ is sufficiently $C^{\infty}$-small then
\begin{enumerate}[a)]
	\item 
	$\Delta <A(\tilde{M},V,(\phi^t_F)_*\tilde{J})$ for all $t\in [0,1]$.
\end{enumerate}
Moreover, if in addition $\{ \tilde{J}'_{t}\}_{t\in [0,1]}\in \tilde{\mathcal{J}}_F(B)$ is sufficiently $C^{\infty}$-close to the path $t\mapsto (\phi^t_F)_*\tilde{J}$ then the following two conditions are satisfied:
\begin{enumerate}[a)]
\setcounter{enumi}{1}
	\item
	Given a solution $u\in C^{\infty}(Z,\tilde{M})$ to (\ref{eq1}) it holds that $E_{\tilde{J}'}(u)\leq \Delta$ if and only if $E_{\tilde{J}'}(u)\leq \delta$. Moreover, if $E_{\tilde{J}'}(u)<\infty $ then $u(Z)\subset U$ if and only if $E_{\tilde{J}'}(u)\leq \delta$.
	\item
	$\Delta <A(\tilde{M},V,\tilde{J}'_t)$ for all $t\in [0,1]$.
\end{enumerate} 
\end{lemma}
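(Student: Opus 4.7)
The plan is to prove (a) first, then (c) by an essentially identical argument, and finally the more delicate (b). The common engine is that $A(\tilde{M},V,\cdot)$ is lower-semicontinuous under $C^\infty$-perturbations of the almost complex structure, thanks to Gromov compactness, so the strict inequality $\Delta < A(\tilde{M},V,\tilde{J})$ survives along a small perturbation. The $2\delta$ of slack built into $\Delta$ will absorb both the Hamiltonian correction and the almost complex structure perturbation.

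For (a), I would argue by contradiction. Suppose there exist $F_n \to 0$ in $C^\infty(\tilde{M})$ and $t_n\in[0,1]$ with $A(\tilde{M},V,(\phi_{F_n}^{t_n})_*\tilde{J})\leq \Delta$. Then for each $n$ there is either a non-constant $(\phi_{F_n}^{t_n})_*\tilde{J}$-holomorphic disk with boundary on $V$, or a non-constant $(\phi_{F_n}^{t_n})_*\tilde{J}$-holomorphic sphere, of symplectic area at most $\Delta$. Since the perturbed almost complex structures agree with $\tilde{J}$ outside $B\times M$ for large $n$, the usual Gromov compactness in the Lagrangian-with-cylindrical-ends setting applies, and a subsequence converges to a non-constant $\tilde{J}$-holomorphic disk or sphere of area $\leq \Delta$, contradicting $\Delta < A(\tilde{M},V,\tilde{J})$. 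Part (c) is proved identically, this time letting $\{\tilde{J}'_t\}\in\tilde{\mathcal{J}}_F(B)$ range over a $C^\infty$-neighborhood of the path $t\mapsto(\phi_F^t)_*\tilde{J}$ and invoking (a) as the base inequality.

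For (b), the implication $E_{\tilde{J}'}(u)\leq \delta \Rightarrow E_{\tilde{J}'}(u)\leq \Delta$ is vacuous, so the content lies in the two converses. My strategy is a dichotomy based on whether $u(Z)\subset U$. In case $u(Z)\subset U$, transfer the problem to the standard Floer equation in $T^*V$ via the Weinstein identification $U\approx W$: the Floer strip $u$ becomes one for the pair $(V,\Graph(df))$, and the usual $C^2$-small Hamiltonian estimate bounds $E_{\tilde{J}'}(u)$ by $\mathrm{osc}(f)$, which is $\leq \delta$ once $F$ is chosen sufficiently small. In case $u(Z)\not\subset U$, pick an exit point and perform a bubbling rescaling; a subsequence of the rescalings converges to a non-constant $\tilde{J}'_t$-holomorphic disk with boundary on $V$ or sphere of area $\geq A(\tilde{M},V,\tilde{J}'_t) > \Delta$ by (c). This forces $E_{\tilde{J}'}(u) > \Delta > \delta$, establishing both converses simultaneously.

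The main obstacle is making the rescaling step in (b) watertight, in particular controlling the case where $u$ \emph{briefly} exits $U$ but then returns. This is where I expect the shadow to enter: projecting to $\R^2$ and exploiting that $\tilde{J}$ is standard at infinity, any piece of $u$ lying over $\R^2\setminus\ou(V)$ either is confined to a single fiber (by the open mapping theorem applied to $\pi\circ u$) or produces a genuine bubble via a Hofer-type monotonicity estimate; the $\tilde{\omega}$-area picked up over $\ou(V)$ is at most $\mathcal{S}(V)+o(1)$ as $\|F\|_{C^\infty}\to 0$. Any excess over this quantity would then demand a bubble of area at least $A(\tilde{M},V,\tilde{J}'_t)-\mathcal{S}(V)>2\delta$, while (a) and (c) rule out bubbles of area $\leq \Delta$ altogether. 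The bookkeeping of the Hamiltonian correction $\mathrm{osc}(F)$ and of the almost complex structure perturbation is the only routine-but-tedious piece left.
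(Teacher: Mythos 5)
Your treatment of (a) and (c) is correct and is exactly what the paper does: (a) is a standard Gromov-compactness argument using the fact that the relevant almost complex structures are split outside a fixed compact $B\times M$ (so that non-constant bubbles cannot escape), and (c) follows from (a) by the same lower-semicontinuity argument for the perturbed path $\{\tilde{J}'_t\}$. For (b) the paper simply invokes \cite[Propositions 17.1.2 and 17.1.3]{Oh152} together with the compactness argument; you instead try to re-derive that content, and this is where there is a genuine gap.

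Your dichotomy for (b) is the right skeleton. The sub-claim ``$u(Z)\subset U \Rightarrow E_{\tilde{J}'}(u)\leq\delta$'' is fine: inside the Weinstein neighborhood the area term vanishes, the energy identity reduces to $f(q_-)-f(q_+)$, and this is controlled by $\mathrm{osc}(f)\leq\|F\|$ for $F$ small. The problem is the other horn. You write ``pick an exit point and perform a bubbling rescaling; a subsequence of the rescalings converges to a non-constant $\tilde{J}'_t$-holomorphic disk \dots of area $\geq A(\tilde{M},V,\tilde{J}'_t)>\Delta$.'' But for a \emph{single fixed} finite-energy Floer strip $u$ with fixed $F$, there is no sequence of rescalings that produces a bubble: the gradient of $u$ is bounded, so rescaling just gives constant limits. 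Monotonicity would only give a lower energy bound $\sim c\,r^2$ depending on the geometry near the exit point, which is not comparable to the global threshold $A(\tilde{M},V,\tilde{J}'_t)$. The correct argument is indirect and goes through the adiabatic limit $F\to 0$: if there were sequences $F_n\to 0$, $\tilde{J}'^{(n)}\to (\phi^\cdot_{F_n})_*\tilde{J}$, and solutions $u_n$ with $E\leq\Delta$ escaping $U$, then Gromov compactness (with the Floer components degenerating into Morse trajectories on $V$, all of which stay in $U$) would force a non-constant bubble of area $>\Delta$ in the limit, contradicting $E(u_n)\leq\Delta$. It is this statement---with the smallness of $F$ used in a uniform, contradiction-via-sequence way---that Oh proves and the paper cites; as written your escape step does not establish it.

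A smaller but worth noting point: the final paragraph of your proposal, speculating that the shadow $\mathcal{S}(V)$ must enter to control ``brief exits'' from $U$, is a red herring. The shadow is irrelevant to Lemma \ref{lem1}; the lemma holds for any constant $\Delta<A(\tilde{M},V,\tilde{J})$, and $\Delta=\mathcal{S}(V)+2\delta$ is merely the specific choice made earlier. The shadow only plays a role later, when the Hamiltonian perturbation $\tilde H$ with Hofer norm $\leq\mathcal{S}(V)+\delta$ is introduced in the construction of the $\tilde\omega$-homotopy; it does not (and should not) appear in the proof of the present lemma.
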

\begin{proof}
The proof of a) is a standard compactness argument. Note that for any $\epsilon>0$ the path $[0,1]\ni t\mapsto (\phi^t_{\epsilon F})_*\tilde{J}$ is an element of $\tilde{\mathcal{J}}_{\epsilon F}(B)$. Therefore any $(\phi^t_{\epsilon F})_*\tilde{J}$-holomorphic sphere (respectively disk) into $\tilde{M}$ (respectively $(\tilde{M},V)$) whose $\R^2$-component is non-constant is contained in $B\times M$. Hence we can apply Gromov compactness. Given a), the points b) and c) are nothing but special cases of \cite[Propositions 17.1.2 and 17.1.3]{Oh152}. Note that in \cite{Oh152} the results are stated for closed Lagrangians (see also \cite{Oh96}, \cite{Oh97} and \cite{Chekanov98}). However, since Gromov compactness applies, the proof in our setting is identical to the ones in \cite{Oh152}.
\end{proof}

From now on we assume that the data $\tilde{J}'$ and $F$ is chosen according to Lemma \ref{lem1} and that $J'$ is regular for (\ref{eq1}). We will now discuss the Floer chain complex which we will be using for the proof of Theorem \ref{relChekanov}. Our main reference for Lagrangian Floer homology is Zapolsky's excellent paper \cite{Zapolsky15}, where the orientation issues for Floer homology are worked out in every detail. For more details on the construction of the Floer chain complex we therefore refer to \cite{Zapolsky15} (see also \cite{Seidel08}). We will denote by $\Omega_V$ the space of equivalence classes of pairs $\widetilde{\gamma}=[\gamma,\widehat{\gamma}]$ where $\gamma:([0,1],\{0,1\})\to (\tilde{M},V)$ is a smooth curve and $\widehat{\gamma}$ is a \emph{capping} for $\gamma$. The equivalence relation is given by identifying two cappings if and only if they have equal symplectic areas and equal Maslov indices. Elements $\widetilde{\gamma}=[\gamma,\widehat{\gamma}]\in \Omega_V$ for which $\gamma$ is an integral curve of $X_F$ are exactly the critical points of the action functional $\mathcal{A}_{F:V}:\Omega_V\to \R$, defined by
\[
\mathcal{A}_{F:V}(\widetilde{\gamma}=[\gamma,\widehat{\gamma}])=\int_0^1F(\gamma(t))dt - \int \widehat{\gamma}^*\tilde{\omega}.
\]
We define 
\begin{equation}
\label{eq12}
CF(F:V):=\bigoplus_{\widetilde{\gamma}\in \Crit(\mathcal{A}_{F:V})}C(\widetilde{\gamma}),
\end{equation}
where $C(\widetilde{\gamma})\cong \Z$ is generated by the two orientations of a suitable determinant line bundle of Fredholm operators defined on representatives of $\widehat{\gamma}$ as in \cite{Zapolsky15}. Note that since we identify cappings which have the same symplectic area and Maslov indices, $C(\widetilde{\gamma})$ is only well-defined once we have fixed a spin structure on $V$ so that we can identify the different rank 1 $\Z$-modules coming from different equivalent cappings \cite[Section 7.3]{Zapolsky15}.\footnote{In fact \cite{Zapolsky15} only requires that a relative $\text{Pin}^{\pm}$-structure for $V$ has been chosen. However, for our purposes it is more convenient to assume $V$ is spin, so we will require the choice of a spin structure.} We will therefore fix a spin structure on $V$ from now on. We also define $\Gamma:=\pi_2(\tilde{M},V)/\sim$ where $a\sim b$ if and only if $\tilde{\omega}(a)=\tilde{\omega}(b)$ and $\mu_V(a)=\mu_V(b)$.\footnote{Here $\mu_V$ denotes the Maslov class of $V$.} Then 
\begin{equation}
\label{eq45}
\tilde{\omega}\times \mu_V:\Gamma \to \R \times \Z
\end{equation}
is a monomorphism and $CF_*(F:V)$ is a $\Gamma$-module. In fact, by the construction of $F$, every $\widetilde{\gamma}=[\gamma, \widehat{\gamma}]\in \Crit(\mathcal{A}_{F:V})$ is naturally identified with a pair $[\gamma,\widehat{\gamma}]\approx (q,\widehat{q})$ where $q \in \Crit(f)\subset V$ and $\widehat{q}\in \Gamma$. We denote by $CF_{0}(F:V)\subset CF(F:V)$ the direct sum of the $C(\widetilde{\gamma})$ for which $\widetilde{\gamma}=[\gamma, \widehat{\gamma}]\approx (q,\widehat{q})\in \Crit(\mathcal{A}_{F:V})$ for which $\widehat{q}=0\in \Gamma$. From this point of view it is easy to see that 
\begin{equation}
\label{eq43}
CF_{0}(F:V)\otimes_{\Z}\Gamma \cong CF(F:V)
\end{equation}
as $\Gamma$-modules. A crucial ingredient for understanding Chekanov's construction is the \emph{length} between elements $\widetilde{\gamma}_-,\widetilde{\gamma}_+\in \Crit(\mathcal{A}_{F:V})$, defined by
\[
l(\widetilde{\gamma}_-,\widetilde{\gamma}_+):=\mathcal{A}_{F:V}(\widetilde{\gamma}_-)-\mathcal{A}_{F:V}(\widetilde{\gamma}_+)\in \R.
\]
It is important to note that $l$ is $\Gamma$ bi-invariant. We denote by $\mathcal{M}(F,\tilde{J}',\widetilde{\gamma}_-,\widetilde{\gamma}_+)$ the moduli space of finite-energy and \emph{unparametrized} solutions $u$ of (\ref{eq1}) satisfying the asymptotic conditions 
\[
\lim_{s\to -\infty}u_s=\widetilde{\gamma}_- \quad \& \quad \lim_{s\to \infty}u_s=\widetilde{\gamma}_+
\] 
in $\Omega_V$. For such $u$ we have the energy identity
\[
0\leq E_{\tilde{J}'}(u)= \tilde{\omega}(u)+f(q_-)-f(q_+)=l(\widetilde{\gamma}_-,\widetilde{\gamma}_+),
\]
where we set $\widehat{\gamma}_{\pm}=(q_{\pm},\widehat{q}_{\pm})$. In particular, if $\widehat{q}_{\pm}=0$ we see that $E_{\tilde{J}'}(u)=f(q_-)-f(q_+)$, so if the Hofer norm of $F$ satisfies 
\begin{equation}
\label{eq44}
|\! |F|_{B\times M}|\! |\leq \delta, 
\end{equation}
then automatically $u(Z)\subset U$ by Lemma \ref{lem1}. After perhaps scaling $F$ we can (and will) assume that $F$ has been chosen to satisfy (\ref{eq44}) from now on. We can then define a $\Gamma$-linear operator $\partial:CF(F:V)\to CF(F:V)$ by declaring that its $(\widetilde{\gamma}_-,\widetilde{\gamma}_+)$'th matrix element be 0 if either $l(\widetilde{\gamma}_-,\widetilde{\gamma}_+)>\delta$ or $\dim \mathcal{M}(F,\tilde{J}',\widetilde{\gamma}_-,\widetilde{\gamma}_+) \neq 0$ and
\begin{equation}
\label{eq30}
\sum_{u\in \mathcal{M}(F,\tilde{J}',\widetilde{\gamma}_-,\widetilde{\gamma}_+)}C(u):C(\widetilde{\gamma}_-)\to C(\widetilde{\gamma}_+)
\end{equation}
if $l(\tilde{\gamma}_-,\widetilde{\gamma}_+)\leq \delta$ and $\dim \mathcal{M}(F,\tilde{J}',\widetilde{\gamma}_-,\widetilde{\gamma}_+)=0$. Here $C(\widetilde{\gamma})$ denotes the $\Z$-linear operator defined in \cite[Section 3.8.1.1]{Zapolsky15}. We point out that $\partial$ being $\Gamma$-invariant is a non-trivial matter. This fact uses the choice of a spin structure for $V$ \cite[Section 7.3]{Zapolsky15}. Note that by the remarks above $\partial(CF_0(F:V))\subset CF_0(F:V)$, so we have an operator $\partial|_{CF_0(F:V)}:CF_0(F:V)\to CF_0(F:V)$. From the point of view of the identification 
(\ref{eq43}) we see that
\[
\partial|_{CF_0(F:V)}\otimes_{\Z}\id_{\Gamma}=\partial.
\]
because of $\Gamma$-linearity. We will therefore denote $\partial|_{CF_0(F:V)}$ simply by $\partial$. Similarly, given a field $\F$ we continue to denote the induced operator on $CF(F:V;\F):=CF(F:V)\otimes_{\Z}\F$ by $\partial$.
\begin{prop}
	$(CF_0(F: V;\F),\partial)$ is a chain complex (i.e. $\partial^2=0$) and its homology $HF_0(F:V;\F):=H(CF_0(F: V;\F),\partial)$ satisfies 
	\begin{equation}
	\label{eq104}
	HF_0(F: V;\F)\cong H_*(V,\partial_+ V;\F).
	\end{equation}
\end{prop}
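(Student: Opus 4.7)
The plan is to follow the local Floer approach of Chekanov \cite{Chekanov98}, using Lemma \ref{lem1} to rule out bubbling and to confine the relevant Floer trajectories to a Weinstein neighborhood of $V$, and then to identify the resulting chain complex with the Morse complex of $f$.

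First I would verify that $\partial$ preserves $CF_0(F:V;\F)$. For $\tilde{\gamma}_{\pm} = (q_{\pm},0) \in CF_0$, any Floer trajectory $u$ contributing to $[\partial]_{\tilde{\gamma}_-,\tilde{\gamma}_+}$ has energy equal to $l(\tilde{\gamma}_-,\tilde{\gamma}_+) = f(q_-) - f(q_+)$, which is bounded by the Hofer norm (\ref{eq44}), hence by $\delta$. By Lemma \ref{lem1}(b) such a $u$ is contained in $U$; since the negative end has trivial capping, the class realized by $u$ bounds a disk in $U$ and is therefore trivial in $\Gamma$. Consequently the positive end also has trivial capping and lies in $CF_0$.

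To show $\partial^2 = 0$ I would then examine the $1$-dimensional moduli spaces $\mathcal{M}(F,\tilde{J}',\tilde{\gamma}_-,\tilde{\gamma}_+)$ with $\tilde{\gamma}_{\pm} \in CF_0$ and $\mu(\tilde{\gamma}_-) - \mu(\tilde{\gamma}_+) = 2$. Every element has energy at most $\delta < \Delta$, so Lemma \ref{lem1}(c) forbids sphere and disk bubbling, and the Gromov--Floer compactification is obtained purely by adding broken trajectories. The non-negativity and additivity of the length $l$ then force every intermediate critical point $\tilde{\gamma}_0$ to satisfy $l(\tilde{\gamma}_-,\tilde{\gamma}_0), l(\tilde{\gamma}_0,\tilde{\gamma}_+) \leq \delta$, and by the first step $\tilde{\gamma}_0 \in CF_0$ as well. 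The usual signed boundary count with the coherent orientations of \cite[Section 3.8]{Zapolsky15} then yields $\partial^2 = 0$.

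It remains to compute $H_{*}(CF_0(F:V;\F),\partial)$. Since every relevant Floer trajectory lies in the Weinstein neighborhood $U \approx W \subset T^*V$, the problem reduces to Floer theory of the zero section of a cotangent bundle with a small autonomous Hamiltonian. For $F$ sufficiently $C^2$-small, the classical Floer-to-Morse comparison in this setting (see \cite[Section 17.1]{Oh152} and \cite{Oh97}) provides an orientation-preserving bijection between Floer trajectories between generators of $CF_0$ and negative gradient trajectories of $f$ on $V$. Thus $(CF_0(F:V;\F),\partial)$ is isomorphic to the Morse chain complex of $f$. Because $-\nabla f$ points outward along $\partial_+V$ and inward along $\partial_-V$, a standard computation identifies the Morse homology of $f$ with $H_{*}(V,\partial_+V;\F)$, yielding (\ref{eq104}).

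The main obstacle is the orientation matching in the last paragraph: one must verify that the spin-structure induced signs of \cite{Zapolsky15} correspond to the standard Morse-theoretic signs under the local Floer-to-Morse identification. This is delicate but has been carried out in essentially the same setting in the references cited, so for the purposes of the present proof I would simply quote it. Everything else is bookkeeping, made tractable by the $\Gamma$-bi-invariance of $l$ and by the a priori energy bound $\delta < \Delta$ coming from Lemma \ref{lem1}.
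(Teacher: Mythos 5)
Your proposal is correct and close to the paper's in spirit, but it diverges on the final step. The paper's own proof is terse: it cites Oh's book for the classical closed-Lagrangian result, flags the only new difficulty as preventing Floer strips from escaping along the cylindrical ends (handled by restricting to almost complex structures coming from $\tilde{\mathcal{J}}_F(B)$), and then establishes the isomorphism~(\ref{eq104}) via a PSS argument, citing \cite{BiranCornea13} and \cite{Bisgaard16}. You instead establish~(\ref{eq104}) by a direct Floer-to-Morse identification of trajectories for $C^2$-small $F$. Both routes are standard and valid; PSS is arguably more robust because it does not require the one-to-one trajectory correspondence, while your approach is more hands-on and makes the role of the Morse function $f$ (and of the boundary condition $-\nabla f$ pointing outward along $\partial_+V$) explicitly visible, which is where $H_*(V,\partial_+V;\F)$ actually comes from.

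Your opening paragraphs, showing $\partial(CF_0)\subset CF_0$ and $\partial^2=0$, are good and in fact supply detail the paper leaves implicit: the paper merely says ``Note that by the remarks above $\partial(CF_0(F:V))\subset CF_0(F:V)$.'' Your observation that a strip contained in $U$ realizes the trivial class in $\Gamma$, so trivial capping at one end forces trivial capping at the other, is exactly the point. One small omission: confinement to $U$ does not by itself rule out escape in the $x$-direction, since the Weinstein neighborhood $U$ of the cylindrical $V$ is itself noncompact. You should note (as the paper does) that this is excluded by the maximum principle coming from the choice $\tilde{J}'\in\tilde{\mathcal{J}}_F(B)$, which forces finite-energy strips into a compact region; this is the content of the paper's remark on regularity of paths in $\tilde{\mathcal{J}}_F(K)$. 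With that caveat added, the proof is complete.
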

\begin{proof}
	For closed Lagrangian submanifolds this is a classical result for whose proof we refer to \cite[Section 17.2]{Oh152}. The only non-standard aspect when checking $\partial^2=0$ in our situation is making sure that Floer trajectories cannot "escape" along the non-compact ends corresponding to $L_1,...,L_m,L_1',...,L_{m'}'$. This is achieved by simply choosing almost complex structures which are the restriction of paths form $\tilde{\mathcal{J}}_F(B)$ to $U\subset T^*V_0$. One can check this using the arguments from \cite{BiranCornea13}. Checking (\ref{eq104}) can now be done using a PSS argument. This has been carried out in the setting of Lagrangian cobordisms in \cite{BiranCornea13} or \cite{Bisgaard16}. Those accounts easily adapt to our setting.
\end{proof}


\subsubsection{Chekanov's homotopy lemma}
The inequality in Theorem \ref{relChekanov} will follow from an observation due to Chekanov. We will need a slightly modified version of his beautiful result, so we cover the details we need here. Consider a subgroup $A\leq \R \times \Z$ and denote by $\lambda:A\to \R$ the homomorphism given by projection to the first coordinate. Given a field $\F$ we consider the group ring $\Lambda:=\F [A]$. We write an element of $\Lambda$ as a finite sum
\begin{equation}
\label{eq29}
\sum_k f_kT^{a_k}
\end{equation}
where $a_k \in A$ and $f_k \in \F$. We note that $\Lambda$ is both a commutative ring with $1\neq 0$ as well as a $\F$-vector space. Consider also the natural positive and negative $\F$-subspaces
\begin{align*}
	\Lambda_{\pm}:=&\left\{ \sum_i f_iT^{a_i} \ : \ \pm \lambda(a_i)\geq 0 \ \forall \ i \right\}
\end{align*}
together with their $\F$-linear "projections"
\begin{align*}
P_{\pm}:\Lambda &\to \Lambda_{\pm} \\ 
\sum_k f_kT^{a_k} & \mapsto \sum_{k: \pm \lambda(a_k)\geq 0} f_kT^{a_k}
\end{align*} 
Given a finite dimensional $\F$-vector space $W$ we obtain a free $\Lambda$-module $W\! \otimes_{\F}\! \Lambda$ with $\ran_{\Lambda}( W\! \otimes_{\F}\! \Lambda )= \dim_{\F}(W)$. Considering the $\F$-linear subspace $W_0:= W\otimes_{\F}\F[\ker \lambda] \leq W\! \otimes_{\F}\! \Lambda$ we have natural positive and negative $\F$-linear subspaces
\[
W_{\pm}:=\Lambda_{\pm}\cdot W^0\subset W\! \otimes_{\F}\! \Lambda,
\]
together with the associated $\F$-linear "projection" maps
\[
\id_W \otimes_{\F} P_{\pm}:W\! \otimes_{\F}\! \Lambda\to W^{\pm},
\]
which we (by abuse of notation) continue to denote by $P_{\pm}$. Suppose now that $(W,\partial)$ is a finite dimensional differential $\F$-vector space. Denoting by $\partial':=\partial \! \otimes_{\F}\! \id_{\Lambda}$ the induced differential on $W\! \otimes_{\F}\! \Lambda$ we have a free and finitely generated $\Lambda$-differential module $(W\! \otimes_{\F}\! \Lambda, \partial')$. Following Chekanov \cite{Chekanov98} we say that two $\Lambda$-linear maps 
\[
f,g:W\! \otimes_{\F}\! \Lambda \to W\! \otimes_{\F} \! \Lambda
\]
are $\lambda$-homotopic if there exists a $\Lambda$-linear map $h:W\! \otimes_{\F}\! \Lambda \to W\! \otimes_{\F} \! \Lambda$ such that 
\begin{equation}
\label{chek1}
P_-(f-g-h \partial' -\partial' h)P_+=0 
\end{equation}
as a map $W\! \otimes_{\F}\! \Lambda \to W\! \otimes_{\F} \! \Lambda$. The version of Chekanov's homotopy lemma which we need is the following. Chekanov's original formulation seems to differ slightly from the one we use here, but his proof easily carries over to our setup.
\begin{lemma}[\cite{Chekanov98}]
\label{lemchek}
Denote by $N$ a free, finitely generated $\Lambda$-module and by $(W,\partial)$ a finite dimensional differential $\F$-vector space. If there exist $\Lambda$-linear maps $\Phi:W\! \otimes_{\F}\! \Lambda \to N$ and $\Psi:N \to W\! \otimes_{\F}\! \Lambda$ such that $\Psi \Phi$ is $\lambda$-homotopic to the identity then 
\[
\dim_{\F}H(W,\partial)\leq \ran_{\Lambda}(N).
\]
\end{lemma}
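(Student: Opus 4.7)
The strategy is to argue by contradiction: I will show that if $\alpha_1,\ldots,\alpha_k \in W$ are cycles representing an $\F$-basis of $H(W,\partial)$, then the elements $e_i := \Phi(\alpha_i \otimes 1) \in N$ are $\Lambda$-linearly independent. Since $\Lambda = \F[A]$ is the group ring of a torsion-free abelian group and therefore an integral domain (embedded in its fraction field), a $\Lambda$-linearly independent family in the free $\Lambda$-module $N$ has at most $\ran_\Lambda(N)$ elements; this yields the desired bound $\dim_\F H(W,\partial) = k \leq \ran_\Lambda(N)$.

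To establish the independence of the $e_i$, suppose there is a nontrivial relation $\sum_i \mu_i e_i = 0$ in $N$. I first normalize: writing $\mu_i = \sum_l f_{il} T^{a_{il}}$, let $m := \min_{i,l} \lambda(a_{il})$ (minimum over the nonzero terms) and pick $b \in A$ with $\lambda(b) = -m$, which exists because $\lambda(A) \leq \R$ is a subgroup containing every $\lambda(a_{il})$. After multiplying the relation by $T^b$ and replacing each $\mu_i$ by $T^b\mu_i$, I may assume that every $\mu_i$ lies in $\Lambda_+$ and that $P_0\mu_{i_0} \neq 0$ for the index $i_0$ achieving the minimum $m$.

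Now form $\beta := \sum_i \alpha_i \otimes \mu_i \in V_+$ inside $V := W \otimes_\F \Lambda$. Since the $\alpha_i$ are cycles, $\partial'\beta = 0$, and by $\Lambda$-linearity $\Phi(\beta) = \sum_i \mu_i e_i = 0$, so $\Psi\Phi(\beta) = 0$. Plugging $\beta$ into the $\lambda$-homotopy identity (\ref{chek1}) with $f = \Psi\Phi$ and $g = \id$, using $P_+\beta = \beta$ and $h\partial'(\beta) = 0$, I obtain
\[
P_-\bigl(-\beta - \partial' h(\beta)\bigr) = 0, \qquad \text{i.e.,}\qquad \beta + \partial' h(\beta) \in \ker P_- = V_{>0}.
\]
Applying the $\F$-linear projection $P_0$, which annihilates $V_{>0}$ and commutes with $\partial' = \partial \otimes \id_\Lambda$ (because $P_0$ acts only on the $\Lambda$-factor while $\partial'$ acts only on the $W$-factor), gives $P_0\beta = -\partial'\bigl(P_0 h(\beta)\bigr)$ in the subcomplex $W_0 = W \otimes \Lambda^0$ equipped with the differential $\partial \otimes \id_{\Lambda^0}$.

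Hence $\sum_i \alpha_i \otimes P_0\mu_i = P_0\beta$ is a boundary in $(W_0,\partial \otimes \id_{\Lambda^0})$, whose homology is $H(W,\partial) \otimes_\F \Lambda^0$ by the (trivial) K\"unneth formula, since $\Lambda^0$ is a flat $\F$-module concentrated in degree zero with no differential. The $\F$-linear independence of $[\alpha_1],\ldots,[\alpha_k]$ in $H(W,\partial)$ now forces $P_0\mu_i = 0$ for every $i$, contradicting $P_0\mu_{i_0} \neq 0$. The most delicate point, and the one deserving the most care, is the normalization step: the $\lambda$-homotopy hypothesis is one-sided and only constrains behavior on $V_+$, so without first shifting the relation by $T^b$ to place the minimal $\lambda$-weight exactly on the zero level of $\lambda$, the projection $P_0$ would yield only the tautology that the $\mu_i$ have no $\lambda = 0$ part, which is not a contradiction. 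Everything else in the argument is routine once the homotopy is unpacked on $V_+$.
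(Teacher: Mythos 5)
Your proof is correct, and your identification of the normalization step as the crux is apt. Note that the paper does not actually prove Lemma~\ref{lemchek}: it only cites \cite{Chekanov98} and asserts that Chekanov's proof carries over to the present formulation. Your argument supplies exactly the expected Chekanov-style filtration proof: take cycles $\alpha_1,\ldots,\alpha_k$ representing a basis of $H(W,\partial)$, normalize a putative $\Lambda$-linear relation among the $\Phi(\alpha_i\otimes 1)$ by a monomial $T^b$ so that all coefficients lie in $\Lambda_+$ with minimal $\lambda$-weight exactly $0$, feed the resulting cycle $\beta\in W_+$ through the one-sided identity (\ref{chek1}) to obtain $\beta+\partial' h(\beta)\in\ker P_-$, and project to the $\lambda=0$ level $W\otimes_\F\F[\ker\lambda]$ to force all $P_0\mu_i$ to vanish, a contradiction. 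The two auxiliary facts you lean on — that $P_0$ commutes with $\partial'$ and that $\F[\ker\lambda]$ is flat over $\F$ so that $H(W\otimes_\F\F[\ker\lambda])\cong H(W)\otimes_\F\F[\ker\lambda]$ — are correct, as is the observation that $\Lambda=\F[A]$ is an integral domain (because $A\leq\R\times\Z$ is torsion-free abelian, hence orderable), which is what lets you cap a $\Lambda$-linearly independent family in $N$ by $\ran_\Lambda N$.
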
 


\subsubsection{An $\tilde{\omega}$-homotopy}
Viewed through (\ref{eq45}) $\Gamma$ will play the role of $A$ above. So, $\Lambda=\F[\Gamma]$ and $\lambda$ is simply given by $\tilde{\omega}:\Gamma \to \R$. We point out now that, with coefficients in a field $\F$, (\ref{eq43}) translates into an isomorphism of $\Lambda$-modules
\[
CF_{0}(F:V;\F) \otimes_{\F}\Lambda \cong CF(F:V;\F)
\]

Fix now $C,\epsilon >0$ and choose two functions $\varphi_1,\varphi_2 \in C^{\infty}(\R;[0,1])$ satisfying
\[
\varphi_1(y)= 
\left\{
\begin{array}{ll}
1, & \text{for}\ |y|<C \\
0, & \text{for}\ |y|\geq C+1
\end{array}
\right.
\quad
\&
\quad 
\varphi_2(x)= 
\left\{
\begin{array}{ll}
1, & \text{for}\ x<R \\
0, & \text{for}\ x\geq R+C
\end{array}
\right.
\]
as well as $|\varphi_2'(x)|\leq \epsilon$ for all $x\in \R$ and consider $H\in C^{\infty}_c(\tilde{M})$ defined by\footnote{Recall the choice of $\beta\in C^{\infty}_c(\R)$ made in Section \ref{secshape}.}
\[
H(x,y,p)=\left(\int_{-\infty}^x -\beta(s) ds \right)\varphi_1(y)\varphi_2(x).
\]
We then define the time-dependent and \emph{compactly supported} Hamiltonian $\tilde{H}\in C^{\infty}_c([0,1]\times \tilde{M})$ by $\tilde{H}_t(z,p)=H(\phi_F^{1-t}(z,p))$ and note that $\tilde{H}$ has Hofer norm
\[
|\! |\tilde{H}|\! |=|\! |H|\! |\leq \mathcal{S}(V)+\delta.
\]
The time-dependent Hamiltonian $G_t(z,p):=F(z,p)+\tilde{H}_t(z,p)\in C^{\infty}([0,1]\times \tilde{M})$ generates the flow $\phi_G^t=\phi_F^{t-1}\phi_H^t \phi_F^1$. Since $\phi_G^1= \phi_H^1\phi_F^1$ it is easy to see from the choices made in Section \ref{secshape} that $CF(G:V;\F)$ is a $\Lambda$-module of rank 
\[
\ran_{\Lambda}CF(G:V;\F)=\sum_{1\leq i <j\leq m}\#(L_i\cap L_j) + \sum_{1\leq i <j\leq m'}\#(L'_i\cap L'_j)
\]
provided that $C>0$ is chosen large enough and $\epsilon>0$ small enough. Therefore (\ref{eq3}) follows from Lemma \ref{lemchek} and
\begin{prop}
\label{prop1}
	There exist $\Lambda$-linear maps 
	\begin{align*}
	\Phi:&CF(F:V;\F) \to CF(G:V;\F) \\
	\Psi:& CF(G:V;\F) \to CF(F:V;\F) 
	\end{align*}
	whose composition $\Psi \Phi$ is $\tilde{\omega}$-homotopic to the identity.
\end{prop}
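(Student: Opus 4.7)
The plan is to construct $\Phi$ and $\Psi$ as Floer continuation maps associated to two opposite monotone homotopies between $F$ and $G$, and then to establish the $\tilde\omega$-homotopy between $\Psi\Phi$ and $\id$ via a standard parametric moduli space argument, following the template of Chekanov \cite{Chekanov98} in our cobordism setting.

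\emph{Construction of $\Phi$ and $\Psi$.} Since $G=F+\tilde H\leq F$ (because $H\leq 0$), choose a non-increasing cut-off $\chi:\R\to[0,1]$ and set $K^+_{s,t}:=F+(1-\chi(s))\tilde H_t$, so that $K^+$ interpolates monotonically from $F$ (for $s\ll 0$) to $G$ (for $s\gg 0$); pick a matching path in $\tilde{\mathcal{J}}_{K^+_s}(B')$ for a slightly enlarged box $B'\supset B$. The Hofer-type energy identity
\[
E(u)=l(\widetilde{\gamma}_-,\widetilde{\gamma}_+)+\iint \partial_s K^+_{s,t}(u)\,dt\,ds
\]
together with $\partial_s K^+_s\leq 0$ gives $E(u)\leq l$, hence $l\geq 0$ on nonzero matrix elements, and any trajectory of length $\leq\delta$ automatically has $E\leq\delta\leq\Delta$, ruling out bubbling by Lemma \ref{lem1}(c). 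Define $\Phi$ by the usual signed count of index-zero such trajectories (with Zapolsky's coherent orientations \cite{Zapolsky15}), extended $\Lambda$-linearly from $CF_0(F:V;\F)$. Construct $\Psi$ symmetrically from a monotone increasing homotopy $K^-:G\rightsquigarrow F$; here $\iint\partial_s K^-_{s,t}(u)\,dt\,ds\leq |\!|\tilde H|\!|\leq\mathcal{S}(V)+\delta$, so trajectories of length $\leq\delta$ still satisfy $E\leq\Delta$.

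\emph{Construction of $h$.} Consider a smooth family $\{(K^R,\tilde J^R)\}_{R\in[0,\infty)}$ with $K^0\equiv F$ (whose associated continuation map is $\id_{CF(F:V;\F)}$) and with $K^R$, as $R\to\infty$, breaking into the concatenation of $K^+$ and $K^-$; engineer each $K^R$ to be piecewise monotone in $s$ so that every contributing parametric trajectory satisfies $E(u)\leq l+|\!|\tilde H|\!|$. Signed count of index-$(-1)$ parametric solutions of length $\leq\delta$ then defines a $\Lambda$-linear operator $h:CF(F:V;\F)\to CF(F:V;\F)$; compactifying the resulting one-dimensional parametric moduli spaces and applying the standard gluing theorem yields
\[
\Psi\Phi-\id-\partial h-h\partial=0
\]
on every matrix element $(\widetilde{\gamma}_-,\widetilde{\gamma}_+)$ with $l(\widetilde{\gamma}_-,\widetilde{\gamma}_+)\leq\delta$ (above this threshold the identity fails only because of the cutoff imposed to stay below the bubbling threshold $\Delta$).

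\emph{Vanishing of $P_-(\cdot)P_+$.} Writing $\widetilde{\gamma}_\pm=(q_\pm,\widehat{q}_\pm)$, the mixed matrix elements selected by $P_-(\cdot)P_+$ satisfy $\tilde\omega(\widehat{q}_-)\geq 0$ and $\tilde\omega(\widehat{q}_+)\leq 0$, so
\[
l(\widetilde{\gamma}_-,\widetilde{\gamma}_+)=f(q_-)-f(q_+)-\tilde\omega(\widehat{q}_-)+\tilde\omega(\widehat{q}_+)\leq 2\,|\!|f|\!|.
\]
After further scaling $F$ in Section \ref{subseqhom} so that $|\!|f|\!|\leq\delta/2$, every such matrix element has $l\leq\delta$, the identity from the previous paragraph applies unconditionally, and $P_-(\Psi\Phi-\id-\partial h-h\partial)P_+=0$ follows. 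The key technical point will be engineering the family $\{K^R\}$ so that every parametric trajectory of length $\leq\delta$ remains bubble-free; this is accomplished by the piecewise monotone design of $K^R$ together with $\tilde J^R\in\tilde{\mathcal{J}}_{K^R_s}(B')$, after which transversality and Gromov compactness are standard following \cite{BiranCornea14, Bisgaard16}, and sign bookkeeping follows \cite{Zapolsky15}.
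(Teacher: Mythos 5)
Your construction of $\Phi$, $\Psi$, and $h$ through a monotone continuation homotopy, its reverse, and a parametric family, together with the energy estimates, the appeal to Lemma~\ref{lem1} to rule out bubbling below $\Delta$, and the reduction of the $P_-(\cdot)P_+$ condition to the length bound $l\leq\delta$ on mixed matrix elements, is in essence the paper's proof, written there with cut-off functions $\rho_\pm$ and $\rho_\tau$ instead of your $K^\pm$ and $K^R$.

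There is, however, a concrete gap in the length cutoff you impose on $\Phi$. You count continuation trajectories with $l_+\leq\delta$, whereas the paper uses the strictly larger cutoff $l_+\leq\delta-b_-$, where $b_-:=\int_0^1\min_{\tilde M}\tilde H_t\,dt\leq 0$; since $\tilde H\leq 0$ one has $b_+=0$ and $\delta-b_-=\delta+|\!|\tilde H|\!|$. The asymmetry between the $\Phi$- and $\Psi$-cutoffs (your $\Psi$-cutoff $\delta$ does agree with the paper's $\delta+b_+$) is forced by the gluing argument: at the $\tau\to\infty$ end of the compactified one-dimensional family $\mathcal{M}^\rho(\widetilde{\gamma}_-,\widetilde{\gamma}_+)$ with $l(\widetilde{\gamma}_-,\widetilde{\gamma}_+)\leq\delta$, a broken configuration $(u_1,u_2)$ through an intermediate $\widetilde{\gamma}_0\in\Crit(\mathcal{A}_{G:V})$ satisfies only $l_-\geq -|\!|\tilde H|\!|$ (from $0\leq E(u_2)\leq l_-+|\!|\tilde H|\!|$), so $l_+=l-l_-$ can take any value up to $\delta+|\!|\tilde H|\!|$. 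Configurations with $l_+\in(\delta,\delta+|\!|\tilde H|\!|]$ are genuine boundary points of the compactification, but they are invisible to your $\Phi$, so the count $\id-\Psi\Phi-h\partial-\partial h$ is not zero on such matrix elements. Note that your own improved bound $E(u_1)\leq l_+$, coming from monotonicity of $K^+$, shows that enlarging the $\Phi$-cutoff to $\delta+|\!|\tilde H|\!|$ still keeps all trajectories below $\Delta=\mathcal{S}(V)+2\delta$, so the fix lies inside your framework; but the cutoff you actually stated is too small, and this length bookkeeping is exactly the crux that the paper defers to the estimates at the end of Chekanov's argument rather than a routine adjustment.
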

\begin{proof}[Proof of Proposition \ref{prop1}]
The following is basically Chekanov's proof from \cite{Chekanov98} (see also \cite{Oh97}). Chekanov constructed $\Psi$, $\Phi$ together with a suitable $\tilde{\omega}$-homotopy using Floer's continuation principle. Fix two monotone functions $\rho_{\pm}\in C^{\infty}(\R ;[0,1])$ with
\[
\rho_+(s)=\left\{
\begin{array}{ll}
0, & \text{if}\ s\leq -1 \\
1, & \text{if}\ s\geq 1
\end{array}
\right.
\quad \& \quad 
\rho_-(s)=\left\{
\begin{array}{ll}
1, & \text{if}\ s\leq -1 \\
0, & \text{if}\ s\geq 1.
\end{array}
\right.
\]
Consider also the positive and negative parts of $|\! |\tilde{H}|\! |$
\[
b_+:=\int_0^1 \max_{\tilde{M}}(\tilde{H}_t)\ dt \geq 0 \quad \& \quad b_-:=\int_0^1 \min_{\tilde{M}}(\tilde{H}_t)\ dt\leq 0,
\]
so that $|\! |\tilde{H}|\! |=b_+-b_-$ and choose $\tilde{R}>R$ such that $\supp(\tilde{H}_t)\subset \tilde{B}\times M$ for all $t\in [0,1]$, where $\tilde{B}:=[-\tilde{R},\tilde{R}]$. Now consider for $u\in C^{\infty}(Z;\tilde{M})$ the problem
\begin{equation}
\tag{$P_{\pm}$}
\label{eq13}
\left\{
\begin{array}{l}
\partial_su +\tilde{I}^{\pm}_{(s,t)}(u)(\partial_tu-X_F(u)-\rho_{\pm}(s)X_{\tilde{H}_t}(u))=0 \\
u(\R \times \{0,1\})\subset V
\end{array}
\right.
\end{equation}
where $\{\tilde{I}^{\pm}_z\}_{z\in Z}$ is a smooth $Z$-family of $\tilde{\omega}$-compatible almost complex structures satisfying 
\begin{equation}
\label{eq101}
(\phi_F^t)^*\tilde{I}^{\pm}_{(s,t)}|_{(\R^2 \backslash \tilde{B})\times M}= (\mathfrak{i}\oplus I^{\pm}_{(s,t)})|_{(\R^2 \backslash \tilde{B})\times M}
\end{equation}
for some $Z$-family of $\omega$-compatible almost complex structures $\{ I^{\pm}_{z} \}_{z\in Z}$ on $M$. We additionally require that there exists a constant $0<C<\infty$ such that
\begin{equation}
	\label{eq20}
	\tilde{I}^{\pm}_{(s,t)}=\left\{
	\begin{array}{ll}
	\tilde{J}'_t, & \text{if $\pm s<-C$ and/or $t\in \{0,1\}$} \\
	\tilde{J}^{\infty}_t, & \text{if $\pm s>C$ and/or $t\in \{0,1\}$},
	\end{array}
	\right.
\end{equation}
where $\{\tilde{J}^{\infty}_t\}_{t\in [0,1]} \in \tilde{\mathcal{J}}_F(\tilde{B})$ satisfies the condition that $\tilde{J}^{\infty}_t=\tilde{J}'_t$ for $t\in \{0,1\}$. Exactly as in Lemma \ref{lem1} one sees that $\{\tilde{I}^{\pm}_z\}_{z\in Z}$ may be chosen such that it is regular in the usual sense of Floer theory and such that 
\begin{equation}
\label{eq14}
\Delta <A(\tilde{M},V,\tilde{I}^{\pm}_z)\quad \forall \ z\in Z 
\end{equation}
We will therefore assume that this is the case from now on. Given $\widetilde{\gamma}_-\in \Crit(\mathcal{A}_{F:V})$ and $\widetilde{\gamma}_+\in \Crit(\mathcal{A}_{G:V})$ we define the length
\[
l_+(\widetilde{\gamma}_-,\widetilde{\gamma}_+):=\mathcal{A}_{F:V}(\widetilde{\gamma}_-)-\mathcal{A}_{G:V}(\widetilde{\gamma}_+)\in \R
\]
and denote by $\mathcal{M}^+(\widetilde{\gamma}_-,\widetilde{\gamma}_+)$ the space of finite energy solutions $u$ to ($P_+$) enjoying the asymptotic conditions $\lim_{s\to -\infty}u_s=\widetilde{\gamma}_-$ and $\lim_{s\to \infty}u_s=\widetilde{\gamma}_+$ in $\Omega_V$. Since $\tilde{I}^+$ is regular $\mathcal{M}^+(\widetilde{\gamma}_-,\widetilde{\gamma}_+)$ is a smooth manifold. Given $u\in \mathcal{M}^+(\widetilde{\gamma}_-,\widetilde{\gamma}_+)$ one integrates by parts to see that 
\begin{equation}
\label{eq15}
E_{\tilde{I}^+}(u)= l_+(\widetilde{\gamma}_-,\widetilde{\gamma}_+)+\int_{-\infty}^{\infty}\int_0^1 \dot{\rho}_+(s)\tilde{H}_t(u)\ dtds.
\end{equation}
If $l_+(\widetilde{\gamma}_-,\widetilde{\gamma}_+)\leq \delta -b_-$ it follows from this and monotonicity of $\rho_+$ that 
\begin{align*}
E_{\tilde{I}^+}(u)\leq \delta -b_- +\int_{0}^{1}\max_{\tilde{M}}(\tilde{H}_t) \ dt= |\! | \tilde{H}|\! | +\delta \leq \mathcal{S}(V)+2\delta = \Delta, 
\end{align*}
for every $u\in \mathcal{M}^+(\widetilde{\gamma}_-,\widetilde{\gamma}_+)$. In particular, in this case, (\ref{eq14}) implies that no bubbleing occurs along $\mathcal{M}^+(\widetilde{\gamma}_-,\widetilde{\gamma}_+)$. If in addition $\dim \mathcal{M}^+(\widetilde{\gamma}_-,\widetilde{\gamma}_+)=0$ it follows from regularity of $\tilde{I}^+$ that $\mathcal{M}^+(\widetilde{\gamma}_-,\widetilde{\gamma}_+)$ is compact. Hence, we can define $\Phi:CF(F:V;\F)\to CF(G:V;\F)$ as the unique $\Lambda$-linear operator whose $(\widetilde{\gamma}_-,\widetilde{\gamma}_+)$'th matrix element equals 0 if $\dim \mathcal{M}^+(\widetilde{\gamma}_-,\widetilde{\gamma}_+)\neq 0$ or $l_+(\widetilde{\gamma}_-,\widetilde{\gamma}_+)> \delta -b_-$ and otherwise equals
\begin{equation}
\label{eq17}
\sum_{u\in \mathcal{M}^+(\widetilde{\gamma}_-,\widetilde{\gamma}_+)}C(u)\otimes_{\Z}\id_{\F}:C(\widetilde{\gamma}_-)\otimes_{\Z}\F \to C(\widetilde{\gamma}_+)\otimes_{\Z}\F
\end{equation}
as defined in \cite[Section 3.8.1]{Zapolsky15}. Similarly, given $\widetilde{\gamma}_- \in \Crit(\mathcal{A}_{G:V})$ and $\widetilde{\gamma}_+ \in \Crit(\mathcal{A}_{F:V})$ we consider the quantity 
\[
l_-(\widetilde{\gamma}_-,\widetilde{\gamma}_+):=\mathcal{A}_{G:V}(\widetilde{\gamma}_-)-\mathcal{A}_{F:V}(\widetilde{\gamma}_+),
\]
and denote by $\mathcal{M}^-(\widetilde{\gamma}_-,\widetilde{\gamma}_+)$ the space of finite energy solutions $u$ to ($P_-$) satisfying $\lim_{s\to -\infty}u_s=\widetilde{\gamma}_-$ and $\lim_{s\to \infty}u_s=\widetilde{\gamma}_+$ in $\Omega_V$. Again $\mathcal{M}^-(\widetilde{\gamma}_-,\widetilde{\gamma}_+)$ is a smooth manifold and integration by parts yields  
\begin{equation}
\label{eq16}
E_{\tilde{I}^-}(u)=l_-(\widetilde{\gamma}_-,\widetilde{\gamma}_+)+\int_{-\infty}^{\infty}\int_0^1 \dot{\rho}_-(s)\tilde{H}_t(u)\ dtds.
\end{equation}
for every $u\in \mathcal{M}^-(\widetilde{\gamma}_-,\widetilde{\gamma}_+)$. In particular if $l_-(\widetilde{\gamma}_-,\widetilde{\gamma}_+)\leq \delta +b_+$ then 
\begin{align*}
E_{\tilde{I}^{-}}(u)\leq \delta+b_+  - \int_{0}^{1}\min_{\tilde{M}}(\tilde{H}_t) \ dt\leq \Delta,
\end{align*}
for every $u\in \mathcal{M}^-(\widetilde{\gamma}_-,\widetilde{\gamma}_+)$, so no bubbleing occurs along $\mathcal{M}^-(\widetilde{\gamma}_-,\widetilde{\gamma}_+)$. We can therefore define $\Psi :CF(G:V;\F)\to CF(F:V;\F)$ as the unique $\Lambda$-linear map whose $(\widetilde{\gamma}_-,\widetilde{\gamma}_+)$'th matrix element equals 0 if $\dim \mathcal{M}^-(\widetilde{\gamma}_-,\widetilde{\gamma}_+) \neq 0$ or $l_-(\widetilde{\gamma}_-,\widetilde{\gamma}_+)> \delta +b_+$ and otherwise equals
\[
\sum_{u\in \mathcal{M}^-(\widetilde{\gamma}_-,\widetilde{\gamma}_+)} C(u)\otimes_{\Z}\id_{\F}: C(\widetilde{\gamma}_-)\otimes_{\Z}\F \to C(\widetilde{\gamma}_+)\otimes_{\Z}\F
\]

The aim now is to construct an $\tilde{\omega}$-homotopy from $\Psi \Phi$ to the identity. I.e. we need to construct a $\Lambda$-linear map $h:CF(F:V;\F)\to CF(F:V;\F )$ such that 
\begin{equation}
\label{eq19}
P_-(\id - \Psi \Phi -h\partial -\partial h)P_+=0
\end{equation}
In order to construct $h$ we choose a function $\rho \in C^{\infty}([0,\infty)\times \R; [0,1])$, written $(\tau,s)\mapsto \rho_{\tau}(s)$, such that for every $\tau \in [0,\infty)$ the function $\rho_{\tau}\in C^{\infty}_c(\R ;[0,1])$ satisfies the monotonicity condition  
\begin{equation}
\label{eq23}
\frac{d\rho_{\tau}}{d s}(s)\left\{
\begin{array}{ll}
\geq 0, & \text{if}\ s\leq 0  \\
\leq 0, & \text{if}\ s\geq 0 
\end{array}
\right.
\end{equation}
Moreover, we require the condition that $[0,\infty)\ni \tau \mapsto \rho_{\tau}(0)$ is a monotone function onto $[0,1]$ as well as the condition that, for $\tau \geq 2$, we have 
\[
\rho_{\tau}(s)=\left\{
\begin{array}{ll}
\rho_+(s+\tau), & \text{if} \ s\leq 0 \\
\rho_-(s-\tau), & \text{if} \ s\geq 0.
\end{array}
\right.
\]
Consider for $\tau \in [0,\infty)$ the problem
\begin{equation}
\label{eq21}
\left\{
\begin{array}{l}
\partial_su +\tilde{I}^{\tau}_{(s,t)}(u)(\partial_tu-X_F(u)-\rho_{\tau}(s)X_{\tilde{H}_t}(u))=0 \\
u(\R \times \{0,1\})\subset V,
\end{array}
\right.
\end{equation}
where $\{\tilde{I}^{\tau}_{z}\}_{(\tau,z)\in [0,\infty) \times Z}$ is a family of $\tilde{\omega}$-compatible almost complex structures satisfying (\ref{eq101}) (with $\pm$ replaced by $\tau$) for a family of $\omega$-compatible almost complex structures $\{I^{\tau}_{z}\}_{(\tau,z)\in [0,\infty) \times Z}$ on $M$. We require that  
\[
\tilde{I}^{\tau}_{(s,t)}=\tilde{J}'_{t} \quad \text{if $\tau =0$ and/or $t\in \{0,1\}$}.
\]
We also require the existence of a constant $0<C<\infty$ such that for all $\tau \geq C$
\[
\tilde{I}^{\tau}_{(s,t)}=\left\{
\begin{array}{ll}
\tilde{I}^+_{(s+\tau ,t)}, & \text{if} \ s\leq 0 \\
\tilde{I}^-_{(s-\tau ,t)}, & \text{if} \ s\geq 0,
\end{array}
\right.
\]
and for all $\tau \leq C$
\[
\tilde{I}^{\tau}_{(s,t)}=\tilde{J}'_t \quad \text{if $s$ is sufficiently large.}
\]
As above $\{ \tilde{I}^{\tau}_{z} \}_{(\tau,z)}$ can be chosen to be regular and satisfy
\begin{equation}
\label{eq26}
\Delta < A(\tilde{M},V,\tilde{I}^{\tau}_{z}) \quad \forall \ (\tau,z) \in [0,\infty)\times Z.
\end{equation}
Given $\tau \in [0,\infty)$ and $\widetilde{\gamma}_-,\widetilde{\gamma}_+ \in \Crit(\mathcal{A}_{F:V})$ we denote by $\mathcal{M}^{\rho_{\tau}}(\widetilde{\gamma}_-,\widetilde{\gamma}_+)$ the space of all finite energy solutions $u\in C^{\infty}(Z,\tilde{M})$ to (\ref{eq21}) satisfying $\lim_{\tau \to -\infty}u_s=\widetilde{\gamma}_-$ and $\lim_{\tau \to \infty}u_s=\widetilde{\gamma}_+$ in $\Omega_V$ and we define
\[
\mathcal{M}^{\rho}(\widetilde{\gamma}_-,\widetilde{\gamma}_+):=\{ (\tau,u)\ :\ \tau \in [0,\infty), \ u\in \mathcal{M}^{\rho_{\tau}}(\widetilde{\gamma}_-,\widetilde{\gamma}_+) \}.
\]
Since $\{ \tilde{I}^{\tau}_{(s,t)} \}_{(\tau,s,t)}$ is regular, $\mathcal{M}^{\rho}(\widetilde{\gamma}_-,\widetilde{\gamma}_+)$ is a smooth manifold for every $\widetilde{\gamma}_-,\widetilde{\gamma}_+\in \Crit(\mathcal{A}_{F:V})$ and integration by parts yields 
\begin{equation}
\label{eq22}
E_{\tilde{I}^{\tau}}(u)=l(\widetilde{\gamma}_-,\widetilde{\gamma}_+)+\int_{-\infty}^{\infty}\int_{0}^{1}\frac{d\rho_{\tau}}{ds}(s)\tilde{H}_t(u)\ dtds \quad \forall \ u\in \mathcal{M}^{\rho_{\tau}}(\widetilde{\gamma}_-,\widetilde{\gamma}_+).
\end{equation}
In particular we see that if $l(\widetilde{\gamma}_-,\widetilde{\gamma}_+)\leq \delta$ then 
\begin{align*}
	E_{\tilde{I}^{\tau}}(u)&\leq \delta + \int_{-\infty}^{0}\int_{0}^{1}\frac{d\rho_{\tau}}{ds}(s)\tilde{H}_t(u)\ dtds + \int_{0}^{\infty}\int_{0}^{1}\frac{d\rho_{\tau}}{ds}(s)\tilde{H}_t(u)\ dtds \\
	&\leq \delta + \rho_{\tau}(0)b_+ -\rho_{\tau}(0)b_- =\delta +\rho_{\tau}(0)|\! |\tilde{H}|\! | \leq \Delta \quad \forall \ u\in \mathcal{M}^{\rho_{\tau}}(\widehat{\gamma}_-,\widehat{\gamma}_+)
\end{align*}
so, in this case, (\ref{eq26}) implies that no bubbleing occurs along $\mathcal{M}^{\rho}(\widetilde{\gamma}_-,\widetilde{\gamma}_+ )$. If in addition $\dim \mathcal{M}^{\rho}(\widetilde{\gamma}_-,\widetilde{\gamma}_+ )=0$ then $\#\mathcal{M}^{\rho}(\widetilde{\gamma}_-,\widetilde{\gamma}_+ )<\infty$, so we can define $h:CF(F:V)\to CF(F:V)$ as the unique $\Lambda$-linear map whose $(\widetilde{\gamma}_-,\widetilde{\gamma}_+)$'th matrix element equals 0 if $\dim \mathcal{M}^{\rho}(\widetilde{\gamma}_-,\widetilde{\gamma}_+ )\neq 0$ or $l(\widehat{\gamma}_-,\widehat{\gamma}_+)> \delta$ and otherwise equals
\[
\sum_{(\tau, u)\in \mathcal{M}^{\rho}(\widetilde{\gamma}_-,\widetilde{\gamma}_+)} C(u)\otimes_{\Z} \id_{\F}:C(\widetilde{\gamma}_-)\otimes_{\Z}\F\to C(\widetilde{\gamma}_+)\otimes_{\Z}\F.
\]
To finish the proof we need to check that (\ref{eq19}) is satisfied. To do that we fix $\widetilde{\gamma}_-=(q_-,\widehat{q}_-), \widetilde{\gamma}_+=(q_+,\widehat{q}_+) \in \Crit(\mathcal{A}_{F:V})$ with $\tilde{\omega}(\widehat{q}_-)\geq 0$ and $\tilde{\omega}(\widehat{q}_+)\leq 0$ and we need to check that the $(\widetilde{\gamma}_-,\widetilde{\gamma}_+)$'th matrix element of the operator 
\begin{equation}
\label{eq46}
\id - \Psi \Phi - h\partial -\partial h
\end{equation}
equals 0. This is clearly the case if the Conley-Zehnder indices of $\widetilde{\gamma}_-$ and $\widetilde{\gamma}_-$ differ, so we only consider the case when these coincide, in which case $\dim \mathcal{M}^{\rho}(\widetilde{\gamma}_-,\widetilde{\gamma}_+)=1$. Since
\[
l(\widetilde{\gamma}_-,\widetilde{\gamma}_+)=f(q_-)-\tilde{\omega}(\widehat{q}_-)-f(q_+)+\tilde{\omega}(\widehat{q}_+)\leq f(q_-)-f(q_+) \leq \delta, 
\]
no bubbling occurs along $\mathcal{M}^{\rho}(\widetilde{\gamma}_-,\widetilde{\gamma}_+)$, so it is compact up to Floer breaking. By the usual gluing argument every configuration counted in the $(\widetilde{\gamma}_-,\widetilde{\gamma}_+)$'th matrix element of (\ref{eq46}) occurs as a boundary point of the compactification of $\mathcal{M}^{\rho}(\widetilde{\gamma}_-,\widetilde{\gamma}_+)$ and it therefore follows as in \cite[Section 3.8.2]{Zapolsky15} that the $(\widetilde{\gamma}_-,\widetilde{\gamma}_+)$'th matrix element of (\ref{eq46}) equals 0 if only we argue that every boundary point of the compactification of $\mathcal{M}^{\rho}(\widetilde{\gamma}_-,\widetilde{\gamma}_+)$ occurs in (\ref{eq46}). This follows from the estimates on the last page of \cite{Chekanov98}.  
\end{proof}

\bibliographystyle{plain}
\bibliography{BIB}

\begin{thebibliography}{10}

\bibitem{Arnold80}
V.~I. Arnol'd.
\newblock Lagrange and {L}egendre cobordisms. {I}.
\newblock {\em Funktsional. Anal. i Prilozhen.}, 14(3):1--13, 96, 1980.

\bibitem{ArnoldGivental01}
V.~I. Arnol'd and A.~B. Givental'.
\newblock Symplectic geometry.
\newblock In {\em Dynamical systems, {IV}}, volume~4 of {\em Encyclopaedia
  Math. Sci.}, pages 1--138. Springer, Berlin, 2001.

\bibitem{BiranCieliebak02}
P.~Biran and K.~Cieliebak.
\newblock Lagrangian embeddings into subcritical stein manifolds.
\newblock {\em Israel J. Math.}, 127:221--244, 2002.

\bibitem{BiranCornea07}
P.~{Biran} and O.~{Cornea}.
\newblock {Quantum structures for {L}agrangian submanifolds}.
\newblock {\em ArXiv e-prints}, August 2007.

\bibitem{BiranCornea13}
P.~Biran and O.~Cornea.
\newblock Lagrangian cobordism. {I}.
\newblock {\em J. Amer. Math. Soc.}, 26(2):295--340, 2013.

\bibitem{BiranCornea14}
P.~Biran and O.~Cornea.
\newblock Lagrangian cobordism and {F}ukaya categories.
\newblock {\em Geometric and Functional Analysis}, 24(6):1731--1830, 2014.

\bibitem{BiranMembrez16}
P.~Biran and C.~Membrez.
\newblock The {L}agrangian cubic equation.
\newblock {\em Int. Math. Res. Not. IMRN}, (9):2569--2631, 2016.

\bibitem{Bisgaard16}
M.~R. Bisgaard.
\newblock {Invariants of Lagrangian cobordisms via spectral numbers}.
\newblock {\em Journal of Topology and Analysis}, to appear.
\newblock Preprint available at \url{https://arxiv.org/abs/1605.06144}, May
  2016.

\bibitem{Chekanov96}
Y.~V. Chekanov.
\newblock Lagrangian tori in a symplectic vector space and global
  symplectomorphisms.
\newblock {\em Math. Z.}, 223(4):547--559, 1996.

\bibitem{Chekanov97}
Y.~V. Chekanov.
\newblock Lagrangian embeddings and {L}agrangian cobordism.
\newblock In {\em Topics in singularity theory}, volume 180 of {\em Amer. Math.
  Soc. Transl. Ser. 2}, pages 13--23. Amer. Math. Soc., Providence, RI, 1997.

\bibitem{Chekanov98}
Y.~V. Chekanov.
\newblock Lagrangian intersections, symplectic energy, and areas of holomorphic
  curves.
\newblock {\em Duke Math. J.}, 95(1):213--226, 1998.

\bibitem{CorneaShelukhin15}
O.~{Cornea} and E.~{Shelukhin}.
\newblock {Lagrangian cobordism and metric invariants}.
\newblock {\em Journal of Differential Geometry}, to appear.
\newblock Preprint available at \url{https://arxiv.org/abs/1511.08550},
  November 2015.

\bibitem{GromovEliashberg91}
Y.~Eliashberg and M.~Gromov.
\newblock Convex symplectic manifolds.
\newblock In {\em Several complex variables and complex geometry, {P}art 2
  ({S}anta {C}ruz, {CA}, 1989)}, volume~52 of {\em Proc. Sympos. Pure Math.},
  pages 135--162. Amer. Math. Soc., Providence, RI, 1991.

\bibitem{EliashbergPolterovich93}
Yakov Eliashberg and Leonid Polterovich.
\newblock The problem of {L}agrangian knots in four-manifolds.
\newblock In {\em Geometric topology ({A}thens, {GA}, 1993)}, volume~2 of {\em
  AMS/IP Stud. Adv. Math.}, pages 313--327. Amer. Math. Soc., Providence, RI,
  1997.

\bibitem{GuilleminPollack10}
V.~Guillemin and A.~Pollack.
\newblock {\em Differential topology}.
\newblock AMS Chelsea Publishing, Providence, RI, 2010.
\newblock Reprint of the 1974 original.

\bibitem{Hatcher02}
A.~Hatcher.
\newblock {\em Algebraic topology}.
\newblock Cambridge University Press, Cambridge, 2002.

\bibitem{Haug15}
L.~{Haug}.
\newblock {Lagrangian antisurgery}.
\newblock {\em ArXiv e-prints}, November 2015.

\bibitem{Hungerford80}
T.~W. Hungerford.
\newblock {\em Algebra}, volume~73 of {\em Graduate Texts in Mathematics}.
\newblock Springer-Verlag, New York-Berlin, 1980.
\newblock Reprint of the 1974 original.

\bibitem{Kazaryan952}
M.~\`E. Kazaryan.
\newblock Characteristic classes of {L}agrangian and {L}egendrian
  singularities.
\newblock {\em Uspekhi Mat. Nauk}, 50(4(304)):45--70, 1995.

\bibitem{Kazaryan951}
M.~\`E. Kazaryan.
\newblock Hidden singularities and {V}assiliev's homology complex of
  singularity classes.
\newblock {\em Mat. Sb.}, 186(12):119--128, 1995.

\bibitem{Lazzarini00}
L.~Lazzarini.
\newblock Existence of a somewhere injective pseudo-holomorphic disc.
\newblock {\em Geom. Funct. Anal.}, 10(4):829--862, 2000.

\bibitem{Milnor65(1)}
John Milnor.
\newblock {\em Lectures on the {$h$}-cobordism theorem}.
\newblock Notes by L. Siebenmann and J. Sondow. Princeton University Press,
  Princeton, N.J., 1965.

\bibitem{Oh96}
Y.-G. Oh.
\newblock Floer cohomology, spectral sequences, and the {M}aslov class of
  {L}agrangian embeddings.
\newblock {\em Internat. Math. Res. Notices}, (7):305--346, 1996.

\bibitem{Oh97}
Y.-G. Oh.
\newblock Gromov-{F}loer theory and disjunction energy of compact {L}agrangian
  embeddings.
\newblock {\em Math. Res. Lett.}, 4(6):895--905, 1997.

\bibitem{Oh152}
Y.-G. Oh.
\newblock {\em Symplectic topology and {F}loer homology. {V}ol. 2}, volume~29
  of {\em New Mathematical Monographs}.
\newblock Cambridge University Press, Cambridge, 2015.
\newblock Floer homology and its applications.

\bibitem{Polterovich91}
L.~Polterovich.
\newblock The surgery of {L}agrange submanifolds.
\newblock {\em Geom. Funct. Anal.}, 1(2):198--210, 1991.

\bibitem{Polterovich01}
L.~Polterovich.
\newblock {\em The geometry of the group of symplectic diffeomorphisms}.
\newblock Lectures in Mathematics ETH Z\"urich. Birkh\"auser Verlag, Basel,
  2001.

\bibitem{Seidel99}
P.~Seidel.
\newblock Lagrangian two-spheres can be symplectically knotted.
\newblock {\em J. Differential Geom.}, 52(1):145--171, 1999.

\bibitem{Seidel08}
P.~Seidel.
\newblock {\em {Fukaya categories and Picard-Lefschetz theory}}.
\newblock Zurich Lectures in Advanced Mathematics. European Mathematical
  Society (EMS), Z\"urich, 2008.

\bibitem{Smale62}
S.~Smale.
\newblock On the structure of manifolds.
\newblock {\em Amer. J. Math.}, 84:387--399, 1962.

\bibitem{Suarez14}
L.~S. {Su{\'a}rez}.
\newblock {Exact Lagrangian cobordism and pseudo-isotopy}.
\newblock {\em International Journal of Mathematics}, to appear.
\newblock Preprint available at \url{https://arxiv.org/abs/1412.0697}, December
  2014.

\bibitem{Tanaka14}
H.~L. {Tanaka}.
\newblock {In simply-connected cotangent bundles, exact Lagrangian cobordisms
  are h-cobordisms}.
\newblock {\em ArXiv e-prints}, April 2014.

\bibitem{Vassilyev81}
V.~A. Vasil'ev.
\newblock Characteristic classes of {L}agrange and {L}egendre manifolds, that
  are dual to singularities of caustics and wave fronts.
\newblock {\em Funktsional. Anal. i Prilozhen.}, 15(3):10--22, 1981.

\bibitem{Vassilyev88}
V.~A. Vassilyev.
\newblock {\em Lagrange and {L}egendre characteristic classes}, volume~3 of
  {\em Advanced Studies in Contemporary Mathematics}.
\newblock Gordon and Breach Science Publishers, New York, 1988.
\newblock Translated from the Russian.

\bibitem{Zapolsky15}
F.~{Zapolsky}.
\newblock {The Lagrangian Floer-quantum-PSS package and canonical orientations
  in Floer theory}.
\newblock {\em ArXiv e-prints}, July 2015.

\end{thebibliography}
\end{document}